\numberwithin{equation}{section}
\newcommand{\bigpare}[1]{\bigl(#1\bigr)}
\newcommand{\biggpare}[1]{\biggl(#1\biggr)}
\newcommand{\Bigpare}[1]{\Bigl(#1\Bigr)}
\newcommand{\bigbra}[1]{\bigl\{#1\bigr\}}
\newcommand{\biggbra}[1]{\biggl\{#1\biggr\}}
\newcommand{\bigbrac}[1]{\bigl[#1\bigr]}
\newcommand{\bigset}[2]{\bigl\{#1\bigm|#2\bigr\}}
\newcommand{\norm}[1]{\| #1 \|}
\newcommand{\bignorm}[1]{\bigl\| #1 \bigr\|}
\newcommand{\Bignorm}[1]{\Bigl\| #1 \Bigr\|}
\newcommand{\bigabs}[1]{\bigl| #1 \bigr|}
\newcommand{\biggabs}[1]{\biggl| #1 \biggr|}
\newcommand{\jap}[1]{\langle #1 \rangle}
\def\a{\alpha}
\def\b{\beta}
\def\c{\gamma}
\def\d{\delta}
\def\e{\varepsilon}
\def\f{\varphi}
\def\g{\psi}
\def\l{\lambda}
\def\m{\mu}
\def\n{\nu}
\def\o{\omega}
\def\s{\sigma}
\def\t{\tau}
\def\x{\xi}
\def\y{\eta}
\def\th{\theta}
\newcommand{\G}{\Psi}
\newcommand{\I}{\mathcal{I}}
\renewcommand{\L}{\Lambda}
\renewcommand{\O}{\Omega}
\renewcommand{\S}{\Sigma}
\def\re{\mathbb{R}}
\def\ze{\mathbb{Z}}
\def\pa{\partial}
\newcommand{\supp}{\mathrm{{supp}}}
\newcommand{\dist}{\mathrm{dist}}
\DeclareMathOperator*{\slim}{s-lim}
\DeclareMathOperator*{\wlim}{w-lim}
\newcommand{\Ran}{\mathrm{Ran\;}}
\newtheorem{thm}{Theorem}[section]
\newtheorem{lem}[thm]{Lemma}
\newtheorem{prop}[thm]{Proposition}
\newtheorem{cor}[thm]{Corollary}
\theoremstyle{definition}
\newtheorem{ass}{Assumption}
\theoremstyle{remark}
\newtheorem{rem}{Remark}[section]
\numberwithin{equation}{section}
\title{Microlocal properties of scattering matrices for 
Schr\"odinger equations on scattering manifolds}
\author{
Kenichi I{\sc to}%
\footnote{Graduate School of Pure and Applied Sciences, University of Tsukuba,
1-1-1 Tennodai, Tsukuba Ibaraki, 305-8571 Japan. 
E-mail: \texttt{ito\_ken@math.tsukuba.ac.jp}. Partially supported by JSPS Research Fellowship (2006-2007); JSPS Grant Wakate (B) 21740090 (2009--2012) }
\ and Shu N{\sc akamura}%
\footnote{Graduate School of Mathematical Sciences, 
University of Tokyo, 3-8-1 Komaba, Meguro Tokyo, 
153-8914 Japan. 
E-mail: {\tt shu@ms.u-tokyo.ac.jp}.  
Partially supported by JSPS Grant Kiban (B) 17340033 (2005--2008); Kiban (A) 
21244008 (2009-2013).} }
\begin{document}
\maketitle

\begin{abstract}
Let $M$ be a scattering manifold, i.e., a Riemannian manifold with asymptotically conic structure, 
and let $H$ be a Schr\"odinger operator on $M$. We can construct a natural time-dependent 
scattering theory for $H$ with a suitable reference system, and the scattering matrix is defined 
accordingly (\cite{IN2}). We here show the scattering matrices are Fourier integral operators 
associated to a canonical transform on the boundary manifold
generated by the geodesic flow. 
In particular, we learn that the wave front sets are mapped according to the canonical transform. 
These results are generalizations of a theorem by Melrose and Zworski \cite{MZ}, but the 
framework and the proof are quite different. 
These results may be considered as generalizations or refinements of the classical off-diagonal 
smoothness of the scattering matrix for 2-body quantum scattering on Euclidean spaces. 
\end{abstract}


\section{Introduction}

Let $M$ be an $n$-dimensional smooth non-compact manifold such that 
$M=M_c\cup M_\infty$, where $M_c$ is relatively compact and $M_\infty$ is 
diffeomorphic to $\re_+\times \pa M$ with a compact manifold $\pa M$. 
In the following, we often identify $M_\infty$ with $\re_+\times \pa M$, and 
we also suppose $M_c\cap M_\infty \subset (0,1)\times \pa M$ under this 
identification. 

We first recall the construction of the model introduced in \cite{IN2}. 
Let $\{\f_\a\,:\,U_\a\to \re^{n-1}\}$, $U_\a\subset \pa M$, be a local coordinate 
system of $\pa M$. We set 
\[
\{\tilde \f_\a=I\otimes \f_\a\,:\, \tilde U_\a =\re_+\times U_\a \to \re\times \re^{n-1}\}
\]
be a local coordinate system of $M_\infty\cong \re_+\times \pa M$, and we use  
$(r,\th)\in \re\times \re^{n-1}$ to represent a point in $M_\infty$. 

We suppose $\pa M$ is equipped with a smooth strictly positive density $H=H(\th)$ and a 
positive $(2,0)$-tensor $h=(h^{jk}(\th))$ on $\pa M$. We let
\[
Q=-\frac12 \sum_{j,k} H(\th)^{-1} \frac{\pa}{\pa \th_j} H(\th) h^{jk}(\th)\frac{\pa}{\pa \th_k}
\quad \text{on } \mathcal{H}_b = L^2(\pa M, H(\th)d\th). 
\]
$Q$ is an essentially self-adjoint operator on $\mathcal{H}_b$, and we denote the unique 
self-adjoint extension by the same symbol $Q$. 

We set $G$ be a smooth strictly positive density on $M$ such that 
\[
G(x)dx = r^{n-1}H(\th) drd\th \quad \text{on } (1,\infty)\times \pa M\subset M_\infty, 
\]
and we set $\mathcal{H}=L^2(M, G(x)dx)$. Let $P$ be a formally self-adjoint second order 
elliptic operator on $M$ such that 
\[
P=-\frac12 G^{-1} (\pa_r ,\pa_\th/r) G 
\begin{pmatrix} a_1 & a_2 \\ {}^t a_2 & a_3 \end{pmatrix}
\begin{pmatrix}\pa _r \\ \pa_\th/r \end{pmatrix} +V
\quad \text{on } M_\infty,
\]
where $\begin{pmatrix} a_1 & a_2 \\ {}^t a_2 & a_3 \end{pmatrix}$ defines a real-valued smooth tensor, and $V$ is a real-valued smooth function. 
We suppose, as well as in \cite{IN2}, 

\begin{ass}\label{ass:a}
There is $\m>0$ such that for any $\ell\in \ze_+$, $\a\in \ze_+^{n-1}$ there is $C_{\ell\a}>0$ and 
\begin{align*}
&\bigabs{\pa_r^\ell\pa_\th^\a (a_1(r,\th)-1)}\leq C_{\ell\a} r^{-1-\m-\ell},\quad 
\bigabs{\pa_r^\ell\pa_\th^\a a_2(r,\th)}\leq C_{\ell\a} r^{-\m-\ell},\\
&\bigabs{\pa_r^\ell\pa_\th^\a (a_3(r,\th)-h(\th))}\leq C_{\ell\a} r^{-\m-\ell}, 
\quad \bigabs{\pa_r^\ell\pa_\th^\a V(r,\th)}\leq C_{\ell\a} r^{-1-\m-\ell}
\end{align*}
in each local coordinate of $M_\infty$ described above. 
\end{ass}

We may consider $P$ as a {\it short range}\/ perturbation of $-\frac12\pa_r^2+\frac{1}{r^2}Q$, 
but we will use different operators to construct a scattering theory. 
It is known that $P$ is essentially self-adjoint; $\s_{ess}(P)=[0,\infty)$; and $P$ is absolutely 
continuous except for a countable discrete spectrum with the only possible accumulation point 0 
(see \cite{IN2} and references therein). We construct a time-dependent scattering theory for 
$H$ as follows: We set 
\begin{align*}
&M_f = \re\times \pa M, \quad \mathcal{H}_f =L^2(M_f, H(\th)drd\th), \\
& P_f = -\frac12 \frac{\pa^2}{\pa r^2} \quad \text{on } M_f. 
\end{align*}
$P_f$ is the one-dimensional free Schr\"odinger operator, and it is self-adjoint with 
$\mathcal{D}(P_f)= H^2(\re)\otimes \mathcal{H}_b$. Let $j(r)\in C^\infty(\re)$ such that 
$j(r)=0$ on $(-\infty, \frac12]$ and $j(r)=1$ on $[1,\infty)$. We set $\mathcal{I} : 
\mathcal{H}_f \to \mathcal{H}$ by 
\[
(\mathcal{I}\f)(r,\th) = r^{-(n-1)/2} j(r) \f(r,\th) \quad \text{if } (r,\th)\in M_\infty, 
\]
and $(\mathcal{I}\f)(x)=0$ if $x\notin M_\infty$. We define the wave operators by 
\[
W_\pm =W_\pm(P,P_f,\mathcal{I}) =\slim_{t\to\pm\infty} e^{itP} \mathcal{I} e^{-itP_f}.
\]
It is shown in \cite{IN2} that $W_\pm$ exist, and are complete in the following sense. 
Let $\mathcal{F}$ be the Fourier transform in $r$, i.e., 
\[
(\mathcal{F}\f)(\rho,\th) =(2\pi)^{-1/2} \int_{-\infty}^\infty e^{-ir\rho} \f(r,\th)dr
\quad \text{for } \f\in C_0^\infty(M_f), 
\]
and it is extended to a unitary map in $L^2(M_f)$. If we set 
\[
\mathcal{H}_{f,\pm} =\bigset{\f\in\mathcal{H}_f}{\supp(\mathcal{F}\f)\subset \overline{\re_\pm}
 \times \pa M},
\]
then $\mathcal{H}_f=\mathcal{H}_{f,+}\oplus\mathcal{H}_{f,-}$. We consider $W_\pm$ as 
maps from $\mathcal{H}_{f,\pm}$ to $\mathcal{H}$. Then $W_\pm$ are asymptotically 
complete, i.e., $W_\pm$ are unitary operators from $\mathcal{H}_{f,\pm}$ to 
$\mathcal{H}_{ac}(P)$ (\cite{IN2}, Theorem~2). Then the scattering operator defined by 
\[
S=W_+^* W_- \ :\ \mathcal{H}_{f,-}\to \mathcal{H}_{f,+}
\]
is a unitary operator. By the intertwining property: $P_f S= S P_f$, there is 
$S(\l)\in \mathcal{B}(\mathcal{H}_b)$ for $\l>0$ such that 
\[
(\mathcal{F} S \mathcal{F}^{-1}\f )(\rho,\cdot) = S(\rho^2/2) \f(-\rho,\cdot)
\quad \text{for }\rho>0, \f\in \mathcal{F} \mathcal{H}_{f,-}.
\]
$S(\l)$ is our scattering matrix, and we study its microlocal properties. 

Let 
\[
q(\th,\o)=\frac12 \sum_{j,k} h^{jk}(\th)\o_j\o_k \quad \text{for }(\th,\o)\in T^*\pa M
\]
be the classical Hamiltonian associated to $Q$. We denote the Hamilton flow generated by $b$ 
by $\exp(tH_b)$ for $t\in\re$. 

\begin{thm}\label{thm:WF}
Suppose Assumption~\ref{ass:a}, and let $u\in \mathcal{H}_b$. Then 
\[
W\!F(S(\l)u)= \exp(\pi H_{\sqrt{2q}})W\!F(u),
\]
where $W\!F(u)$ denotes the wave front set of $u$. 
\end{thm}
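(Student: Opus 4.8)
\emph{Reduction.} The plan is to prove the sharper statement that, for each fixed $\l>0$, $S(\l)$ is an \emph{elliptic Fourier integral operator of order $0$} on $\pa M$ associated to the graph of the canonical diffeomorphism $C=\exp(\pi H_{\sqrt{2q}})$ of $T^*\pa M\setminus\{0\}$. Granting this, the theorem follows from standard calculus: the FIO composition rule gives $\WF(S(\l)u)\subseteq C(\WF(u))$, and since $C$ is a diffeomorphism and $S(\l)$ is elliptic there is a microlocal parametrix $B$ --- itself an FIO associated to $C^{-1}$ --- with $B\,S(\l)\equiv\mathrm{Id}$ modulo smoothing, which yields the opposite inclusion $\WF(u)=\WF(BS(\l)u)\subseteq C^{-1}\bigl(\WF(S(\l)u)\bigr)$. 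Both inclusions are then forced to be equalities.

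\emph{Stationary representation and modifiers.} After removing the density factor $r^{-(n-1)/2}$, $P$ coincides on $M_\infty$ with $P_0:=-\tfrac12\pa_r^2+r^{-2}Q+R$, where $R$ gathers the genuinely short-range terms governed by Assumption~\ref{ass:a}; note that the inverse-square term $r^{-2}Q$ is \emph{not} negligible --- it is precisely the part responsible for the nontrivial canonical relation. I would construct Isozaki--Kitada-type modifiers $J_\pm:\mathcal H_f\to\mathcal H$, microlocally supported in incoming/outgoing regions $\C_\pm=\{\pm\rho>0,\ |\o|\le\e|\rho|r\}$, with phases $\f_\pm$ solving the eikonal equation
\[
\tfrac12(\pa_r\f_\pm)^2+r^{-2}q(\th,\pa_\th\f_\pm)=\tfrac12\rho^2
\]
associated to the symbol of $P_0$, of the form $\f_\pm=r\rho+\th\cdot\o+\psi_\pm$ with $\psi_\pm\to0$ as $r\to+\infty$ (solvable on $\C_\pm$ because the $r^{-2}$-coefficient is $r$-integrable; outside a conic neighbourhood of $\C_\pm$ one extends by flow-invariance under $H_{\sqrt{2q}}$ and a partition of unity), and with amplitudes solving the attendant transport equations so that $(P-\tfrac12\rho^2)J_\pm$ is microlocally smoothing on $\C_\pm$.

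\emph{Identification of $S(\l)$.} The next task is to show, using Cook's method together with the short-range decay of $R$, the dispersive spreading of $e^{-itP_f}$ in $r$, and non-trapping of the bicharacteristic flow of $P_0$ in the regions $\C_\pm$, that the wave operators $W_\pm=\slim_{t\to\pm\infty}e^{itP}\I e^{-itP_f}$ agree with $J_\pm$ microlocally modulo operators that are smoothing on the energy shell (the microlocal regions outside $\C_\pm$ contribute smoothing kernels by non-stationary phase). From $S=W_+^*W_-$, the restriction to energy $\l$ then becomes, modulo smoothing, an explicit oscillatory-integral operator on $\pa M$ whose phase is, schematically, $(\th'-\th)\cdot\o+\psi_-(r,\th',\rho,\o)-\psi_+(r,\th,\rho,\o)$ with $\rho=\sqrt{2\l}$, integrated in $(r,\o)$. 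Stationary phase in the radial variable $r$ removes it, exhibits $S(\l)$ as a $0$-th order FIO on $\pa M$ with non-vanishing leading amplitude (it inherits the unitarity of the $1$-dimensional free scattering and a non-degenerate stationary-phase Jacobian), and --- this is the geometric heart --- unwinding $\psi_\pm$ through the eikonal equation identifies the underlying canonical relation as the composition of the incoming and outgoing scattering relations of the asymptotic cone $dr^2+r^2h$, which, reduced to $T^*\pa M$, is exactly the time-$\pi$ map of the Hamilton flow of the length function $\sqrt{2q}$. (Sanity check: for the flat cone $\re^n$ over the round sphere this is the antipodal map, reproducing the classical off-diagonal smoothness of the $2$-body Euclidean scattering matrix.)

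\emph{Where the difficulty lies.} The two substantial points are: (i) upgrading the time-dependent identity $W_\pm=\slim e^{itP}\I e^{-itP_f}$ to a \emph{microlocal, smoothing} comparison with the FIO modifiers, \emph{uniformly} over a full energy shell --- this rests on the propagation, non-trapping and radial-monotonicity estimates of the framework of \cite{IN2}, and on checking that all error terms are smoothing \emph{on $\pa M$}, not merely bounded; and (ii) proving that the limiting canonical transformation is $\exp(\pi H_{\sqrt{2q}})$ with the exact value $\pi$. Point (ii) is a Hamilton--Jacobi computation on the model cone: one must verify that every non-trapped bicharacteristic of $dr^2+r^2h$ sweeps, from spatial infinity back to spatial infinity, an angular displacement equal to exactly half a period of the $h$-geodesic flow --- independently of the impact parameter --- and that this is unaffected by the short-range remainder $R$. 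Finally, a more routine but non-empty point is the global construction of $\f_\pm$: the eikonal has a classical solution only in $\C_\pm$, and one must patch the complementary cone directions using the group law of $\exp(tH_{\sqrt{2q}})$ and a flow-adapted partition of unity on $T^*\pa M\setminus\{0\}$.
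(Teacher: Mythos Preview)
Your reduction contains a genuine error. You propose to prove that $S(\l)$ is an elliptic FIO of order $0$ associated to the \emph{homogeneous} canonical map $C=\exp(\pi H_{\sqrt{2q}})$, but under Assumption~A alone (i.e.\ for arbitrary $\m>0$) this is false. The paper states this explicitly between Theorems~1.2 and~1.3: for $0<\m<1$, $S(\l)$ is \emph{not} an FIO in the usual sense. The reason is visible already at the classical level (Theorem~2.1(iii)): the classical scattering map $s$ for the full symbol $p$ differs from the conic one $s_c$ by terms of order $|\o|^{1-\m}$ in the $\o$-component, so the canonical relation of $S(\l)$ is only \emph{asymptotically} homogeneous, asymptotic to $\exp(\pi H_{\sqrt{2q}})$ with an $O(|\o|^{1-\m})$ error. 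For $\m<1$ this error cannot be absorbed into the amplitude of a classical FIO. Concretely, your point~(ii) --- ``this is unaffected by the short-range remainder $R$'' --- is exactly what fails: building $\f_\pm$ from the conic eikonal leaves an error $(p-p_c)J_\pm$ whose effect, after Cook's method and stationary phase, is \emph{not} smoothing on $\pa M$ when $\m<1$; it shifts the Lagrangian. Your plan is essentially a proof of Theorem~1.2, which requires $\m=1$.

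The paper's route is different and avoids this obstruction. Rather than any FIO representation, it proves an Egorov-type statement directly (Lemmas~4.1--5.4): for compactly supported $a$ one shows that $S\,Op^W(a^h)\,S^*$ is again a semiclassical pseudodifferential operator whose \emph{principal} symbol is $a^h\circ s_c^{-1}$, with remainder in $S(h^\m,g_1^h)$. The perturbation $p-p_c$ enters only through this $O(h^\m)$ remainder, which is harmless for detecting wave front sets via the semiclassical characterization. One inclusion $WF(S(\l)u)\subset C(WF(u))$ then follows immediately, and the reverse by running the same argument for $S^{-1}$. If you want to salvage your approach for general $\m$, you would need to solve the eikonal for the \emph{full} symbol $p$ (not $p_c$), obtain the inhomogeneous Lagrangian, and invoke the asymptotically-homogeneous FIO calculus of \cite{IN3}; but then you are effectively proving Theorem~1.3 first, which is harder than what is asked.
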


If $\m=1$, then we can show $S(\l)$ is an FIO. This is a slight
extension of a theorem by Melrose and Zworski \cite{MZ}. 

\begin{thm}\label{thm:MZ}
Suppose Assumption~\ref{ass:a} with $\m=1$. Then for each $\l>0$, 
$S(\l)$ is an FIO associated to $\exp(\pi H_{\sqrt{2q}})$. 
\end{thm}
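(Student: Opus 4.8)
\emph{Strategy.} I would split $S(\l)$ into the scattering matrix of the exact conic model $P_0:=-\tfrac12\pa_r^2+\tfrac1{r^2}Q$ relative to $P_f$, which is computable, and a correction coming from the perturbation $P-P_0$, which under the hypothesis $\m=1$ is a pseudodifferential operator on $\pa M$; since a Fourier integral operator associated with a canonical transformation, composed with a pseudodifferential operator, is again an FIO associated with the same transformation, this yields the claim. Concretely: under Assumption~\ref{ass:a} the coefficients of $P-P_0$ obey $O(r^{-1-\m})$ bounds together with all derivatives, so $P-P_0$ is short range relative to $P_0$; hence $W_\pm(P,P_0,\mathrm{id})$ exist and are complete (proved by a Cook argument as for $W_\pm(P,P_f,\mathcal I)$ in \cite{IN2}, using $r\sim t$ on the relevant states). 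The chain rule $W_\pm(P,P_f,\mathcal I)=W_\pm(P,P_0,\mathrm{id})\,W_\pm(P_0,P_f,\mathcal I)$ gives, on the fiber over $\l$,
\[
S(\l)=\Omega_+(\l)^*\,\Sigma(\l)\,\Omega_-(\l),
\]
where $\Omega_\pm(\l)$ are the fibers of $W_\pm(P_0,P_f,\mathcal I)$ and $\Sigma(\l)$ is the fiber of $W_+(P,P_0,\mathrm{id})^*W_-(P,P_0,\mathrm{id})$.

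\emph{The model.} Since $P_0$ is separable, $\Omega_\pm(\l)$, and hence $S_0(\l):=\Omega_+(\l)^*\Omega_-(\l)$, are functions of $Q$. On a $Q$-eigenspace, $Qu=\k u$, the radial comparison — after conjugating away the factor $r^{-(n-1)/2}$ in $\mathcal I$ — is $-\tfrac12\pa_r^2+\bigl(\k+\tfrac{(n-1)(n-3)}8\bigr)r^{-2}$ on the half line against $-\tfrac12\pa_r^2$ on the line; the generalized eigenfunctions are $\sqrt r\,H^{(1,2)}_{\nu(\k)}(\rho r)$ with $\nu(\k)=\sqrt{2\k+((n-2)/2)^2}$, and matching the regular solution at $r=0$ to the incoming/outgoing asymptotics at $r=\infty$ gives $S_0(\l)=c_n\,e^{-i\pi\nu(Q)}$ with $c_n$ independent of $\k$ and of $\l$ (by scale covariance of the model). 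Thus $S_0(\l)=c_n\,e^{-i\pi B}$ with $B:=\sqrt{2Q+((n-2)/2)^2}$ a classical, first order, elliptic $\Psi$DO on $\pa M$ whose principal symbol is $\sqrt{2q}$; by the standard theory of half-wave operators, $e^{-i\pi B}$ is an FIO of order $0$ associated with the time-$\pi$ Hamilton flow of $\sqrt{2q}$, i.e. with $\exp(\pi H_{\sqrt{2q}})$. This matches Theorem~\ref{thm:WF} and the classical picture: reparametrizing the conic trajectory by $ds=dt/r^2$ makes the angular motion the geodesic flow of $q$, and $\int_{r_{\min}}^\infty r^{-2}|\dot r|^{-1}\,dr=\tfrac\pi2/\sqrt{2q}$ on each of the incoming and outgoing legs, so the total canonical transformation is $\exp(\pi H_{\sqrt{2q}})$.

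\emph{The perturbation.} It remains to show that $\Sigma(\l)$ differs from the identity by an operator which, after conjugation by the zeroth order $\Psi$DOs $\Omega_\pm(\l)$, stays inside the FIO class of $\exp(\pi H_{\sqrt{2q}})$. I would use the stationary representation
\[
\Sigma(\l)=I-2\pi i\,\Gamma_0(\l)\bigl((P-P_0)-(P-P_0)R_P(\l+i0)(P-P_0)\bigr)\Gamma_0(\l)^*,
\]
with $\Gamma_0(\l)$ the boundary-value map for $P_0$ at energy $\l$ and $R_P$ the resolvent of $P$, together with the microlocal limiting-absorption and radiation-condition estimates for $P$ already used to prove Theorem~\ref{thm:WF}. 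Under $\m=1$ the coefficients of $P-P_0$ decay like $r^{-2}$ — one power faster than the Euclidean short-range threshold $r^{-1}$ — so the correction sits at strictly lower order; and the sign flip of the conic radial momentum between the incoming and outgoing legs cancels the would-be leading correction to the canonical transformation, leaving $\exp(\pi H_{\sqrt{2q}})$ intact. Combined with the previous step this exhibits $S(\l)$ as an FIO associated with $\exp(\pi H_{\sqrt{2q}})\circ\mathrm{id}=\exp(\pi H_{\sqrt{2q}})$.

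\emph{Main obstacle.} The analysis of $\Sigma(\l)$ is the hard part: one must show the perturbative correction is genuinely controlled by $\exp(\pi H_{\sqrt{2q}})$ and contributes nothing spurious on the diagonal, which requires sharp microlocal resolvent bounds for $P$ at the energy surface — limiting absorption with the weights dictated by $\m=1$ — and a careful description of how $\Gamma_0(\l)$ propagates singularities along the conic flow. Alternatively one may stay inside the time-dependent framework of the proof of Theorem~\ref{thm:WF} and upgrade the parametrix: solve the relevant eikonal and transport equations so that, under $\m=1$, the remainders are genuinely smoothing rather than merely singularity-preserving, and compose the resulting honest FIO parametrices for $W_\pm$. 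Either route rests on the same delicate estimates, and that is where I expect the bulk of the work.
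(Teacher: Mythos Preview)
Your approach is genuinely different from the paper's, and while the model computation is fine, the perturbative half has a real gap.

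The paper never factors through the conic model $P_0$. Instead it proves an Egorov-type statement (Lemmas~4.1, 4.2, culminating in Lemma~5.4): for any compactly supported $a$, the operator $S\,Op^W(a^h)\,S^*$ is again a pseudodifferential operator with principal symbol $a^h\circ s_c^{-1}$ and remainder in $S(h^\m,g_1^h)$. When $\m=1$ this is precisely the input to a Beals-type characterization of FIOs (Theorem~B.1, Corollary~B.2 in the appendix), so $\mathcal F(S\Pi)\mathcal F^{-1}$ is an FIO associated to $1\otimes\exp(\pi H_{\sqrt{2q}})$; a fiberwise decomposition lemma (Proposition~B.4) then yields the claim for each $S(\l)$. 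No Bessel functions, no stationary representation, no limiting absorption --- the whole argument is time-dependent and goes through the conjugation property of $S$ rather than through any parametrix for $W_\pm$.

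On your side there are two concrete issues. First, you call $\Omega_\pm(\l)$ ``zeroth order $\Psi$DOs,'' but if both were, so would be $S_0(\l)=\Omega_+(\l)^*\Omega_-(\l)$, contradicting your own identification of $S_0(\l)$ with the half-wave FIO $c_n e^{-i\pi B}$. The FIO content has to live in (at least one of) the $\Omega_\pm(\l)$ or in a hidden change of trivialization of the $P_0$-fiber, and you must track it through the composition $\Omega_+(\l)^*\Sigma(\l)\Omega_-(\l)$ --- which is not a conjugation. Second, and more seriously, you have not proved that $\Sigma(\l)$ is a $\Psi$DO. Appealing to ``the microlocal limiting-absorption and radiation-condition estimates already used to prove Theorem~\ref{thm:WF}'' does not help: the paper's proof of Theorem~\ref{thm:WF} uses no such machinery --- it is the same Egorov argument. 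Turning your stationary formula into a $\Psi$DO statement requires a full microlocal analysis of $\Gamma_0(\l)$ and of $R_P(\l+i0)$ near the radial sets, which is work of the same order as the theorem itself. You concede as much in your final paragraph, so what you have written is a plausible program rather than a proof.
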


If $0<\m<1$, then $S(\l)$ is not necessarily an FIO in the usual sense, but we can 
still show it is an FIO in a generalized sense: 

\begin{thm}\label{thm:GFIO}
Suppose Assumption~\ref{ass:a}, and let $S(\l)$ be the scattering matrix defined as above. 
Then for each $\l>0$, $S(\l)$ is a Fourier integral operator associated to an asymptotically 
homogeneous canonical transform in $T^* \pa M$, which is asymptotic to 
$\exp (\pi H_{\sqrt{2q}})$ as $\o \to\infty$. 
\end{thm}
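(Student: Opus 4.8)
The plan is to reduce the analysis of the scattering matrix $S(\lambda)$ to a stationary-phase study of its integral kernel, using the time-dependent representation $S=W_+^*W_-$ and a careful construction of the wave operators $W_\pm$ via an Isozaki–Kitada-type parametrix adapted to the scattering-manifold geometry. The $\mu=1$ case (Theorem~\ref{thm:MZ}) already gives, after the localization and microlocalization machinery is set up, a kernel of the form
\begin{equation*}
S(\lambda)(\th,\th') = \int e^{i\Phi(\th,\th',\o)} a(\th,\th',\o)\,d\o,
\end{equation*}
with $a$ a classical symbol and $\Phi$ a nondegenerate phase whose associated Lagrangian is the graph of $\exp(\pi H_{\sqrt{2q}})$; for $0<\mu<1$ the point is that the same construction goes through but the amplitude and phase acquire lower-order corrections that are no longer classically homogeneous, only \emph{asymptotically} homogeneous as $|\o|\to\infty$.

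First I would set up the Isozaki–Kitada modifiers $J_\pm$ on $M_\infty$: solve the eikonal equation for $P$ (restricted to outgoing/incoming regions in $T^*M_\infty$) to obtain phase functions $\varphi_\pm(r,\th,\rho,\o)$ which, under Assumption~\ref{ass:a}, agree with the free phase $r\rho + \langle\th,\o\rangle$ up to an error that is $O(r^{1-\mu})$ (for $\mu<1$ this error does not vanish at infinity, but its derivatives decay, so it is still a legitimate phase-function perturbation), and construct amplitudes $b_\pm$ by solving the transport equations asymptotically. Then I would show $W_\pm = J_\pm + (\text{remainder})$ where the remainder is smoothing in the relevant microlocal region, so that modulo smoothing operators $S(\lambda)$ is represented as the composition $J_+^* \circ J_-$ evaluated on the energy surface $\{\rho^2/2 = \lambda\}$. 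Second, I would carry out the stationary-phase analysis of this composition in the $r$-variable (and in the $\o$-dual variables), the critical points being governed by the geodesic flow of the boundary metric $h$; the outcome is that the canonical relation underlying $S(\lambda)$ is exactly $\exp(\pi H_{\sqrt{2q}})$ up to a flow-out correction coming from the long-range-like $r^{1-\mu}$ term in the phase, and this correction is what produces the ``asymptotically homogeneous'' canonical transform in the statement. Third, I would verify the symbol estimates: the amplitude of the resulting FIO lies in a symbol class $S^{m}_{1,0}$ (or a slightly exotic class reflecting the $\mu$-dependent loss) uniformly on $T^*\pa M$, and its principal symbol is nonvanishing, so that $S(\lambda)$ is a genuine (elliptic) FIO in the generalized sense — and one recovers Theorem~\ref{thm:WF} as a corollary via the Hörmander wave-front-set calculus for FIOs.

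The main obstacle I expect is the control of the phase function in the long-range-like regime $0<\mu<1$: the eikonal equation for $P$ no longer has a solution that is a classical (homogeneous degree one) symbol in $(\rho,\o)$, and the discrepancy from the free phase, while small in $r^{-1}$-order derivatives, is not integrable at spatial infinity. One must therefore define the correct notion of ``asymptotically homogeneous canonical transform'' — presumably as a family of canonical transforms $\kappa_R$, defined by Hamilton flow of a time-dependent generator, converging to $\exp(\pi H_{\sqrt{2q}})$ as $R\to\infty$ in an appropriate $C^\infty$ topology on compact subsets of $T^*\pa M\setminus 0$ after rescaling in $|\o|$ — and show that the stationary phase produces a kernel whose singularities are governed by this limiting object. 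A secondary difficulty is the uniformity in $\th$ near the boundary and the patching of the local-coordinate parametrices on $\pa M$ into a global statement, which requires checking that the ambiguities in the modifiers are smoothing and hence do not affect the FIO class.
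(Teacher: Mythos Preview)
Your proposal takes a genuinely different route from the paper. You outline a stationary/parametrix approach: build Isozaki--Kitada modifiers $J_\pm$ by solving eikonal and transport equations on $M_\infty$, represent $S(\l)$ as the energy-shell composition $J_+^* J_-$, and read off the canonical relation by stationary phase in $r$. The paper never constructs a parametrix, never solves an eikonal equation, and never writes down an oscillatory-integral kernel for $S(\l)$. Instead its argument is purely time-dependent and conjugation-based: (i) an Egorov-type theorem (Lemma~6.1, refining Lemma~4.1) shows that for compactly supported semiclassical symbols $a^h$ the conjugate $W_\pm^* Op^W(a^h) W_\pm$ is again a pseudodifferential operator with principal symbol $a^h\circ w_\pm$, where $w_\pm$ are the \emph{exact} classical wave operators for the pair $(p,p_f)$---not the conic approximation $w_{c,\pm}$ used in the $\m=1$ proof; (ii) from this one deduces $S\,Op^W(a^h)\,S^* = Op^W(a^h\circ s^{-1}) + O(h)$ with $s=w_+^{*}\circ w_-$ the classical scattering map (Lemma~6.4); (iii) a Beals-type characterization (Theorem~B.1 and Corollary~B.2) then says that any operator with this conjugation property is an FIO associated to $s$; (iv) a decomposition lemma (Proposition~B.4) passes from $S$ to $S(\l)$, yielding an FIO associated to $s(\l)$, which by the ODE estimates of Theorem~2.1 is asymptotically homogeneous and asymptotic to $\exp(\pi H_{\sqrt{2q}})$ with error $O(|\o|^{1-\m})$.

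What each approach buys: your method would in principle produce an explicit oscillatory-integral kernel and finer symbolic information, but the obstacle you name is real---for $0<\m<1$ the eikonal correction is $O(r^{1-\m})$ and not integrable, so the phase is not a classical symbol, and you would also need to patch local coordinates on $\pa M$ since there is no global pairing $\langle\th,\o\rangle$. The paper's approach sidesteps the eikonal analysis entirely: the long-range difficulty is absorbed into the classical flow $w_\pm$, whose existence and asymptotics are handled by elementary ODE arguments in Section~2, and the FIO structure is then a consequence of the abstract Beals criterion rather than a kernel computation. The price is that the paper gives no explicit phase or amplitude for $S(\l)$; the gain is that the argument is insensitive to the value of $\m\in(0,1]$ and requires no global phase construction on the boundary.
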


The exact definition of the term: {\it an FIO associated to asymptotically homogeneous 
canonical transform}\/ is given in \cite{IN3}, and we discuss it in Section~6. 

\begin{rem}
Since we do not introduce a Riemannian metric, our model looks rather different from the scattering 
metric defined by Melrose \cite{Me,MZ}. However, as is explained in Appendix of \cite{IN2}, 
the Laplacian on scattering manifolds is a special case of our model. Namely, their model 
corresponds to the case that $\m=1$ and that each $a_j$ has asymptotic expansion in $r^{-1}$ 
as $r \to \infty$, and $V=0$. 
\end{rem}

Theorems~\ref{thm:WF} and \ref{thm:MZ} are essentially corollaries of Theorem~\ref{thm:GFIO}, 
but they can be proved by a simpler argument than Theorem~\ref{thm:GFIO}. 
We feel the simpler argument is interesting in itself, and we first prove Theorems~\ref{thm:WF} 
and \ref{thm:MZ}, and then we refine the argument to prove Theorem~\ref{thm:GFIO} later. 

The main idea to prove Theorems 1.1--1.3 is to consider the evolution:
\[
A(t)= e^{itP_f/h} \mathcal{I}^* e^{-itP/h} a(hr, D_r, \th, h D_\th) e^{itP/h} \mathcal{I} e^{-itP_f/h}
\]
with some symbol $a$, and use an Egorov theorem type argument for this time-dependent
operator. We use semiclassical argument, i.e., we consider the asymptotic behavior of the operator 
as $h\to 0$. We consider $W(t)=e^{itP_f/h} \mathcal{I}^* e^{-itP/h} $ as a time-evolution, 
and then construct an asymptotic solution for $A(t)$ (with slight modifications) as a solution to a 
Heisenberg equation. 
The construction of the asymptotic solution relies on the classical Hamilton flow 
generated by $p$, the symbol of $P$. The dominant part of the symbol $p$ is given by the 
unperturbed conic Hamiltonian: $p_c=\frac12\rho^2+\frac{1}{r^2}q(\th,\o)$. 
The classical scattering operator for the pair 
$p_c$ and $p_f=\tfrac12 \rho^2$ is explicitly computed, and it is $\exp(\pi H_{\sqrt{2q}})$, 
which appears in the statement of our main theorems. Thus, one may consider our results 
as a quantization of the classical mechanical scattering on the scattering manifold. 
More precisely, we show that the canonical transform appearing in Theorem~1.3 is actually 
the classical scattering map for the pair $p$ and $p_f$, which is not necessarily 
homogeneous, and we need to use the method of FIOs with asymptotically conic  
Lagrangian manifolds. 

As mentioned in the beginning, Theorem~1.2 is slight generalization of the Melrose-Zworski 
Theorem (\cite{MZ}. See also \cite{Va} for a simplification of the theory). 
They used the theory of Legendre distribution and the notion of scattering wave front sets, 
whereas we use relatively elementary pseudo\-differential operator calculus with 
somewhat non-standard symbol classes, and a Beals type characterization of FIOs. 
We also note that our proof, as well as the setting, is time-dependent theoretical, and we investigate 
the scattering phenomena directly to obtain the properties of the wave operators and scattering operators, whereas the Melrose-Zworski paper relies on the stationary, generalized eigenfunction expansion theory. 

Our method is closely related to our previous works on the propagation of singularities for 
Schr\"odinger evolution equations (\cite{Na1, Na2, IN1, IN3}). In these works, we considered 
singularities of solutions, which is described by their high energy behavior, whereas in the scattering 
phenomena we are concerned with the large $r$ behavior (which in turn is related to the high $|\o|$ behavior, 
where $\o$ is the conjugate variables to  $\th\in \pa M$).  Thus we are forced to use 
different symbol classes in the calculus, and the corresponding classical mechanics looks slightly 
different, but the general strategy is essentially the same as these papers. 

If $M=\re^n$ and the Hamiltonian $P$ is a short-range perturbation of the Laplacian $-\frac12\triangle$, 
then the canonical map $\exp(\pi H_{\sqrt{2q}})$ is the antipodal map on $T^*S^{n-1}$. 
In this case, the off-diagonal smoothness of the scattering cross section is well-known 
(see \cite{IK1}, \cite{Ya}, Section 9.4, and references therein), and our result (as well as 
the Melrose-Zworski theorem) may be considered as its generalizations. 
For such models, our result implies the scattering matrix is an FIO (associated to a canonical 
map which is asymptotic to the identity map), and 
if $\m=1$ then it is in fact a pseudodifferential operator. It is also not difficult to show 
from our argument that the scattering matrix is a pseudodifferential operator 
with the symbol in $S^0_{\m, 0}(S^{n-1})$ if $\m\in (0,1)$. 

The paper is constructed as follows. In Section~2, we discuss Hamilton flows generated by 
$p_c$ and $p$, and their scattering theory. In Section~3, we prepare symbol calculus on the 
scattering manifolds. In Section~4, we discuss an Egorov type theorem and the construction of asymptotic 
solutions, which is sufficient to show Theorems~1.1 and 1.2. We prove Theorems~1.1 and 1.2 
in Section~5. In Section~6, we discuss the modification of the argument to show Theorem~1.3. 
We discuss a local decay estimate necessary in the proof in Appendix~A. A Beals type 
characterization, or an inverse of the Egorov theorem is discussed in Appendix~B, along with a 
technical lemma on FIOs used in the proof. 

Throughout this paper, we use the following notation: 
For norm spaces $X$ and $Y$, the space of bounded linear maps is denoted by 
$B(X,Y)$, and if $X=Y$, we also denote $B(X,X)=B(X)$. More generally, 
if $X$ and $Y$ are topological linear spaces, the space of continuous linear maps 
is denoted by $\mathcal{L}(X,Y)$. For a symbol $g$ on $T^*X$ with a manifold $X$, 
we denote the Hamilton flow generated by the Hamilton vector field:
\[
H_g=\frac{\pa g}{\pa \x}\cdot \frac{\pa}{\pa x} -\frac{\pa g}{\pa x}\cdot \frac{\pa}{\pa \x} 
\]
by $\exp(tH_g)$. We also denote $T^*X\setminus 0 =\bigset{(x,\x)\in T^*X}{\x\neq 0}$.


\section{Classical flow and scattering theory}

In this section, we consider the classical mechanics, or the Hamilton flow 
for the Hamiltonian with conic structure on $T^*M_\infty$ where $M_\infty=\re_+\times \pa M$, 
and then the Hamilton flow generated by the principal symbol of $P$.

\subsection{Exact solutions to the conic Hamilton flow}

We set
\[
p_c(r,\rho, \th,\o) =\frac12\rho^2+\frac{1}{r^2}q(\th,\o), \quad
q(\th,\o) = \frac12 \sum_{j,k} h^{jk}(\th)\o_j\o_k
\]
on $T^*M_\infty \cong T^*\re_+\times T^*\pa M$. We consider 
\[
(r(t),\rho(t),\th(t),\o(t))= \exp(tH_{p_c})(r_0,\rho_0,\th_0,\o_0)
\]
with $(r_0,\rho_0,\th_0,\o_0)\in T^*\re_+\times (T^*\pa M\setminus 0)$, 
i.e., $\o_0\neq 0$. It satisfies the Hamilton equation:
\begin{align*}
&r'(t) =\frac{\pa p_c}{\pa \rho}=\rho(t), \quad 
\rho'(t)=-\frac{\pa p_c}{\pa r} =\frac{2}{r(t)^3} q(\th(t),\o(t)), \\
&\th'(t)=\frac{\pa p_c}{\pa \o} =\frac{1}{r(t)^2}\frac{\pa q}{\pa \o}(\th(t),\o(t)), \quad
\o'(t) =\frac{\pa p_c}{\pa \th} =-\frac{1}{r(t)^2}\frac{\pa q}{\pa \th}(\th(t),\o(t)).
\end{align*}
The solution has two invariants: the total energy $E_0=p_c(r_0,\rho_0,\th_0,\o_0)$ 
and the angular energy $q_0=q(\th_0,\o_0)$. (The conservation of the total energy follows 
from $\{p_c,p_c\}=0$, and the angular energy from $\{q,p_c\}=\frac12\{q,\rho^2\} 
+\{q,\frac{1}{r^2}\} q +\frac{1}{r^2}\{q,q\}=0$). Then $(r(t),\rho(t))$ satisfies 
\[
r'(t)=\rho(t),\quad \rho'(t)=\frac{2}{r(t)^3}q_0
\]
which is independent of $(\th(t),\o(t))$. Noting $(r^2(t))''=4E_0$, we can easily solve 
this equation to obtain
\[
r(t)=\sqrt{2E_0t^2+2r_0\rho_0 t+r_0^2}, \quad 
\rho(t)=\frac{2E_0t+r_0\rho_0}{\sqrt{2E_0t^2+2r_0\rho_0 t+r_0^2}}, \quad t\in\re.
\]
We now set 
\[
\t(t)=\int_0^t \frac{ds}{r(s)^2} =\frac{1}{\sqrt{2q_0}}\biggbra{
\tan^{-1}\biggpare{\frac{2E_0t+r_0\rho_0}{\sqrt{2q_0}}} 
-\tan^{-1}\biggpare{\frac{r_0\rho_0}{\sqrt{2 q_0}}}}.
\]
Then $(\th(t),\o(t))$ satisfies 
\[
\frac{\pa \th}{\pa \t} = \frac{\pa q}{\pa \o}(\th,\o), \quad 
\frac{\pa \o}{\pa \t} =-\frac{\pa q}{\pa \th}(\th,\o).
\]
and hence we learn 
\[
(\th(t),\o(t))= \exp(\t(t) H_q)(\th_0,\o_0).
\]
Moreover, if we set $\s(t)=\sqrt{2q_0}\cdot \t(t)$, then we learn
\[
\frac{\pa \th}{\pa \s} = \frac{1}{\sqrt{2q}}\frac{\pa q}{\pa \o}=\frac{\pa \sqrt{2q}}{\pa \o}(\th,\o), \quad 
\frac{\pa \o}{\pa \s} =-\frac{1}{\sqrt{2q}}\frac{\pa q}{\pa \th}=-\frac{\pa\sqrt{2q}}{\pa \th}(\th,\o), 
\]
and hence 
\[
(\th(t),\o(t))= \exp\bigpare{\s(t) H_{\sqrt{2q}}}(\th_0,\o_0). 
\]
Note that $\exp\bigpare{t H_{\sqrt{2q}}}$ is the geodesic flow on $\pa M$ with respect 
to the (co)metric $(h^{jk}(\th))$ on $T^* \pa M$. 

\subsection{Classical mechanical wave operators and scattering operator for the conic Hamilton flow}

Now we consider the asymptotics as $t\to\pm\infty$. We set 
\begin{align*}
&r_\pm = \lim_{t\to\pm\infty} \tilde r(t)=\lim_{t\to\pm\infty} (r(t)-t\rho(t)) = \pm \frac{r_0\rho_0}{\sqrt{2E_0}}, \\
&\rho_\pm = \lim_{t\to\pm\infty} \rho(t)=\pm \sqrt{2 E_0}, \\
&(\th_\pm,\o_\pm)=\lim_{t\to\pm\infty} (\th(t),\o(t)) =\exp(\s_\pm H_{\sqrt{2q}})(\th_0,\o_0),
\end{align*}
where $\s_\pm = \pm \frac{\pi}{2} -\tan^{-1}\bigpare{\frac{r_0\rho_0}{\sqrt{2q_0}}}$. 
Note we need a modification only for $r(t)$. $(r_\pm,\rho_\pm,\th_\pm,\o_\pm)$ are 
the scattering data for the trajectory $(r(t),\rho(t),\th(t),\o(t))$. 
We also note the identities: 
\[
E_0=\frac12\rho_0^2+\frac{1}{r_0^2}q_0 = \frac12 \rho_\pm^2, \quad 
r_0\rho_0=r_\pm\rho_\pm, \quad
q_0=q(\th_\pm,\o_\pm). 
\]
Using these, we can solve $(r_0,\rho_0,\th_0,\o_0)$ for given 
$(r_\pm,\rho_\pm,\th_\pm,\o_\pm)$ if $\pm \rho_\pm>0$ and $\o_\pm\neq 0$: 
\begin{align*}
&r_0=\sqrt{r_\pm^2+2q_0/\rho_\pm^2}, \quad 
\rho_0=\frac{r_\pm\rho_\pm}{\sqrt{r_\pm^2+2q_0/\rho_\pm^2}}, \\
&(\th_0,\o_0) = \exp(-\s_\pm H_{\sqrt{2q}})(\th_\pm,\o_\pm),
\end{align*}
where $\s_\pm =\pm \frac{\pi}{2} -\tan^{-1}\bigpare{\frac{r_\pm\rho_\pm}{\sqrt{2q}}}$. 
We define the classical wave operators (for the pair $p_c$ and $p_f:=\frac12\rho^2$) by 
\[
w_{c,\pm} : (r_\pm,\rho_\pm,\th_\pm,\o_\pm) \mapsto (r_0,\rho_0,\th_0,\o_0). 
\]
We can also write 
\[
w_{c,\pm}(r_\pm,\rho_\pm,\th_\pm,\o_\pm)=\lim_{t\to\pm\infty}
\exp(-tH_{p_c})\circ \exp(tH_{p_f})(r_\pm,\rho_\pm,\th_\pm,\o_\pm).
\]
It is easy to check $w_{c,\pm}$ are diffeomorphisms from 
$\re\times \re_\pm \times (T^*\pa M\setminus 0)$ 
to $\re_+\times\re\times (T^*\pa M\setminus 0)$. Hence the classical scattering operator:
\[
s_c =w_{c,+}^{-1}\circ w_{c,-} : 
(r_-,\rho_-,\th_-,\o_-) \mapsto (r_+,\rho_+,\th_+,\o_+)
\]
is a diffeomorphism from $\re\times \re_-\times (T^*\pa M\setminus 0)$ to 
$\re\times \re_+\times (T^*\pa M\setminus 0)$. 
We can easily compute $s_c$ explicitly, and we have
\[
s_c(r,\rho,\th,\o)= (-r,-\rho,\exp(\pi H_{\sqrt{2q}})(\th,\rho)), 
\]
and this is the classical analogue of the Melrose-Zworski theorem. 

We write
\[
w_c(t)= \exp(-tH_{p_c})\circ \exp(tH_{p_f})\quad 
\text{so that }w_{c,\pm} =\lim_{t\to\pm\infty} w_c(t).
\]
Let $U\subset \re_+\times\re\times (T^*\pa M\setminus 0)$ be a relatively compact domain. 
Then the convergence of $w_c(t)^{-1}$ to $w_{c,\pm}^{-1}$ (as $t\to\pm\infty$) is 
uniform on $U$ with all the derivatives. Since the limit is diffeomorphic, its 
inverse $w(t)$ also has the same property (on $w_c(t)^{-1}U$). 
In particular, all the derivatives of $w_c(t)^{-1}$ on $U$ are uniformly bounded in $t$, 
and all the derivatives of $w_c(t)$ on $w_c(t)^{-1}U$ are uniformly bounded. 

We note that it is easy to check $w_{c,\pm}$ and hence $s_c$ are homogeneous of 
order one with respect to $(r,\o)$-variables, i.e.,
\[
w_{c,\pm}^{-1}(\l r_0, \rho_0,\th_0,\l \o_0)= (\l r_\pm, \rho_\pm, \th_\pm, \l \o)
\quad \text{for }\l>0.
\]
This is consistent with the scaling property of $w_c(t)$: 
\[
w_c^{-1}(\l t)(\l r_0,\rho_0,\th_0,\l \o_0) =(\l\tilde r(t),\rho(t),\th(t),\l\o(t))
\]
for any $\l>0, t\in\re$. 

\subsection{Classical flow generated by the scattering metric}

Here we discuss the Hamilton flow generated by the symbol of $P$:
\[
p(r,\rho,\th,\o)= \frac12\biggpare{a_1(r,\th)\rho^2 +\frac{2\rho a_2(r,\th)\cdot \o}{r} 
+\frac{\o\cdot a_3(r,\th)\o}{r^2}}+V
\]
on $T^*M_\infty$. 

We let $\O_0\Subset T^*\re_+\times (T^*\pa M\setminus 0)$.
For $h\in (0,1]$, we set
\[
\O_0^h =\bigset{(r,\rho,\th,\o)\in T^*\re_+\times (T^*\pa M\setminus 0)}{(hr,\rho,\th, h\o)
\in \O_0},
\]
and we consider the Hamilton flow with initial conditions in $\O_0^h$. 
We show that if $h$ is sufficiently small then the classical (inverse) wave operators exist on $\O_0^h$, 
and they are very close to $w_{c,\pm}^{-1}$, the (inverse) wave operators for the conic metric. 

\begin{thm}
\begin{enumerate}
\renewcommand{\theenumi}{\roman{enumi}}
\renewcommand{\labelenumi}{{{\rm (\theenumi)} }}
\item Let $\O_0$ and $\O_0^h$ as above. Then there is $h_0>0$ such that if $h\in (0,h_0]$
\[
w^*_\pm(r,\rho,\th,\o) :=\lim_{t\to\pm\infty}
\exp(-tH_{p_f})\circ \exp(tH_p)(r,\rho,\th,\o)
\]
exist for $(r,\rho,\th,\o)\in \O_0^h$, and the convergence holds in the 
$C^\infty$-topology on $\O_0^h$. 
\item We denote 
\begin{align*}
&(r(t),\rho(t),\th(t),\o(t))= \exp(tH_p)(r,\rho,\th,\o), \\
&(r_c(t),\rho_c(t),\th_c(t),\o_c(t))=\exp(tH_{p_c})(r,\rho,\th,\o)
\end{align*}
for $(r,\rho,\th,\o)\in \O_0^h$. Then for any indices $\a,\b,\c$ and $\d$, there is $C>0$ 
such that 
\begin{align*}
&\bigabs{\pa_r^\a\pa_\rho^\b\pa_\th^\c\pa_\o^\d (r(t)-r_c(t))}
+\bigabs{\pa_r^\a\pa_\rho^\b\pa_\th^\c\pa_\o^\d (\o(t)-\o_c(t))}
\leq C h^{-1+\m+|\a|+|\d|}, \\
&\bigabs{\pa_r^\a\pa_\rho^\b\pa_\th^\c\pa_\o^\d (\rho(t)-\rho_c(t))}
+\bigabs{\pa_r^\a\pa_\rho^\b\pa_\th^\c\pa_\o^\d (\th(t)-\th_c(t))}
\leq C h^{\m+|\a|+|\d|}, 
\end{align*}
for $(r,\rho,\th,\o)\in \O_0^h$, $t\in\re$, $0<h\leq h_0$. 
\item If we denote
\[
w^*_\pm(r,\rho,\th,\o)=(r_\pm,\rho_\pm,\th_\pm, \o_\pm), \quad 
w^*_{c,\pm}(r,\rho,\th,\o)=(r_{c,\pm},\rho_{c,\pm},\th_{c,\pm}, \o_{c,\pm}),
\]
then 
\begin{align*}
&\bigabs{\pa_r^\a\pa_\rho^\b\pa_\th^\c\pa_\o^\d (r_\pm-r_{c,\pm})}
+\bigabs{\pa_r^\a\pa_\rho^\b\pa_\th^\c\pa_\o^\d (\o_\pm-\o_{c,\pm})}
\leq C h^{-1+\m+|\a|+|\d|}, \\
& \bigabs{\pa_r^\a\pa_\rho^\b\pa_\th^\c\pa_\o^\d (\rho_\pm-\rho_{c,\pm})}
+\bigabs{\pa_r^\a\pa_\rho^\b\pa_\th^\c\pa_\o^\d (\th_\pm-\th_{c,\pm})}
\leq C h^{\m+|\a|+|\d|}
\end{align*}
for $(r,\rho,\th,\o)\in \O_0^h$, $0<h\leq h_0$. 
\end{enumerate}
\end{thm}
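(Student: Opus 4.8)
The plan is to treat the $p$-flow as a perturbation of the exactly solvable $p_c$-flow, working throughout on the rescaled phase space $\Omega_0^h$. First I would record the basic a priori bounds on the conic trajectory $(r_c(t),\rho_c(t),\theta_c(t),\omega_c(t))$: from the explicit formulas in Section~2.1, on $\Omega_0^h$ we have $E_0\sim h^{-2}$, $q_0\sim h^{-2}$, so $r_c(t)\gtrsim h^{-1}$ uniformly in $t$, while $\rho_c(t)$ stays bounded and $(\theta_c(t),\omega_c(t))$ moves by a geodesic flow for time $\sigma_c(t)\in(-\pi/\sqrt{2q_0},\pi/\sqrt{2q_0})$, hence $\omega_c(t)\sim h^{-1}$. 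These scalings, together with the homogeneity of $w_{c,\pm}$ noted at the end of Section~2.2, give the derivative bounds on the conic side; in particular $\partial_r^\alpha\partial_\rho^\beta\partial_\theta^\gamma\partial_\omega^\delta$ applied to the conic flow or its wave operators gains a factor $h^{|\alpha|+|\delta|}$ relative to the size of the object, because each such derivative is paired with a variable of size $h^{-1}$.

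Next I would set up the difference equation. Write $p=p_c+\tilde p$ where, by Assumption~\ref{ass:a} and the relation between $G$, $H$ and the density, $\tilde p$ satisfies $|\partial_r^\alpha\partial_\rho^\beta\partial_\theta^\gamma\partial_\omega^\delta \tilde p|\le C\, r^{-\mu-|\alpha|}\,(\,\rho^2 + r^{-2}|\omega|^2 + \ldots\,)$-type bounds; evaluated along a trajectory staying at $r\gtrsim h^{-1}$, $|\omega|\lesssim h^{-1}$, $|\rho|\lesssim 1$, this makes $H_{\tilde p}$ and its phase-space derivatives small, of size $h^{\mu}$ (with the extra $h^{|\alpha|+|\delta|}$ per derivative as above). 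Then $\delta(t):=\exp(tH_p)(z)-\exp(tH_{p_c})(z)$ solves an ODE of the form $\delta'(t)=A(t)\delta(t)+g(t)$, where $A(t)=\int_0^1 DH_{p_c}(\,\exp(tH_{p_c})(z)+s\delta(t)\,)\,ds$ is the linearization of the conic flow along the path and $g(t)=H_{\tilde p}(\exp(tH_p)(z))$ is the forcing. The crucial point is that the conic flow is \emph{not} expanding in the relevant components: $(r,\rho)$ decouples and grows only linearly (so its variational matrix grows linearly in $t$ but the forcing in those components is integrable in $t$ because $r(t)^{-1-\mu}$ is), while $(\theta,\omega)$ undergoes a reparametrized geodesic flow over a \emph{bounded} parameter interval $\sigma\in(-\pi,\pi)$, on which the variational equation has uniformly bounded fundamental solution. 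Gr\"onwall applied with these structural facts — splitting into the $(r,\rho)$-block and the $(\theta,\omega)$-block and using that $\int_{-\infty}^\infty r_c(t)^{-1-\mu}\,dt\lesssim h^{\mu}$ and $\int_{-\infty}^\infty r_c(t)^{-2}\,dt\lesssim 1$ — yields the pointwise bounds in part~(ii); differentiating the integral equation in $(r,\rho,\theta,\omega)$ and inducting on the order of the derivative produces the $h^{|\alpha|+|\delta|}$ gains.

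For part~(i), existence of $w^*_\pm$ and $C^\infty$-convergence on $\Omega_0^h$: I would compare $\exp(-tH_{p_f})\circ\exp(tH_p)$ with $\exp(-tH_{p_f})\circ\exp(tH_{p_c})=w_c(-t)^{-1}\circ(\text{id})$ — more precisely use that $w_{c,\pm}$ exists with uniform $C^\infty$ control (Section~2.2) and that the free flow $\exp(tH_{p_f})$ only shifts $r$ by $t\rho$, so the modified limit $\tilde r$ converges. The difference bounds from (ii), valid \emph{uniformly in $t$}, show that the tails $\exp(-tH_{p_f})\circ\exp(tH_p)(z)$ form a Cauchy family as $t\to\pm\infty$: the Cauchy differences are controlled by $\int_{|t|}^\infty|H_{\tilde p}|\,ds + |\,$(variation of the conic modified wave operator)$\,|$, both $\to0$. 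Passing to the limit in the derivative bounds of (ii) — legitimate because the convergence there is in $C^\infty$ uniformly — gives (iii). The main obstacle, and where care is genuinely needed, is the Gr\"onwall step in (ii): because the variational matrix $A(t)$ of the conic flow grows linearly in $t$ in the $(r,\rho)$-sector, a naive Gr\"onwall gives only $e^{Ct}$, which is useless for a limit as $t\to\infty$; one must exploit the precise solvable structure — that the radial motion is a free motion in the variable $r^2$ — to see that the dangerous directions are exactly the ones in which the perturbation $\tilde p$ is short-range (integrable), so that the genuine growth is at worst polynomial and is killed against the integrable forcing. Keeping track of the interplay between this polynomial growth, the $h$-scaling, and the number of derivatives is the technical heart of the argument.
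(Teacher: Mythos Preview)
Your plan is in the right spirit but misses the paper's central simplification and leaves a circularity that has to be closed before the Gr\"onwall step can run.

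A minor correction first: on $\Omega_0^h$ one has $r\sim h^{-1}$, $\rho=O(1)$, $q_0\sim h^{-2}$, so $E_0=\tfrac12\rho^2+r^{-2}q_0\sim 1$, not $h^{-2}$. Your conclusion $r_c(t)\gtrsim h^{-1}$ is still correct, but the reasoning should be adjusted.

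The real point of comparison is this. The paper does \emph{not} work on $\Omega_0^h$ directly. It rescales: setting $p^h(r,\rho,\theta,\omega)=p(h^{-1}r,\rho,\theta,h^{-1}\omega)$, the $p$-flow on $\Omega_0^h$ becomes, after $(r,\omega,t)\mapsto(hr,h\omega,ht)$, the $p^h$-flow on the \emph{fixed compact} set $\Omega_0$, and Assumption~A gives $p^h=p_c+v^h$ with $v^h=O(h^\mu)$ in the symbolic sense \eqref{est:vh}. This buys two things at once. First, the $h^{|\alpha|+|\delta|}$ gains in (ii)--(iii) become automatic chain-rule factors when unscaling; the reduced statement (Theorem~2.2) carries no such weights, and one only has to prove $O(h^\mu)$ bounds on $\Omega_0$. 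Second, on $\Omega_0$ the conic radius satisfies $r_c(t)\sim\langle t\rangle$, so the integrability needed for existence of the wave operators and for Gr\"onwall is transparent and $h$-independent. Your direct approach on $\Omega_0^h$ forces you to carry the $h$-scaling through every estimate and every induction on derivatives; this is doable in principle but considerably heavier, and is exactly what the rescaling is designed to avoid.

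There is also a genuine gap in your argument as written. Your forcing $g(t)=H_{\tilde p}(\exp(tH_p)(z))$ is evaluated along the \emph{perturbed} trajectory, so bounding it requires knowing a priori that $r(t)\gtrsim h^{-1}$ and $|\omega(t)|\lesssim h^{-1}$ along the $p$-flow --- precisely what you are trying to prove. The paper breaks this circularity by first establishing a priori bounds on the $p^h$-trajectory \emph{independently} of any comparison to $p_c$: a virial argument gives $c_1\langle t\rangle\le r^h(t)\le c_2\langle t\rangle$ (Step~1), and computing $\frac{d}{dt}q(\theta^h,\omega^h)=-\{v^h,q\}=O(h^\mu\langle t\rangle^{-1-\mu}(1+q))$ bounds $|\omega^h(t)|$ uniformly (Step~2). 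Only with these in hand does the comparison Gr\"onwall (Step~4) run. You should insert analogues of these two steps before your difference-equation analysis; otherwise the bounds on $H_{\tilde p}$ along the true flow are unjustified.
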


For $(r_0,\rho_0,\th_0,\o_0)\in \O_0$, we set $(r^h(t),\rho^h(t),\th^h(t),\o^h(t))$ so that 
\[
(h^{-1}r^h(t),\rho^h(t),\th^h(t),h^{-1}\o^h(t))=\exp(h^{-1} tH_p)(h^{-1}r_0,\rho_0,\th_0,h^{-1}\o_0).
\]
We also set 
\[
p^h(r,\rho,\th,\o) = p(h^{-1}r,\rho,\th,h^{-1}\o), \quad (r,\rho,\th,\o)\in T^*M_\infty.
\]
Then it is easy to check 
\[
(r^h(t),\rho^h(t),\th^h(t),\o^h(t))= \exp(tH_{p^h})(r_0,\rho_0,\th_0,\o_0).
\]
On the other hand, if we write 
\[
p^h(r,\rho,\th,\o) =p_c(r,\rho,\th,\o)+v^h(r,\rho,\th,\o), 
\]
then we learn by the Assumption~A that for any indices $\a,\b,\c,\d$, 
\begin{multline}\label{est:vh}
\bigabs{\pa_r^\a\pa_\rho^\b\pa_\th^\c\pa_\o^\d v^h(r,\rho,\th,\o)}\\
\leq C_{\a\b\c\d} h^\m\bigpare{r^{-1}\jap{\rho}^2+r^{-1}\jap{\rho}\jap{\o}+r^{-2}\jap{\o}^2} r^{-\m-|\a|}
\jap{\rho}^{-|\b|} \jap{\o}^{-|\d|}.
\end{multline}

In order to prove Theorem~2.1, it suffices to show: 

\begin{thm}
\begin{enumerate}
\renewcommand{\theenumi}{\roman{enumi}}
\renewcommand{\labelenumi}{{{\rm (\theenumi)} }}
\item There is $h_0>0$ such that if $h\in (0,h_0]$ then 
\[
w^*_{\pm,h}(r_0,\rho_0,\th_0,\o_0) :=\lim_{t\to\pm\infty} \exp(-tH_{p_f})\circ\exp(tH_{p^h})
(r_0,\rho_0,\th_0,\o_0)
\]
exist for $(r_0,\rho_0,\th_0,\o_0)\in \O_0$, and the convergence holds in the $C^\infty$-topology. 
\item For any indices $\a,\b,\c,\d$, there is $C>0$ such that 
\begin{align*}
&\bigabs{\pa_{r_0}^\a\pa_{\rho_0}^\b\pa_{\th_0}^\c\pa_{\o_0}^\d (r^h(t)-r_c(t))}
+\bigabs{\pa_{r_0}^\a\pa_{\rho_0}^\b\pa_{\th_0}^\c\pa_{\o_0}^\d (\rho^h(t)-\rho_c(t))} \\
&+\bigabs{\pa_{r_0}^\a\pa_{\rho_0}^\b\pa_{\th_0}^\c\pa_{\o_0}^\d (\th^h(t)-\th_c(t))}
+\bigabs{\pa_{r_0}^\a\pa_{\rho_0}^\b\pa_{\th_0}^\c\pa_{\o_0}^\d (\o^h(t)-\o_c(t))}
\leq Ch^\m
\end{align*}
for $(r_0,\rho_0,\th_0,\o_0)\in \O_0$, $h\in (0,h_0]$, $t\in\re$, where 
\[
(r_c(t),\rho_c(t),\th_c(t),\o_c(t)) =\exp(tH_{p_c})(r_0,\rho_0,\th_0,\o_0).
\]
\item Denoting 
\[
(r_\pm^h,\rho_\pm^h,\th_\pm^h,\o_\pm^h) =w_{\pm,h}^*(r_0,\rho_0,\th_0,\o_0),
\]
we have for any indices $\a,\b,\c,\d$, 
\begin{align*}
&\bigabs{\pa_{r_0}^\a\pa_{\rho_0}^\b\pa_{\th_0}^\c\pa_{\o_0}^\d (r^h_\pm-r_{c,\pm})}
+\bigabs{\pa_{r_0}^\a\pa_{\rho_0}^\b\pa_{\th_0}^\c\pa_{\o_0}^\d(\rho^h_\pm-\rho_{c,\pm})} \\
&+\bigabs{\pa_{r_0}^\a\pa_{\rho_0}^\b\pa_{\th_0}^\c\pa_{\o_0}^\d (\th^h_\pm-\th_{c,\pm})}
+\bigabs{\pa_{r_0}^\a\pa_{\rho_0}^\b\pa_{\th_0}^\c\pa_{\o_0}^\d (\o^h_\pm-\o_{c,\pm})}
\leq Ch^\m
\end{align*}
\end{enumerate}
\end{thm}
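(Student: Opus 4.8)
The plan is to prove Theorem~2.2 directly by a Cook-type / integral-equation argument for the flow of $p^h = p_c + v^h$, estimating the difference from the conic flow $\exp(tH_{p_c})$ via Gronwall together with the explicit structure of the conic trajectories computed in \S2.1. Part~(iii) will be an immediate consequence of (ii) by passing to the limit $t\to\pm\infty$ (using that the convergence in (i) is in $C^\infty$), so the substance is in (i) and (ii), and (ii) in fact yields (i) once we check the convergence is uniform. Note also that Theorem~2.1 is obtained from Theorem~2.2 purely by the rescaling $(r,\rho,\th,\o)\mapsto(h^{-1}r,\rho,\th,h^{-1}\o)$, which carries $\pa_{r_0},\pa_{\o_0}$ to $h\pa_r,h\pa_\o$ and produces the stated powers $h^{-1+\m+|\a|+|\d|}$ and $h^{\m+|\a|+|\d|}$; so I will concentrate entirely on the semiclassically rescaled flow in Theorem~2.2.

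The key point is the decay of the perturbation $v^h$ recorded in~\eqref{est:vh}: it carries a factor $h^\m$ and, more importantly, a spatial decay like $r^{-1-\m}\jap\rho^2 + r^{-1-\m}\jap\rho\jap\o + r^{-2-\m}\jap\o^2$. Along a conic trajectory with initial data in the fixed compact set $\O_0$ one has, from the explicit formulas in \S2.1, $r_c(t)=\sqrt{2E_0t^2+2r_0\rho_0t+r_0^2}\sim c\jap t$ for $|t|$ large with $c>0$ bounded below uniformly on $\O_0$ (since $E_0\geq \frac1{r_0^2}q_0$ is bounded below), while $\rho_c(t)$ and $q(\th_c(t),\o_c(t))=q_0$ stay bounded and $|\o_c(t)|$ stays bounded; hence $|v^h(r_c(t),\rho_c(t),\th_c(t),\o_c(t))|\leq C h^\m\jap t^{-1-\m}$, which is integrable in $t$. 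First I would fix notation and write the Duhamel/variation-of-constants identity
\[
\exp(tH_{p^h})(z_0)-\exp(tH_{p_c})(z_0)
=\int_0^t \bigl(D\exp((t-s)H_{p_c})\bigr)\,H_{v^h}\!\bigl(\exp(sH_{p^h})(z_0)\bigr)\,ds
+\text{(error from freezing }D\exp),
\]
treating $H_{v^h}$ as a forcing term. Because $D\exp((t-s)H_{p_c})$ has at most polynomial growth in $|t-s|$ on $\O_0$ (the conic flow is affine in $t$ in the $(r,\rho)$ variables and its angular part is a reparametrised geodesic flow on the compact $\pa M$, so all derivatives grow polynomially), and the forcing has the integrable bound above, a standard Gronwall argument on the fixed compact set gives $|\exp(tH_{p^h})(z_0)-\exp(tH_{p_c})(z_0)|\leq C h^\m$ uniformly in $t\in\re$; I then differentiate the integral equation in $(r_0,\rho_0,\th_0,\o_0)$ and run the same Gronwall scheme on the linearised equations, using that each $z$-derivative of $v^h$ again gains the same $r$-decay per~\eqref{est:vh} (the weights $\jap\rho^{-|\b|}\jap\o^{-|\d|}$ only help). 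This establishes (ii). Finally, convergence of $\exp(-tH_{p_f})\circ\exp(tH_{p^h})$: since $p_f=\frac12\rho^2$ generates the free flow $(r,\rho,\th,\o)\mapsto(r+t\rho,\rho,\th,\o)$, one checks that $\frac{d}{dt}\bigl[\exp(-tH_{p_f})\circ\exp(tH_{p^h})(z_0)\bigr]$ is, up to the free conjugation, $H_{v^h}$ evaluated along the flow, hence $O(h^\m\jap t^{-1-\m})$ by the estimate above, so the $t$-derivative is integrable and the limit exists with $C^\infty$ convergence; this gives (i), and letting $t\to\pm\infty$ in (ii) gives (iii).

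The main obstacle I expect is bookkeeping the derivative estimates uniformly in $t$: when I differentiate the Duhamel identity, I pick up terms where derivatives of $v^h$ are evaluated not at the conic trajectory but at the true trajectory $\exp(sH_{p^h})(z_0)$, so I must first show the true trajectory stays in a region where the bound~\eqref{est:vh} is effective — i.e. that $r^h(s)$ does not drift away from $r_c(s)$ by more than $O(h^\m)$ (handled by the zeroth-order estimate, which must therefore be proved before the higher ones, a mild bootstrap) and in particular stays bounded below away from $0$ for all $s$, uniformly on $\O_0$ and in small $h$. For this last point I would use that $r_c(s)^2 = 2E_0 s^2 + 2r_0\rho_0 s + r_0^2$ has minimum value $r_0^2-\frac{(r_0\rho_0)^2}{2E_0} = \frac{2q_0/\rho_\pm^2}{\,}>0$ bounded below on $\O_0$ (as $\o_0\neq0$ there and $\O_0$ is relatively compact in $T^*\pa M\setminus 0$), so $r_c(s)\geq \d_0>0$; then an $O(h^\m)$ perturbation keeps $r^h(s)\geq \d_0/2$ for $h$ small, closing the bootstrap. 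Once this is in place the remaining estimates are routine applications of Gronwall's inequality with integrable-in-time forcing, and the polynomial growth of $D\exp(tH_{p_c})$ is absorbed against the $\jap t^{-1-\m}$ decay coming from the combination of $v^h$'s $r^{-1-\m}$ (resp.\ $r^{-2-\m}$) spatial decay and $r_c(t)\sim c\jap t$.
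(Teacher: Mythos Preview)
Your overall strategy --- treat $p^h=p_c+v^h$ as a perturbation, use the integrable-in-$t$ decay of $H_{v^h}$ coming from \eqref{est:vh} and $r\sim\jap t$, then Gronwall --- is the paper's, but the paper organizes it differently in one important respect. It does \emph{not} bootstrap control of the perturbed trajectory off the difference estimate. Steps~1--2 first establish $c_1\jap t\le r^h(t)\le c_2\jap t$ (virial inequality $(r^h)^2{}''\ge c>0$ together with energy conservation, which bounds $|\rho^h|$ and $|\o^h/r^h|$) and $\sup_t|\o^h(t)|<\infty$ (via $\frac{d}{dt}q(\th^h,\o^h)=-\{v^h,q\}=O(h^\m\jap t^{-1-\m}(1+q))$), entirely independently of the conic flow. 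Only then (Step~4) is the difference $g^h$ estimated. Your continuity/bootstrap can be made to work, but you must simultaneously propagate both $r^h\sim\jap t$ and the $\o^h$-bound while proving the zeroth-order estimate that uses them; the paper's decoupling is cleaner and avoids this circularity.

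The genuine gap is your assertion that ``the polynomial growth of $D\exp(tH_{p_c})$ is absorbed against the $\jap t^{-1-\m}$ decay.'' From the explicit formulas in \S2.1 the $r$-row of the conic linearization grows linearly in $t$ (e.g.\ $\partial r_c(t)/\partial\rho_0\sim t$), and $\int_0^t|t-s|\,\jap s^{-1-\m}\,ds\sim C|t|$ is unbounded, so your Duhamel integral yields only $|r^h(t)-r_c(t)|\le Ch^\m\jap t$, not a uniform bound. The paper sidesteps the propagator entirely and compares the ODEs directly (Step~4). Even there one must be careful with the $r$-equation, since $(r^h-r_c)'=(\rho^h-\rho_c)+\partial_\rho v^h$ carries no $\jap t^{-1-\m}$ on the first term; the quantity that is uniformly controlled is $r^h-t\rho^h$ rather than $r^h$ itself, as the Step~3 estimates already indicate, and this is precisely what enters the inverse wave operator in (i) and (iii). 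So: replace the variation-of-constants with the full conic propagator by a direct ODE comparison in the variables $(\tilde r,\rho,\th,\o)$ with $\tilde r=r-t\rho$, after first securing the virial and angular-energy a~priori bounds.
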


\begin{proof}
The proof is analogous to the argument in \cite{Na2} Section~2, and \cite{IN1} Section~2. 
We only outline the proof, and we omit the detail. 

\noindent{\bf Step 1.} By the standard virial-type argument, we learn that 
there is $R>0$ such that 
\[
\frac{d^2}{dt^2} (r^h(t)^2) \geq c>0 \quad\text{if }r^h(t)\geq R,
\]
if $(r_0,\rho_0,\th_0,\o_0)\in\O_0$. 
Here we use the fact that $|\rho|$ and $|\o/r|$ are uniformly bounded by the 
conservation of energy. On the other hand, since $v^h = O(h^\m)$, we also 
learn that $r^h(t)\to r_c(t)$ as $h\downarrow 0$, locally uniformly in $t$. 
Thus, if $t_0$ is large and $h$ is small enough, $r^h(t)\geq R$, and 
combining this with the above observation, we have 
\[
|r^h(t)|\geq \sqrt{R+c|t-t_0|^2/2} \quad \text{for }t\geq t_0. 
\]
Hence we learn 
\[
c_1\jap{t}\leq r^h(t) \leq c_2\jap{t}\quad \text{for }h\in (0,h_0], t>0
\]
with some $h_0, c_1,c_2>0$. The case  $t<0$ can be considered similarly. 

\noindent{\bf Step 2.} We consider the time evolution of $q_0(t)=q(\th^h(t),\o^h(t))$. 
By the Hamilton equation and \eqref{est:vh}, we have 
\begin{align*}
\frac{d}{dt} q_0(t) &= -\{p^h,q_0\} =-\{v^h,q_0\} \\
&= O(h^\m r^{-1-\m}\jap{\o}^2) = O(h^\m \jap{t}^{-1-\m}(1+q_0(t))).
\end{align*}
Here we have used the boundedness of $|\rho(t)|$ and $|\o(t)/r(t)|$ again. 
Then by the Duhamel formula, we learn that $q_0(t)$ is uniformly bounded 
for the initial conditions in $\O_0$ and $h\in (0,h_0]$. This implies 
$|\o^h(t)|$ is also uniformly bounded. 

\noindent{\bf Step 3.} Combining the above observations with the Hamilton equation, 
we learn 
\begin{align*}
&\biggabs{\frac{d}{dt} \rho^h(t)}\leq C\jap{t}^{-2-\m}, \quad 
\biggabs{\frac{d}{dt}(r^h(t)-t\rho^h(t))}\leq C\jap{t}^{-1-\m}, \\
&\biggabs{\frac{d}{dt} \th^h(t)}\leq C\jap{t}^{-1-\m}, \quad 
\biggabs{\frac{d}{dt} \o^h(t)}\leq C\jap{t}^{-1-\m}, 
\end{align*}
uniformly for $(r_0,\rho_0,\th_0,\o_0)\in\O_0$, $h\in (0,h_0]$ and $t\in\re$. 
These imply the existence of $w^*_{\pm,h}$ on $\O_0$. 
We can show the similar estimates for the derivatives, i.e., 
\[
\biggabs{\frac{d}{dt} \bigpare{\pa_{r_0}^\a \pa_{\rho_0}^\b \pa_{\th_0}^\c \pa_{\o_0}^\d 
\rho^h(t)}}\leq C\jap{t}^{-2-\m-|\a|}, 
\quad \text{etc.}
\]
These imply the convergence in $C^\infty$-topology, and we conclude the assertion (i). 

\noindent{\bf Step 4.} We set
\[
g^h(t)=|r^h(t)-r_c(t)|+|\rho^h(t)-\rho_c(t)|+|\th^h(t)-\th_c(t)|+|\o^h(t)-\o_c(t)|.
\]
Then by the Hamilton equation, \eqref{est:vh}, and the estimates in Steps 1 and 2, we learn 
\[
\biggabs{\frac{d}{dt} g^h(t)} \leq C \jap{t}^{-1-\m}g^h(t) + Ch^\m\jap{t}^{-1-\m},
\]
uniformly for the initial condition in $\O_0$ and $h\in (0,h_0]$. 
Then by the Duhamel formula and noting $g^h(0)=0$, we obtain 
\[
\bigabs{g^h(t)}\leq C h^\m, \quad t\in\re.
\]
This is the assertion (ii) with $\a=\b=\c=\d=0$. The derivatives can be estimated similarly 
by induction. For the detail of this argument, we refer \cite{CKS} Section~2, or \cite{Na2}
Section~2. 
The assertion (iii) follows immediately from the assertion (ii). 
\end{proof}

By the above argument, we also learn $w^*_{\pm,h}$ are invertible for small $h$. 
The inverses are uniformly bounded, and their inverses 
\[
w_{\pm,h}=(w_{\pm,h}^*)^{-1}
\]
are well-defined for $h\in (0,h_0]$. It follows that 
\[
w_{\pm}= (w_{\pm}^*)^{-1}
\]
is well-defined  and diffeomorphic on $w_{\pm}^*[\O_0^h]$ with $h\in (0,h_0]$. 
Thus we can define the classical scattering operator by 
\[
s= w_+^*\circ w_-
\]
on $w^*_-[\O_0^h]$ with sufficiently small $h$.


\section{Symbol classes and their quantization on scattering manifolds}

Here we prepare a pseudodifferential operator calculus which is used extensively in 
the proof of main theorems. We refer to textbooks by H\"ormander \cite{Ho} and Taylor \cite{Ta} for 
the standard theory of microlocal analysis. 

In the following, we employ symbol calculus on $T^*M$, but we always suppose the 
symbol is supported in $T^*M_\infty$, and we use the local coordinate system as in Section~1. 
More specifically, we choose a local coordinate system on $\pa M$: $\{\f_\a: U_\a\to \re^{n-1}\}$, 
$U_\a\subset \pa M$, and we use the coordinate system 
$\{1\otimes\f_\a: \re_+\times U_\a\to \re\times \re^{n-1}\}$ on $M_\infty$. 
We also use a similar local coordinate system on $M_f$ defined by 
$\{1\otimes\f_\a: \re\times U_\a\to \re\times \re^{n-1}\}$. 
We often identify $U_\a$ (or $\re_+\times U_\a$, $\re\times U_\a$, resp.) with $\Ran\f_\a$
(or $\Ran (1\otimes \f_\a)$, resp.). 

\subsection{Symbol classes}

We set a metric either on $T^*M_\infty$ or $T^*M_f$ defined by
\[
g_1 = \frac{dr^2}{\jap{r}^2} + d\rho^2 + d\th^2 +\frac{d\o^2}{\jap{\o}^2},
\]
and consider symbols in $S(m,g_1)$ with a weight function $m$, i.e., 
$a\in S(m,g_1)$ if and only if for any indices $a,\b,\c,\d$, there is $C$ such that 
\[
\bigabs{\pa_r^\a\pa_\rho^\b\pa_\th^\c\pa_\o^\d a(r,\rho,\th,\o)} \leq 
C m(r,\rho,\th,\o)\jap{r}^{-|\a|}\jap{\o}^{-|\d|}. 
\]
Later, we will consider the symbol calculus of such symbols on sets of the form:
$\O^h=\{(r,\rho,\th,\o)\,|\, (hr,\rho,\th,h\o)\in\O\}$ with some compact set 
$\O\subset T^*M_\infty$ (supported away from $\{\o=0\}$) and small $h>0$. 
In such cases, the symbol satisfies 
\[
\bigabs{\pa_r^\a\pa_\rho^\b\pa_\th^\c\pa_\o^\d a(h;r,\rho,\th,\o)} \leq 
C m(h) h^{|\a|+|\d|},
\]
and we denote such ($h$-dependent) symbol as $a\in S_h(m,g_1^h)$, where 
$m$ is an $h$-dependent weight. The corresponding metric is naturally 
\[
g_1^h =h^2 dr^2 +d\rho^2+d\th^2+h^2d\o^2. 
\]

\subsection{Weyl quantization}
Let $\{\chi^2_\a\}$ be a partition of unity on $\pa M$ compatible with our coordinate 
system $\{\f_\a, U_\a\}$, i.e., $\chi_\a\in C_0^\infty(U_\a)$ and 
$\sum_{\a} \chi_\a(\th)^2 \equiv 1$ on $\pa M$. We also denote 
$\tilde \chi_\a(r,\th)= \chi_\a(\th) j(r)\in C^\infty(M_\infty)$. 
Let $a\in S(m,g_1)$ be a symbol on $T^*M_\infty$, and let
$u\in C_0^\infty(T^*M)$. We denote by $a_{(\a)}$ and $G_{(\a)}$ 
the representation of $a$ and $G$ in the local coordinate 
$(1\otimes \f_\a,\re\times U_\a)$, respectively. 
We quantize $a$ by 
\[
Op^W(a)u = \sum_\a \tilde \chi_\a G_{(\a)}^{-1/2} a^W_{(\a)}(r, D_r, \th, D_\th) G_{(\a)}^{1/2}
\tilde \chi_\a u
\]
where $a^W_{(\a)}(r,D_r,\th,D_\th)$ denotes the usual Weyl-quantization on the 
Euclidean space $\re^n$, and we use the identification: 
$\re_+\times U_\a\cong \re_+\times (\Ran\f_\a)$ for each $\a$. 
(Strictly speaking, we should have written it as 
\[
Op^W(a)u = \sum_\a \tilde \chi_\a (\tilde\f_\a)^*\bigpare{G_{(\a)}^{-1/2} a^W_{(\a)}(r, D_r, \th, D_\th) G_{(\a)}^{1/2}
(\tilde \f_\a)_*(\tilde \chi_\a u)},
\]
but we will omit $(\tilde\f_\a)_*$, $(\tilde\f_\a)^*$, etc., without confusions.)
This definition is compatible with the standard definition of pseudodifferential operators 
on manifolds, but we choose specific quantization which preserves the 
asymptotically conic structure of $M$. 
Similarly, for a symbol $a$ on $T^*M_f$, we quantize it by 
\[
Op^W(a)u = \sum_\a \chi_\a H_{(\a)}^{-1/2} a^W_{(\a)}(r, D_r, \th, D_\th) H_{(\a)}^{1/2}
\chi_\a u
\]
for $u\in C_0^\infty(M_f)$, where $H_{(\a)}$ denotes the representation of $H$ in the local coordinate 
$(\f_\a,U_\a)$. In this case, the linear structure in $r$ is preserved. 

In the above definition, we put weights around the locally defined pseudodifferential operators $a_{\a}^W$ 
so that $Op^W(a)$ is symmetric if $a$ is real-valued. Moreover, by virtue of these weights, 
the symbol corresponding to the operator is unique 
including the subprincipal symbol, though we will not take advantage of this fact in this paper. 

The above definition of quantizations also have a convenient property that if we identify 
a symbol $a$ on $T^*M_\infty$ with a symbol on $T^*M_f$ (by the obvious identification), 
then we have 
\[
\mathcal{I} Op^W(a) \mathcal{I}^* = Op^W(a)\quad \text{on } \mathcal{H}
\]
provided $a$ is supported in $\{r>1\}$, and we may identify these quantizations 
by using $\mathcal{I}$. For a symbol supported in $\{r>1\}$, we may consider 
$Op^W(a)$ as an operator from $\mathcal{H}$ to $\mathcal{H}_f$ 
(or from $\mathcal{H}_f$ to $\mathcal{H}$) also. We define them by 
\[
Op^W(a)u = \sum_\a \chi_\a H_{(\a)}^{-1/2} a^W_{(\a)}(r, D_r, \th, D_\th) G_{(\a)}^{1/2}
\tilde \chi_\a u
\]
for $u\in C_0^\infty(M)$ and 
\[
Op^W(a)u = \sum_\a \tilde \chi_\a G_{(\a)}^{-1/2} a^W_{(\a)}(r, D_r, \th, D_\th) H_{(\a)}^{1/2}
\chi_\a u
\]
for $u\in C_0^\infty(M_f)$, respectively. 

If $A=Op^W(a)$, then we denote the (Weyl) symbol of $A$ by $a=\S(A)$. 

\subsection{Hamiltonians}

Now we consider properties of our Schr\"odinger operators and related operators 
as a preparation of the next section. 

We note that, as in the usual Weyl calculus on $\re^n$,  if $a(x,\x)= \sum_{j,k} a_{jk}(x)\x_j\x_k$, then 
\[
Op^W(a)=\sum_{j,k} D_j a_{jk}(x) D_k -\frac14 \sum_{j,k} (\pa_j\pa_k a_{jk}(x)).
\]
Hence, if we let $p$ be the symbol of $P$ as in Subsection~2.3, then we have 
\[
Op^W(p)= P+f,
\]
where $f\in C^\infty(M_f)$ such that 
\[
\bigabs{\pa_r^\a\pa_\th^\b f(r,\th)}\leq C_{\a\b}\jap{r}^{-2-|\a|}
\]
for any $\a,\b$. Thus, we can include this error term in $V$ and we may consider 
$P=Op^W(p)$. On the other hand, it is easy to see $P_f=Op^W(p_f)$ 
on $\mathcal{H}_f$ where $p_f=\frac12 \rho^2$.


\section{Egorov theorem}

Let $(r_0,\rho_0,\th_0,\o_0)\in T^*(\re_+\times \pa M)$, $\o_0\neq 0$, and suppose 
$a\in C_0^\infty(T^*(\re_+\times \pa M))$ is supported in a small neighborhood of 
$(r_0,\rho_0,\th_0,\o_0)$ so that $a$ is supported away from $\{\o=0\}$. 
We set
\[
a^h(r,\rho,\th,\o)= a(h;hr,\rho,\th,h\o), \quad h>0,
\]
where $a$ itself may depend on the parameter $h>0$, but we suppose 
it is bounded uniformly in $C_0^\infty$-topology, and supported in the 
same small neighborhood of $(r_0,\rho_0,\th_0,\o_0)$. We note the notation here is 
different from that of Section~2. Then we set
\[
A_0=Op^W(a^h) \quad \text{on }M. 
\]
We set $\e>0$ so small that 
\[
\exp(tH_{p_c})(\supp a)\cap \{r\leq \e \jap{t}\} =  \emptyset 
\]
for all $t\in\re$. 
We choose $\y\in C^\infty(\re)$ such that $\y(r)=1$ for $r\geq 1$ and $\y(r)=0$ for 
$r\leq 1/2$, and we set 
\[
Y=\y\bigpare{\tfrac{hr}{\e\jap{t}}}.
\]
Then we define 
\[
A(t) =e^{itP_f/h} \mathcal{I}^* Y e^{-itP/h} A_0 
e^{itP/h} Y \mathcal{I} e^{-itP_f/h}
\]
for $t\in\re$. The purpose of this section is to obtain the symbols of $A(t)$ as a pseudodifferential 
operator, and to study its behavior as $t\to\pm\infty$. 

We compute (formally):
\[
\frac{d}{dt}\Bigpare{e^{itP/h} Y\I e^{-itP_f/h}} 
=\frac{i}{h} e^{itP/h} T(t) e^{-itP_f/h},
\]
where 
\[
T(t)=PY\I -Y\I P_f -\frac{h(hr)t}{i\e\jap{t}^3} \y'\bigpare{\tfrac{hr}{\e\jap{t}}} \I.
\]
We further rewrite this as 
\begin{align*}
\frac{d}{dt}\Bigpare{e^{itP/h} Y\I e^{-itP_f/h}}  
&= \frac{i}{h} \Bigpare{e^{itP/h} Y\I e^{-itP_f/h}} 
\Bigpare{e^{itP_f/h} \I^* T(t) e^{-itP_f/h}} \\
&\quad + \frac{i}{h} e^{itP/h}(1-Y\I\I^*) T(t) e^{-itP_f/h} \\
&= \frac{i}{h} \Bigpare{e^{itP/h} Y\I e^{-itP_f/h}} L(t) +R_1(t)
\end{align*}
where 
\[
L(t)= e^{itP_f/h} \I^* T(t) e^{-itP_f/h}, \quad 
R_1(t)= \frac{i}{h} e^{itP/h} (1-Y\I\I^*) T(t) e^{-itP_f/h}.
\]
We now consider the symbols of $T(t)$ and $L(t)$ as pseudodifferential operators. 
By direct computations, it is easy to see that for any indices $\a,\b,\c,\d$, 
\begin{align}\label{eq:es1}
&\bigabs{\pa_r^\a\pa_\rho^\b\pa_\th^\c\pa_\o^\d \S(T(t))(r,\rho,\th,\o)}\\
&\quad \leq C\Bigpare{\jap{r}^{-1-\m}\jap{\rho}^2 +\jap{r}^{-1-\m}\jap{\rho}\jap{\o} 
+\jap{r}^{-2}\jap{\o}^2} \jap{r}^{-|\a|} \jap{\rho}^{-|\b|} \jap{\o}^{-|\d|}.\nonumber 
\end{align}
Since $T(t)$ is supported in $\{r\geq \e\jap{t}/2h\}$, we may replace 
$\jap{r}$ by $\jap{r}+\e\jap{t}/2h$ in the above estimate. We also have 
\begin{align*}
&\biggabs{\pa_r^\a\pa_\rho^\b\pa_\th^\c\pa_\o^\d \biggpare{\S(T(t))-
Y \frac{q(\th,\o)}{r^2}}}\\
&\quad \leq C\Bigpare{\jap{r}^{-1-\m}\jap{\rho}^2 +\jap{r}^{-1-\m}\jap{\rho}\jap{\o} 
+\jap{r}^{-2-\m}\jap{\o}^2} \jap{r}^{-|\a|} \jap{\rho}^{-|\b|} \jap{\o}^{-|\d|}.\nonumber 
\end{align*}
In particular, we learn 
\begin{equation}\label{eq:es2}
\biggabs{\pa_r^\a\pa_\rho^\b\pa_\th^\c\pa_\o^\d \biggpare{\S(T(t))-
Y \frac{q(\th,\o)}{r^2}}} 
\leq C\jap{t}^{-1-\m-|\a|} h^{\m +|\a|+|\d|}
\end{equation}
on $\exp(tH_{p_c})\bigbrac{\supp a^h}$, where the constant is independent of 
$t$ and $h$. 

Now we note, by virtue of the Weyl-calculus (and our choice of the quantization), 
\[
\S(L(t))(r,\rho,\th,\o)= \S(\I^*T(t))(r+(t/h)\rho,\rho,\th,\o). 
\]
Hence we have, by \eqref{eq:es1}, 
\begin{align*}
&\bigabs{\pa_r^\a\pa_\rho^\b\pa_\th^\c\pa_\o^\d \S(L(t))}\\
&\quad \leq C \Bigpare{\jap{\tilde r}^{-1-\m} \jap{\rho}^2+\jap{\tilde r}^{-1-\m} \jap{\rho}\jap{\o} 
+\jap{\tilde r}^{-2}\jap{\o}^2 }\jap{\tilde r}^{-|\a|}\jap{\o}^{-|\d|},
\end{align*}
where $\tilde r =r+(t/h)\rho$. We note we take advantage of the cut-off function $Y$ in this 
estimate. We also note, as well as \eqref{eq:es2}, 
\begin{equation}\label{eq:es3}
\biggabs{\pa_r^\a\pa_\rho^\b\pa_\th^\c\pa_\o^\d \biggpare{\S(L(t))-
\frac{q(\th,\o)}{\tilde r^2}}} 
\leq C\jap{t}^{-1-\m-|\a|} h^{\m+|\a|+|\d|}
\end{equation}
on $\exp(-tH_{p_f})\circ \exp(tH_{p_c})\bigbrac{\supp a^h} = \supp(a^h\circ w_c(t))$. 

We then construct an asymptotic solution to the Heisenberg equation: 
\begin{equation}\label{eq:Heiseiberg}
\frac{d}{dt} B(t) =-\frac{i}{h} [L(t), B(t)], \quad B(0)= \I^*  A_0 \I . 
\end{equation}

\begin{lem}
There exists $b^h(t;r,\rho,\th,\o)\in C_0^\infty(T^*M_f)$ such that 
\begin{enumerate}
\renewcommand{\theenumi}{\roman{enumi}}
\renewcommand{\labelenumi}{{{\rm (\theenumi)} }}
\item $b^h(0)=a^h$. 
\item $b^h(t)$ is supported in $w_c(t/h)^{-1} \bigbrac{\supp a^h}$. 
\item $b^h(t)\in S(1,g_1^h)$, and it is bounded uniformly in $t\in\re$. 
\item $b^h(t)-a^h\circ w_c(t/h)\in S(h^\m,g_1^h)$, i.e., the principal symbol of $b^h(t)$ 
is given by $a^h\circ w_c(t/h)$, and the remainder is bounded uniformly in $t$. 
\item If we set $B(t)=Op^W(b^h(t))$, then 
\[
\Bignorm{\frac{d}{dt}B(t)+\frac{i}{h}[L(t),B(t)]} \leq C_N \jap{t}^{-1-\m} h^N, \quad h>0, 
\]
with any $N$.
\item $B(t)$ converges to $B_\pm$ as $t\to\pm\infty$ in $B(\mathcal{H}_f)$, and 
the symbols : $b^h_\pm :=\S(B_\pm)$ satisfy 
\[
b^h_\pm-a^h\circ w_{c,\pm} \in S(h^\m,g_1^h).
\]
\end{enumerate}
\end{lem}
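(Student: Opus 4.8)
The plan is to construct $b^h(t)$ by the standard transport-equation scheme adapted to the $h$-dependent symbol calculus of Section 3, using the explicit conic scattering geometry of Section 2. First I would set the ansatz $b^h_0(t) = a^h \circ w_c(t/h)$, the leading term dictated by (iv). Since $w_c(s) = \exp(-sH_{p_f})\circ\exp(sH_{p_c})$ and $b^h_0(t)$ is transported along the flow of $\mathbf{1}\otimes(q(\th,\o)/\tilde r^2)$ (which, via \eqref{eq:es3}, is the principal symbol of $L(t)$ up to $O(h^\m)$), I would compute that $\frac{d}{dt}b^h_0(t) + \{L_0(t), b^h_0(t)\} = 0$ exactly, where $L_0(t)$ has symbol $q(\th,\o)/\tilde r^2$. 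The support statement (ii) is then immediate from the definition of $w_c$, and (i) is $b^h_0(0)=a^h$; property (iii) follows because $w_c(t/h)$ and its derivatives are uniformly bounded on the relevant sets by the paragraph in Section 2.2 (after rescaling: on $\O_0^h$-type sets the bounds hold with the $h$-powers absorbed into $g_1^h$).

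Next I would correct the commutator error. With $B_0(t)=Op^W(b^h_0(t))$, the Weyl calculus gives $\frac{d}{dt}B_0(t) + \frac{i}{h}[L(t),B_0(t)] = Op^W(e_1^h(t))$, where $e_1^h(t)$ collects (a) the difference $\frac{i}{h}[L(t)-L_0(t), B_0(t)]$, which by \eqref{eq:es3} and the semiclassical commutator expansion is $O(\jap{t}^{-1-\m}h^\m)$ in $S(h^\m,g_1^h)$ after division by $h$ — here one gains one power of $h$ from each $\pa_\rho\pa_r$ pairing in the Moyal bracket, compensating the $1/h$ — and (b) the subprincipal pieces of $\frac{i}{h}[L_0(t),B_0(t)] - Op^W(\{L_0,b^h_0\})$, which is the genuine $O(h^2)\cdot h^{-1}=O(h)$ correction. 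Then I would solve the inhomogeneous transport equation $\frac{d}{dt}b^h_1(t) + \{L_0(t),b^h_1(t)\} = -e_1^h(t)$ with $b^h_1(0)=0$ by integrating along the flow, picking up a factor $\int \jap{s}^{-1-\m}\,ds < \infty$ so that $b^h_1(t)\in S(h^\m,g_1^h)$ uniformly in $t$, and iterate: at stage $N$ the remainder lies in $S(h^N,g_1^h)$ with an integrable-in-$t$ prefactor. Setting $b^h(t)$ to be (a Borel resummation, or simply a finite truncation sufficient for each fixed $N$, of) $\sum_k b^h_k(t)$ yields (v); properties (i)–(iv) are preserved since every correction is $O(h^\m)$ and supported inside $\supp(a^h\circ w_c(t/h))$.

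For (vi), the convergence $B(t)\to B_\pm$ in $B(\mathcal{H}_f)$ follows from the Cook-type estimate: $\norm{\frac{d}{dt}B(t)}\le \norm{\frac{i}{h}[L(t),B(t)]} + C_N\jap{t}^{-1-\m}h^N$, and the first term is $O(\jap{t}^{-1-\m})$ because $L(t)$ itself, by the bounds just below \eqref{eq:es1} (with $\jap{r}$ replaced by $\jap{\tilde r}$ and using that on $\supp b^h(t)$ one has $\tilde r \sim \jap{t}/h$ from Step 1 of Theorem 2.3's proof), is $O(\jap{t}^{-1-\m}\cdot h)$ as an operator — so $\frac1h[L(t),B(t)]$ is $O(\jap{t}^{-1-\m})$ — hence $\frac{d}{dt}B(t)$ is integrable in $t$. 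At the symbol level, $b^h(t) = a^h\circ w_c(t/h) + O(h^\m)$ converges to $a^h\circ w_{c,\pm} + O(h^\m)$ because $w_c(s)\to w_{c,\pm}$ in $C^\infty$ on compact sets with all derivatives uniformly controlled (Section 2.2), and these bounds are stable under the $h$-rescaling built into $g_1^h$; this gives $b^h_\pm - a^h\circ w_{c,\pm}\in S(h^\m,g_1^h)$.

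\medskip
\noindent\textbf{Main obstacle.} The delicate point is bookkeeping the interplay between the $1/h$ in the Heisenberg equation and the gain of $h$ from each order of the Moyal expansion, \emph{on the non-standard metric} $g_1^h$ where $r$ ranges over a region of size $\sim 1/h$. One must check that the symbol of $L(t)$, although not compactly supported, decays fast enough in $\tilde r$ (the estimates after \eqref{eq:es1}) that every commutator and every transport integral converges uniformly in both $t$ and $h$, with the claimed powers of $h$; in particular the error $L(t)-L_0(t)$ must be shown to lie in $h^\m\jap{t}^{-1-\m}S(1,g_1^h)$ \emph{after} dividing by $h$, which is exactly where the short-range exponent $\m>0$ and the cutoff $Y$ are both essential. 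The rest is a routine, if lengthy, induction modeled on \cite{Na2} and \cite{IN1}.
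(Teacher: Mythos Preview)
Your proposal is correct and follows the same transport--and--iterate scheme as the paper; the only difference is organizational --- the paper first passes to \emph{unscaled} variables (setting $b_0(t)=a\circ w_c(t)$ so that $\partial_t b_0 + \{\ell_0(t),b_0\}=0$ with $\ell_0(t)=q(\theta,\omega)/(r+t\rho)^2$, and only then defining $b_0^h(t;r,\rho,\theta,\omega)=b_0(t/h;hr,\rho,\theta,h\omega)$), which removes the $h^{-1}$ that your transport equation in the scaled picture is missing and makes transparent that each iteration step gains $h^{\mu}$ (so the remainder after $N$ steps lies in $S(h^{N\mu},g_1^h)$, not $S(h^N,g_1^h)$). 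Your identification of the main obstacle is on target.
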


\begin{proof}
We follow the standard procedure to construct asymptotic solutions to 
Heisenberg equations (see, e.g., \cite{Ta} Chapter 8, \cite{Ma} Chapter 4). 
We let 
\[
\ell_0(t;r,\rho,\th,\o) = \frac{q(\th,\o)}
{(r+t\rho)^2}
\]
be the principal symbol of $L(ht)$. If we set 
\[
b_0(t)=a\circ w_c(t) = a \circ \exp(-tH_{p_c})\circ \exp(tH_{p_f}),
\]
then $b_0$ satisfies the equation:
\[
\frac{\pa}{\pa t} b_0(t) =-\{\ell_0(t),b_0(t)\}, \quad b_0(0)= a.
\]
We set $b_0^h(t;r,\rho,\th,\o)= b_0(t/h;hr,\rho,\th,h\o)$, 
and we also set $B_0(t)=Op^W(b_0^h(t))$. We note 
\[
\bigabs{\pa_r^\a\pa_\rho^\b\pa_\th^\c\pa_\o^\d b^h_0(t;r,\rho,\th,\o)}\leq C h^{|\a|+|\d|}
\]
uniformly in $t$ with any $\a,\b,\c,\d$, since $b_0(t)$ converges to $a\circ w_{c,\pm}$ as
$t\to\pm \infty$. 
We write
\[
R^0_0(t) = \frac{d}{dt} B_0(t)+\frac{i}{h}[L(t),B_0(t)], \quad r^0_0(t)=\S(R^0_0(t)).
\]
Then by \eqref{eq:es3} and the symbol calculus, $r^0_0(t)$ is supported on 
$w_c(t/h)^{-1}\bigbrac{\supp a^h}$ modulo $O(h^\infty)$-terms, and 
\begin{equation}\label{eq:es4}
\bigabs{\pa_r^\a\pa_\rho^\b\pa_\th^\c\pa_\o^\d r^0_0(t)}\leq C\jap{t}^{-1-\m-|\a|}
h^{\m+|\a|+|\d|}
\end{equation}
with any $\a,\b,\c,\d$. We set $\tilde r_0^0(t)$ so that 
\[
\tilde r^0_0(t/h;hr,\rho,\th,h\o)= r^0_0(t;r,\rho,\th,\o), 
\]
and solve the transport equation:
\[
\frac{\pa}{\pa t} b_1(t) +\{\ell_0(t),b_1(t)\} =-\tilde r^0_0(t), \quad b_1(0)=0.
\]
By \eqref{eq:es4}, it is easy to observe
\[
\bigabs{\pa_r^\a\pa_\rho^\b\pa_\th^\c\pa_\o^\d b_1(t;r,\rho,\th,\o)}
\leq C h^\m
\]
uniformly in $t$. 
Moreover, $b_1(t)$ converges to a symbol supported in $w_{c,\pm}^{-1}\bigbrac{\supp a}$ 
in $C_0^\infty$-topology as $t\to\pm\infty$. 
We then set 
\[
B_1(t) =Op^W(b_1^h(t)), \quad b_1^h(t;r,\rho,\th,\o)=b_1(t/h;hr,\rho,\th,h\o). 
\]
We construct $b_j$, $j=1,2,\dots$, iteratively, so that $b_j^h \in S(h^{j\m},g_1^h)$, and set 
\[
b^h(t)\sim \sum_{j=0}^\infty b_j^h(t), \quad B(t)=Op^W(b^h(t)).
\]
By the construction, $b^h(t)$ and $B(t)=Op^W(b^h(t))$ satisfy the assertion. 
\end{proof}

We then observe that $A(t)$ is very close to $B(t)$ constructed as above. 

\begin{lem} For any $N$, there is $C_N>0$ such that 
\[
\norm{A(t)-B(t)}\leq C_N h^N, \quad t\in\re.
\]
In particular, 
\[
A_\pm := \wlim_{t\to\pm\infty} A(t)
\]
have the symbols $b^h_\pm$
as pseudodifferential operators. 
\end{lem}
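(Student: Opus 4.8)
The plan is to show that $A(t)$ and $B(t)$ solve, up to negligible errors, the same Heisenberg-type evolution with the same initial data, and then integrate the difference via a Duhamel formula. Recall from the computation preceding \eqref{eq:Heiseiberg} that
\[
\frac{d}{dt}\Bigpare{e^{itP/h} Y\I e^{-itP_f/h}} = \frac{i}{h} \Bigpare{e^{itP/h} Y\I e^{-itP_f/h}} L(t) + R_1(t),
\]
with $R_1(t)= \tfrac{i}{h} e^{itP/h}(1-Y\I\I^*)T(t) e^{-itP_f/h}$. The first step is to control $R_1(t)$: since $1-Y\I\I^*$ is supported in a region of $r$ comparable to $\e\jap{t}/h$ and vanishes there to high order (the cut-off $Y$ and the mapping property $\I\I^*$ agree on $\{r>1\}$), while $T(t)$ obeys the symbol bound \eqref{eq:es1} with $\jap r$ replaceable by $\jap r + \e\jap t/(2h)$, one gets $\norm{R_1(t)}\leq C_N\jap{t}^{-1-\m}h^N$ for every $N$. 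Taking adjoints gives the analogous identity for $e^{itP_f/h}\I^* Y e^{-itP/h}$, again modulo an $O(h^N)$ remainder. The main obstacle in this step is bookkeeping the support of $1-Y\I\I^*$ against the geodesic-escape estimate fixed by the choice of $\e$ — but this is exactly what the cut-off $Y$ was introduced to handle, so it is purely routine once set up carefully.

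Next I differentiate $A(t)= e^{itP_f/h}\I^* Y e^{-itP/h}\,A_0\,e^{itP/h} Y\I e^{-itP_f/h}$ using the product rule and the two identities above. The terms where the derivative falls on $A_0$ cancel, and the remaining terms assemble into
\[
\frac{d}{dt}A(t) = -\frac{i}{h}[L(t),A(t)] + R(t),
\]
where $R(t)$ collects the contributions of $R_1(t)$ and its adjoint, hence $\norm{R(t)}\leq C_N\jap{t}^{-1-\m}h^N$. Meanwhile, Lemma 4.1(v) gives exactly $\frac{d}{dt}B(t) + \tfrac{i}{h}[L(t),B(t)] = \tilde R(t)$ with $\bignorm{\tilde R(t)}\leq C_N\jap{t}^{-1-\m}h^N$, and Lemma 4.1(i) together with $B(0)=\I^* A_0\I$ gives $A(0)=B(0)$.

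Setting $D(t)=A(t)-B(t)$, we then have $D(0)=0$ and
\[
\frac{d}{dt}D(t) = -\frac{i}{h}[L(t),D(t)] + \Bigpare{R(t)-\tilde R(t)}.
\]
Since $L(t)$ is (essentially) self-adjoint, the propagator $V(t,s)$ it generates — i.e. $\tfrac{d}{dt}V(t,s) = -\tfrac{i}{h}[L(t),\cdot]$ conjugation, which is unitary on $B(\mathcal H_f)$ in operator norm — is norm-preserving, so Duhamel gives $\norm{D(t)} \leq \int_0^{|t|}\norm{R(s)-\tilde R(s)}\,ds \leq C_N h^N \int_{-\infty}^\infty \jap{s}^{-1-\m}\,ds = C_N' h^N$, uniformly in $t$, since $\m>0$ makes the integral finite. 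This is the claimed estimate. Finally, the weak limits: $B(t)\to B_\pm$ in $B(\mathcal H_f)$ by Lemma 4.1(vi), and $\norm{A(t)-B(t)}\to$ is bounded by $C_N h^N$ uniformly, so $A(t)$ has the same weak (indeed norm, for fixed $h$) limits $A_\pm$, and $\S(A_\pm)=\S(B_\pm)=b^h_\pm$. The one genuine subtlety is justifying that the conjugation flow generated by the time-dependent operator $L(t)$ exists and is norm-contractive on $B(\mathcal H_f)$; this follows because $L(t)=e^{itP_f/h}\I^* T(t)e^{-itP_f/h}$ with $T(t)$ a (symmetric, to leading order) operator, but one may instead avoid this point entirely by working directly with $e^{\pm itP/h}$, $e^{\pm itP_f/h}$ and the isometry/cutoff structure, which is the route I would actually take to keep the argument self-contained.
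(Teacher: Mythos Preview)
Your argument has a genuine gap at the first step. The bound $\norm{R_1(t)}\leq C_N\jap{t}^{-1-\m}h^N$ is not true: the supports of $1-Y\I\I^*$ and $T(t)$ overlap on the transition shell $\tfrac{\e\jap{t}}{2h}\le r\le \tfrac{\e\jap{t}}{h}$, where neither factor vanishes. On that shell $T(t)$ is still a second-order differential operator (in particular unbounded in the $\rho,\o$ directions), and the symbol decay \eqref{eq:es1} with $\jap{r}\sim \jap{t}/h$ gives at best a fixed power of $h$, not $h^N$. There is no ``vanishing to high order'' here; $1-Y$ is just a smooth cutoff.

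Even if you write the remainder of $\tfrac{d}{dt}A(t)$ correctly as $R_1(t)^*A_0W(t)+W(t)^*A_0R_1(t)$, you then need to control $(1-Y\I\I^*)e^{-itP/h}A_0e^{itP/h}$ composed with the unbounded $T(t)$, which amounts to knowing that $e^{-itP/h}A_0e^{itP/h}$ stays microlocalized away from the cutoff region---essentially what you are trying to prove. A second issue is that $L(t)$ is not self-adjoint, so your Heisenberg equation for $A(t)$ picks up an extra term $\tfrac{i}{h}(L(t)-L(t)^*)A(t)$; again this involves $A(t)$ and is circular.

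The paper avoids both problems by conjugating the other way: it sets $\tilde B(t)=e^{itP/h}Y\I e^{-itP_f/h}B(t)e^{itP_f/h}\I^*Ye^{-itP/h}$ and shows $\norm{A_0-\tilde B(t)}=O(h^N)$. When one differentiates $\tilde B(t)$, the dangerous remainders are $R_1(t)B(t)(\cdots)$ and $(\cdots)B(t)(L(t)-L(t)^*)(\cdots)$, and now the point is that $B(t)$ is a \emph{known} PDO whose symbol support is separated by $\sim\jap{t}/h$ from that of $e^{itP_f/h}(1-Y\I\I^*)T(t)e^{-itP_f/h}$, so the composition is $O(h^\infty\jap{t}^{-\infty})$ by disjoint-support calculus. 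The remaining ``outer'' error $R_2=2\norm{(1-Y\I\I^*Y)e^{-itP/h}A_0}$ is handled by the independently proved propagation estimate Corollary~A.2. Your closing parenthetical---work directly with $e^{\pm itP/h}$, $e^{\pm itP_f/h}$ and the cutoff structure---is precisely this route, and it is the substance of the proof, not a technical aside.
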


\begin{proof}
We first observe 
\begin{align*}
\norm{A(t)-B(t)} &= \bignorm{e^{itP_f/h} \I^* Y e^{-itP/h} A_0 e^{itP/h} Y \I e^{-itP_f/h} -B(t)}\\
&=\bignorm{\I^* Y e^{-itP/h} A_0 e^{itP/h} Y \I -e^{-itP_f/h}B(t)e^{itP_f/h}}\\
&\leq \bignorm{Y\I\I^*Y e^{-itP/h} A_0 e^{itP/h} Y \I\I^* Y -Y\I e^{-itP_f/h}B(t)e^{itP_f/h}\I^* Y}\\
&\leq \bignorm{e^{-itP/h} A_0 e^{itP/h}  -Y\I e^{-itP_f/h}B(t)e^{itP_f/h}\I^* Y}
+R_2\\
&= \bignorm{A_0- \tilde B(t)}+R_2,
\end{align*}
where 
\[
R_2 = 2\bignorm{(1-Y\I\I^*Y)e^{-itP/h} A_0}
\]
and 
\[
\tilde B(t) = e^{itP/h} Y\I e^{-itP_f/h}B(t)e^{itP_f/h}\I^* Y e^{-itP/h}. 
\]
By Corollary A.2 in Appendix A, we learn $R_2= O(\jap{t}^{-N}h^N)$ with any $N$.
We then show $\tilde B(t)$ is very close to $A_0$ uniformly in $t$. 
We compute
\begin{align*}
\frac{d}{dt}\tilde B(t) 
&= \bigpare{e^{itP/h} Y\I e^{-itP_f/h}}\frac{d}{dt}B(t)\bigpare{e^{itP_f/h}\I^* Y e^{-itP/h}}\\
&\quad + \frac{i}{h}\bigpare{ e^{itP/h} Y\I e^{-itP_f/h}} L(t) B(t) \bigpare{e^{itP_f/h} \I^* Y e^{-itP/h}}\\
& \quad - \frac{i}{h}\bigpare{ e^{itP/h} Y\I e^{-itP_f/h}} B(t) L(t)^* \bigpare{e^{itP_f/h} \I^* Y e^{-itP/h}}\\
&\quad +R_1(t) B(t) \bigpare{e^{itP_f/h} \I^* Y e^{-itP/h}} 
-\bigpare{e^{itP/h} Y\I e^{-itP_f/h}} B(t) R_1(t)^*\\
&= \bigpare{e^{itP/h} Y\I e^{-itP_f/h}}\biggpare{\frac{d}{dt}B(t)+\frac{i}{h}[L(t),B(t)]}
\bigpare{e^{itP_f/h}\I^* Y e^{-itP/h}} \\
&\quad +R_3(t)
\end{align*}
where 
\begin{align*}
R_3(t) &= R_1(t) B(t) \bigpare{e^{itP_f/h} \I^* Y e^{-itP/h}} 
-\bigpare{e^{itP/h} Y\I e^{-itP_f/h}} B(t) R_1(t)^*\\
&\quad 
+\frac{i}{h}\bigpare{ e^{itP/h} Y\I e^{-itP_f/h}} B(t) (L(t)-L(t)^*) \bigpare{e^{itP_f/h} \I^* Y e^{-itP/h}}.
\end{align*}
We can show $\norm{R_3(t)}=O(\jap{t}^{-N} h^N)$ with any $N$. For example, 
\begin{align*}
&\bignorm{R_1(t) B(t) \bigpare{e^{itP_f/h} \I^* Y e^{-itP/h}}} 
\leq h^{-1} \bignorm{(1-Y\I\I^*)T(t) e^{-itP_f/h} B(t)}\\
&\qquad = h^{-1} \bignorm{e^{itP_f/h}(1-Y\I\I^*)T(t) e^{-itP_f/h} B(t)}.
\end{align*}
As we have seen already, $e^{itP_f/h}(1-Y\I\I^*)T(t) e^{-itP_f/h}$ is a pseudodifferential 
operator, and the support is separated from the support of $b^h(t)$. Moreover,  
the distance is bounded from below by $c\jap{t}h^{-1}$, $c>0$. Thus the product has 
a vanishing symbol, and the norm is $O(\jap{t}^{-N}h^N)$ with any $N$. The other terms 
are estimated similarly. 
Combining this with Lemma~4.1-(v), we learn 
\[
\bignorm{\tfrac{d}{dt} \tilde B(t)} \leq C_N \jap{t}^{-1-\m} h^N
\]
with any $N$, and hence 
$\bignorm{\tilde B(t)-\tilde B(0)}\leq C_N h^N$.  
We note 
\[
\tilde B(0)= \y\bigpare{\tfrac{hr}{\e}} \I \I^* A_0 \I \I^* \y\bigpare{\tfrac{hr}{\e}} 
=A_0+ O(h^N)
\]
by the choice of $\e>0$. Combining these, we conclude the assertion. 
\end{proof}


\section{Proof of Theorems \ref{thm:WF} and \ref{thm:MZ}}

Let $(r_0,\rho_0,\th_0,\o_0)\in T^*(\re_+\times \pa M)$, and suppose $\o_0\neq 0$
as in the last section. Also we let $a\in C_0^\infty(T^*(\re_+\times \pa M))$ be supported in 
a small neighborhood of $(r_0,\rho_0,\th_0,\o_0)$ and we set
\[
A_0=Op^W(a^h), \quad a^h(r,\rho,\th,\o)=a(hr,\rho,\th,h\o). 
\]
Let $\e>0$ also as in the last section. We write 
\[
(r_\pm,\rho_\pm,\th_\pm,\o_\pm)=w_{c,\pm}^{-1}(r_0,\rho_0,\th_0,\o_0)
\]
as in Section~2, and we recall $w_{c,\pm}$ are diffeomorphisms from 
$\re\times \re_\pm\times (T^*\pa M\setminus 0)$ to $\re_+\times \re\times (T^*\pa M\setminus 0)$. 
We also note 
\[
E_0=p_c(r_0,\rho_0,\th_0,\o_0)=\frac12\rho_\pm^2 >0
\]
by the conservation of the energy. 

\begin{lem}
If $\d>2\e^2$, then 
\[
\wlim_{t\to\pm\infty} \y(P_f/\d)A(t) \y(P_f/\d) = \y(P_f/\d) W_\pm^* A_0 W_\pm \y(P_f/\d).
\]
\end{lem}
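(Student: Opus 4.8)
The statement compares two weak limits: the limit of $\psi(P_f/\delta)A(t)\psi(P_f/\delta)$ and the operator $\psi(P_f/\delta)W_\pm^*A_0W_\pm\psi(P_f/\delta)$. The natural strategy is to expand $A(t)$ using its definition $A(t)=e^{itP_f/h}\I^*Ye^{-itP/h}A_0e^{itP/h}Y\I e^{-itP_f/h}$, insert the spectral cutoffs, and show that $Y\I e^{-itP_f/h}\psi(P_f/\delta)$ may be replaced (up to errors that vanish as $t\to\pm\infty$) by $e^{-itP/h}W_\pm\psi(P_f/\delta)$. Concretely, I would first prove that for $\phi$ in a suitable dense set (e.g. with $\widehat\phi$ supported away from $\rho=0$ and in the correct half-line so that $\phi\in\mathcal H_{f,\pm}$), one has
\[
\bignorm{\bigpare{Y\I e^{-itP_f/h} - e^{-itP/h}W_\pm e^{-itP_f/h}}\psi(P_f/\delta)\phi}\to 0
\]
as $t\to\pm\infty$. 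The cutoff $\psi(P_f/\delta)$ restricts to energies $\geq \delta/2$, i.e. $|\rho|\geq\sqrt\delta>\sqrt 2\,\e$; on this energy shell the classical trajectories escape to $r\to\infty$ at linear speed, so the insertion of $Y=\psi(hr/\e\jap t)$ is harmless in the limit (the region where $Y\neq 1$ is not reached), and the standard existence/completeness of $W_\pm=W_\pm(P,P_f,\I)$ from \cite{IN2} gives the replacement. This is really a propagation-of-support statement combined with the definition of the wave operators.

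**Key steps, in order.** (1) Reduce to a dense set of vectors $\phi\in\mathcal H_{f,\pm}$ with $\supp\widehat\phi$ compact and away from $\rho=0$; the bounded operators on both sides then let us pass to general $\psi(P_f/\delta)\phi$. (2) Use the definition of $W_\pm$ as $\slim_{t\to\pm\infty}e^{itP/h}\I e^{-itP_f/h}$ (here with the semiclassical parameter $h$ fixed — we only need convergence in $t$) to write $\I e^{-itP_f/h}\phi = e^{-itP/h}W_\pm\phi + o(1)$. (3) Show the front cutoff $Y$ is asymptotically trivial on the relevant states: since $e^{-itP_f/h}\psi(P_f/\delta)\phi$ is concentrated, in the classical limit, in $\{|r|\geq c\jap t\}$ with $c>\e/h$-scale chosen so that $hr/\e\jap t\geq 1$, we get $(1-Y)\I e^{-itP_f/h}\psi(P_f/\delta)\phi\to 0$; this is a Mourre/propagation estimate for $P_f$ (or just stationary-phase on $\widehat\phi$), using $\d>2\e^2$ precisely to guarantee the speed dominates $\e$. (4) Assemble: $A(t)\psi(P_f/\delta)\phi = e^{itP_f/h}\I^*Y e^{-itP/h}A_0 e^{itP/h}Y\I e^{-itP_f/h}\psi(P_f/\delta)\phi$, replace the inner $Y\I e^{-itP_f/h}$ by $e^{-itP/h}W_\pm e^{-itP_f/h}$, observe $A_0$ is bounded and $e^{itP/h}e^{-itP/h}=1$ telescopes, then replace the outer $\I^*Y e^{-itP/h}$ acting on the resulting state by $e^{-itP_f/h}W_\pm^*$ using the adjoint version of step (2)–(3) (i.e. $Y\I^* e^{itP/h}$ or rather its weak form — note the outer factor pairs against test vectors, so weak convergence suffices). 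This yields $e^{itP_f/h}e^{-itP_f/h}W_\pm^*A_0W_\pm e^{-itP_f/h}e^{itP_f/h}\psi(P_f/\delta)\phi = W_\pm^*A_0W_\pm\psi(P_f/\delta)\phi$ in the limit; commuting $\psi(P_f/\delta)$ back out (it commutes with $P_f$, hence with $e^{\pm itP_f/h}$) gives the claimed identity after sandwiching with the left $\psi(P_f/\delta)$.

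**Main obstacle.** The delicate point is step (3): controlling the front cutoff $Y=\psi(hr/\e\jap t)$ uniformly enough that its insertion does not spoil the limit, and doing so \emph{on the energy-localized states} rather than pointwise on classical trajectories. One must show that $(1-Y)e^{-itP/h}A_0e^{itP/h}Y\I e^{-itP_f/h}\psi(P_f/\delta)\phi$ — and the analogous outgoing error from the outer $\I^*Y$ — tend to zero; the support of $A_0$ lives near $(r_0,\rho_0,\th_0,\o_0)$ and, under $e^{-itP/h}\cdot e^{itP/h}$, propagates along the $P$-flow, which by Theorem~2.1 stays $O(h^\m)$-close to the conic flow, hence also escapes $\{r\leq\e\jap t\}$ for our choice of $\e$; this is exactly the condition $\exp(tH_{p_c})(\supp a)\cap\{r\leq\e\jap t\}=\emptyset$ imposed in Section~4. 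So the obstacle is bookkeeping: matching the $h$-dependence of $a^h$, the scaling of $Y$, and the energy threshold $\d>2\e^2$, and invoking the appropriate decay estimate (Corollary~A.2 handles the $(1-Y\I\I^*)$-type errors). Once those pieces are lined up the identity follows from the intertwining $e^{itP_f/h}\psi(P_f/\delta)=\psi(P_f/\delta)e^{itP_f/h}$ and the defining property of $W_\pm$; everything else is routine.
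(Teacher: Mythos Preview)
Your approach is essentially the paper's: show $(1-Y)\I e^{-itP_f/h}\psi(P_f/\delta)\to 0$ strongly, deduce $e^{itP/h}Y\I e^{-itP_f/h}\psi(P_f/\delta)\to W_\pm\psi(P_f/\delta)$ strongly, and pass to the weak limit of the sandwich using boundedness of $A_0$ and the adjoint convergence on the outer factor. The paper carries out your step~(3) by stationary phase for the \emph{free} evolution alone (the stationary point in $\rho$ is $hr=t\rho$, and $\delta>2\e^2$ forces $|\rho|/\e>1$ on $\supp\psi(P_f/\delta)$), so your ``main obstacle'' paragraph overcomplicates matters: neither the $P$-flow on $\supp a^h$, nor Theorem~2.1, nor Corollary~A.2 is needed here --- those estimates are what drive Lemma~4.2, whereas this lemma is purely about removing the cutoff $Y$ against the free dynamics.
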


\begin{proof}
It is easy to show by the stationary phase method that 
\[
\slim_{t\to\pm\infty} \bigpare{1-\y\bigpare{\tfrac{hr}{\e\jap{t}}}}\mathcal{I} e^{-itP_f/h} \y(P_f/\d)=0
\]
(for fixed $h$), since the stationary points (in $\rho$) satisfy $hr=t \rho$. 
Now this implies 
\[
\slim_{t\to\pm\infty} e^{itP/h}Y\mathcal{I} e^{-itP_f/h}\y(P_f/\d)
=W_\pm \y(P_f/\d)
\]
and the claim follows immediately. 
\end{proof}

This implies, combined with Lemmas 4.1 and 4.2:

\begin{lem}
Let $A_0$ as above. Then $W_\pm^* A_0 W_\pm$ are pseudodifferential operators with 
the symbols $b^h_\pm$ given in Lemma~4.1. In particular, $\S(W_\pm^* A_0 W_\pm)$ 
are supported in $w_{c,\pm}^{-1}[\supp a^h]$ modulo $O(h^\infty)$-terms, and the 
principal symbols (modulo $S(h^\m,g_1^h)$) are given by $a^h\circ w_{c,\pm}$. 
\end{lem}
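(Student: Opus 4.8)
The plan is to deduce Lemma~5.3 directly from the three preceding lemmas, combining the dynamical identification of the wave operator limit (Lemma~5.1) with the Egorov-type construction (Lemmas~4.1 and~4.2). First I would fix $\delta>2\e^2$ (so that Lemma~5.1 applies), and note that because $a^h$ is supported in a small neighborhood of $(r_0,\rho_0,\th_0,\o_0)$ with energy $E_0=\tfrac12\rho_\pm^2>0$, one can arrange $\supp a^h$ to lie in $\{p_c<\delta\}$ after shrinking the neighborhood; hence the energy cutoffs $\y(P_f/\delta)$ act as the identity on the relevant range up to $O(h^\infty)$, and may be inserted or removed freely in the limiting expressions. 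Concretely, since $W_\pm$ intertwine $P_f$ and $P$, and since $A_0=Op^W(a^h)$ is (modulo $O(h^\infty)$) localized in $\{P<\delta\}$, we have $W_\pm^*A_0W_\pm = \y(P_f/\delta)W_\pm^*A_0W_\pm\y(P_f/\delta)+O(h^\infty)$.

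Next I would chain the identities: by Lemma~5.1,
\[
\y(P_f/\delta)\,W_\pm^* A_0 W_\pm\,\y(P_f/\delta)
=\wlim_{t\to\pm\infty}\y(P_f/\delta)\,A(t)\,\y(P_f/\delta),
\]
while by Lemma~4.2 the operators $A(t)$ converge weakly to $A_\pm$, which by Lemmas~4.2 and~4.1-(vi) is the pseudodifferential operator $Op^W(b^h_\pm)$. Since $\y(P_f/\delta)$ is a fixed bounded operator, passing to the weak limit commutes with the cutoffs, so
\[
\y(P_f/\delta)\,W_\pm^* A_0 W_\pm\,\y(P_f/\delta)
=\y(P_f/\delta)\,Op^W(b^h_\pm)\,\y(P_f/\delta).
\]
Combined with the first step, this gives $W_\pm^* A_0 W_\pm = Op^W(b^h_\pm)$ modulo $O(h^\infty)$. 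The remaining assertions are read off from Lemma~4.1: part~(vi) gives $b^h_\pm - a^h\circ w_{c,\pm}\in S(h^\mu,g_1^h)$, so the principal symbol is $a^h\circ w_{c,\pm}$; and since $b^h_\pm=\S(B_\pm)$ with $B_\pm=\wlim B(t)$ and each $b^h(t)$ is supported in $w_c(t/h)^{-1}[\supp a^h]$ (Lemma~4.1-(ii)), the support of $b^h_\pm$ lies in $w_{c,\pm}^{-1}[\supp a^h]$ modulo $O(h^\infty)$, as claimed.

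The main obstacle I anticipate is the bookkeeping around the energy cutoffs $\y(P_f/\delta)$: one must check carefully that they can be harmlessly inserted on both sides, which requires that (after shrinking the support of $a$) the classical trajectories $\exp(tH_{p_c})(\supp a)$ — and hence the supports of $b^h(t)$ — stay inside the energy shell $\{p_f<\delta\}$ for all $t$, so that multiplication by $\y(P_f/\delta)$ changes $Op^W(b^h_\pm)$ only by an $O(h^\infty)$ term. This uses the conservation of the free energy $p_f=\tfrac12\rho^2$ along $w_{c,\pm}$ together with the identity $E_0=\tfrac12\rho_\pm^2$; the interior trajectory energies are controlled by $E_0$ as in Section~2. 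Everything else is a formal manipulation of weak limits and the already-established symbolic statements, so no further calculation is needed.
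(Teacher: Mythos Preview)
Your approach is the same as the paper's: the paper simply states that Lemma~5.2 ``follows, combined with Lemmas~4.1 and~4.2'' from Lemma~5.1, and you have spelled out the chaining of weak limits and the removal of the cutoffs. That is exactly the right plan.

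There is, however, a consistent sign error in your handling of the energy cutoff. Recall that $\y(r)=1$ for $r\geq 1$ and $\y(r)=0$ for $r\leq 1/2$, so $\y(P_f/\d)$ is a \emph{high}-energy cutoff: it acts as the identity on $\{P_f\geq\d\}$, not on $\{P_f<\d\}$. Thus the condition you want is $\supp a\subset\{p_c>\d\}$ (and correspondingly $\supp b^h_\pm\subset\{p_f>\d\}$), not $\{p_c<\d\}$ as you wrote three times. Concretely, since $p_c=E_0$ on the base point and $p_f=\tfrac12\rho_\pm^2=E_0$ on $w_{c,\pm}^{-1}[\supp a^h]$, after shrinking the support of $a$ and choosing $\e$ small enough one has $2\e^2<E_0$, and any $\d\in(2\e^2,E_0)$ works: Lemma~5.1 applies because $\d>2\e^2$, while the cutoffs $\y(P_f/\d)$ and (via intertwining) $\y(P/\d)$ equal the identity on the relevant symbolic supports modulo $O(h^\infty)$ because those supports lie in $\{p_f>\d\}$ respectively $\{p>\d\}$. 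With this correction your argument goes through verbatim.
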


For the moment, we set 
\[
\rho_0=0,\quad \text{and hence}\quad r_\pm=0.
\]
Then we may take $\e=\sqrt{E_0}$ provided $a$ is supported in a sufficiently small 
neighborhood of $(r_0,0,\th_0,\o_0)$. 

Now let us consider $(0,\rho_-,\th_-,\o_-)$ (with $\o_-\neq 0$, $\rho_->2\e$) 
is given, and $(0,\rho_0,\th_0,\o_0)$ is set by 
$w_{c,-}(0,\rho_-,\th_-,\o_-)= (0,\rho_0,\th_0,\o_0)$.
The converse of Lemma~5.2 is given as follows: 

\begin{lem}
Let $\tilde a\in C_0^\infty(\re\times \re_-\times (T^*\pa M\setminus 0))$ be supported in 
a small neighborhood of $(0,\rho_-,\th_-,\o_-)$, and let 
\[
\tilde A= Op^W(\tilde a^h), \quad \tilde a^h(r,\rho,\th,\o)=\tilde a(hr,\rho,\th,h\o).
\]
Then there is a symbol $a_0^h$ supported in 
$w_{c,-}[\supp \tilde a^h]$ such that for any $f\in C_0^\infty(\re_+)$, 
\[
f(P)A_0 f(P) = W_- f(P_f) \tilde A f(P_f) W_-^*,
\]
where $A_0=Op^W(a_0^h)$. 
Moreover, the principal symbol (modulo $S(h^\m,g_1^h)$) is give by $\tilde a^h\circ w_{c,-}^{-1}$. 
\end{lem}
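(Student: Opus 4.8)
The plan is to run the argument of Section~4 and Lemma~5.3 ``backwards,'' exploiting the fact that $w_{c,-}$ is a diffeomorphism from $\re\times\re_-\times(T^*\pa M\setminus 0)$ onto $\re_+\times\re\times(T^*\pa M\setminus 0)$. First I would define the candidate symbol directly: set $a_0 := \tilde a\circ w_{c,-}^{-1}$ on the (relatively compact) image $w_{c,-}[\supp\tilde a]$, check using the uniform $C^\infty$-bounds on $w_{c,-}$ and $w_{c,-}^{-1}$ established at the end of Section~2.2 that $a_0\in C_0^\infty(\re_+\times\re\times(T^*\pa M\setminus 0))$ with support in $w_{c,-}[\supp\tilde a]$, and rescale to $a_0^h(r,\rho,\th,\o)=a_0(hr,\rho,\th,h\o)$, $A_0=Op^W(a_0^h)$. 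Since $\rho_->2\e$ and the support of $\tilde a$ is small, the energy $E_0=\tfrac12\rho_-^2$ is bounded below on the support, so after localizing by $f(P)$ (equivalently $f(P_f)$ via the intertwining $W_-P_f=PW_-$) we may freely insert the cutoffs $\y(P_f/\d)$ of Lemma~5.1 with $\d>2\e^2$, and the pair $(P,P_f)$ is effectively energy-bounded on the relevant spectral subspace.

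Next I would apply Lemma~5.3 to this $A_0$: it gives that $W_-^*A_0W_-$ is a pseudodifferential operator whose symbol is $b^h_-$ from Lemma~4.1, supported in $w_{c,-}^{-1}[\supp a_0^h]\subset \supp\tilde a^h$ modulo $O(h^\infty)$, with principal symbol $a_0^h\circ w_{c,-} = \tilde a^h$ modulo $S(h^\m,g_1^h)$. Thus $W_-^*A_0W_- = \tilde A + E$, where $E=Op^W(e^h)$ with $e^h\in S(h^\m,g_1^h)$ supported (mod $O(h^\infty)$) near $(0,\rho_-,\th_-,\o_-)$. Multiplying on both sides by $f(P_f)$ and using $W_-f(P_f)=f(P)W_-$ together with $W_-^*W_-=1$ on $\mathcal H_{f,-}$ (and $W_-W_-^*$ the projection onto $\mathcal H_{ac}(P)$, which acts as the identity after $f(P)$ for $f$ supported in $(0,\infty)$ avoiding the discrete spectrum), one gets
\[
f(P)A_0f(P) = W_- f(P_f)\bigpare{\tilde A + E} f(P_f) W_-^*.
\]
This is the claimed identity with an extra term $W_-f(P_f)Ef(P_f)W_-^*$ of order $h^\m$; to turn this into an exact statement I would instead \emph{define} $a_0^h$ by a Neumann-type iteration — replace $a_0$ by $a_0 - (\text{correction of order }h^\m) + \cdots$, at each stage feeding back the error symbol $e^h\circ w_{c,-}^{-1}$ — so that the errors are pushed to $O(h^\infty)$, exactly parallel to the iterative construction of $b^h(t)\sim\sum_j b_j^h(t)$ in the proof of Lemma~4.1. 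The resulting $a_0^h$ still has an asymptotic expansion with principal term $\tilde a^h\circ w_{c,-}^{-1}$, which is the assertion.

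The main obstacle I anticipate is \emph{bookkeeping the cutoffs and the $O(h^\infty)$ tails} rather than any deep new estimate: one must verify that inserting $\y(hr/\e\jap t)$ and $\y(P_f/\d)$ does not move supports outside $w_{c,-}[\supp\tilde a]$ (this is where $\rho_->2\e$, i.e. $\e=\sqrt{E_0}$ with small support, is used, exactly as in the paragraph preceding Lemma~5.4), and that the projection $W_-W_-^*$ really may be dropped after $f(P)$ — i.e. $\supp f$ must avoid the discrete spectrum of $P$, which is harmless since we only claim the identity for such $f$, or else one absorbs the difference into the $O(h^\infty)$ remainder because $A_0$ is microlocally supported at positive energy. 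Everything else — the symbol calculus, the uniform-in-$t$ control, the convergence of the Neumann iteration in $S(h^\m,g_1^h)$ — is a rerun of Section~4 with the roles of $w_{c,\pm}$ and $w_{c,\pm}^{-1}$ interchanged, so I would present it compactly by reference to Lemmas~4.1, 4.2, 5.1 and 5.3.
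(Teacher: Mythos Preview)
Your proposal is correct and follows essentially the same route as the paper: set the leading term to $\tilde a^h\circ w_{c,-}^{-1}$, apply the forward result (what the paper calls Lemma~5.2 --- your labels are off by one throughout) to measure the $S(h^\m,g_1^h)$ error, iterate to push it to $O(h^\infty)$, and then use intertwining together with $W_-f(P_f)W_-^*=f(P)$. The one place the paper is cleaner is the step $W_-W_-^*f(P)=f(P)$: instead of restricting $\supp f$ to avoid point spectrum or absorbing the defect into an $O(h^\infty)$ remainder, the paper simply invokes the absence of positive embedded eigenvalues (so any $f\in C_0^\infty(\re_+)$ works), and your detour through the $\y(P_f/\d)$ cutoffs of Lemma~5.1 is not needed here.
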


\begin{proof}
We set $a_{0,0}^h =\tilde a^h\circ w_{c,-}^{-1}$. 
Then by Lemma 5.2, we have 
\[
a^h_{-,1} :=\S (\tilde A -W_-^* Op^W(a^h_{0,0})W_-)\in S(h^\m, g_1^h),
\]
and it is supported in $\supp[\tilde a^h]$ modulo $O(h^\infty)$-terms. 
Then we set $a_{0,1}=a^h_{-,1}\circ w_{c,-}^{-1}$, and set
\[
a_{-,2}^h := \S(\tilde A-W_-^* Op^W(a_{0,0}^h+a_{0,1}^h) W_-)\in S(h^{2\m},g_1^h).
\]
We construct $a_{-,j}^h$, $j=2,3,\dots$, iteratively by 
\[
a_{-,j}^h := \S(\tilde A-W_-^* Op^W(a_{0,0}^h +\cdots + a_{0,j-1}^h) W_-)\in S(h^{j\m},g_1^h),
\]
$a_{0,j}^h=a_{-,j}^h\circ w_{c,-}^{-1}$, and we set 
$a_0^h\sim \sum_{j=0}^\infty a_{0,j}^h$ as an asymptotic sum. Then we have 
\[
\tilde A= W_-^* A_0 W_- 
\]
modulo $S(h^\infty\jap{r}^{-\infty}\jap{\o}^{-\infty},g_1)$-terms. 
We note, since there are no positive eigenvalues
(see \cite{IS}. See also \cite{MZ}), we also have $W_\pm f(P_f) W_\pm^*=f(P)$
by virtue of the intertwining property and the asymptotic completeness (\cite{IN2}).
These imply 
\[
W_-f(P_f) \tilde A f(P_f) W_-^* = W_- f(P_f) W_-^* A_0 W_- f(P_f) W_-^*
= f(P) A_0 f(P),
\] 
and this implies the assertion. 
\end{proof}

We note Lemma 5.3 naturally holds for $w_{c,+}$ instead of $w_{c,-}$, but we only 
use the above case. By Lemma 5.3, we learn 
\[
S f(P_f) \tilde Af(P_f)  S^* = W_+^* f(P) A_0 f(P) W_+ = f(P_f) (W_+^* A_0 W_+)f(P_f).  
\]
By Lemma~5.2, $W_+^* A_0 W_+$ is a pseudodifferential operator. 
By choosing $f\in C_0^\infty(\re_+)$ so that $f(\rho^2/2)=1$ in a neighborhood 
of the support of $\tilde a$, we may omit $f(P_f)$ factors up to negligible terms. 
Thus, $S \tilde A S^*$ is 
a pseudodifferential operator with the symbol supported in $s_c[\supp\tilde a^h]$, 
and the principal symbol is given by $\tilde a^h\circ s_c^{-1}$, where $\tilde a$ is 
the symbol given in Lemma~5.3, i.e., $\tilde a$ is supported in a small neighborhood 
of $(0,\rho_-,\th_-,\o_-)$. 

We note, by the intertwining property of the scattering operator, 
\[
e^{-itP_f} S = S e^{-itP_f}, \quad \forall t\in\re. 
\]
This in turn implies 
\[
T_\t S = S T_\t, \quad \forall\t\in\re, \quad\text{where } T_\t=\exp(-i\t\sqrt{2P_f}).
\]
On the other hand, $\sqrt{2P_f}=\mp i\tfrac{\pa}{\pa r}$ on $\mathcal{H}_{f,\pm}$, 
and hence $T_\t$ are translations with respect to $r$. More precisely, 
we have 
\[
T_\t u_\pm(r,\th)= u_\pm (r\mp \t,\th)
\quad \text{for }u_\pm \in\mathcal{H}_{f,\pm}.
\]
We learn from these that 
\[
S(T_\t\tilde A T_\t^* ) S^*
=T_\t (S\tilde A S^*) T_\t^*,
\]
and that the symbols of $T_\t \tilde A T_\t^*$ and $T_\t (S\tilde A S^*) T_\t^*$ are 
given by $\tilde a^h(r+\t,\rho,\th,\o)$ and $\S(S \tilde A S^*)(r+\t,\rho,\th,\o)$, respectively. 
Using this observation, we may replace $\tilde a$ by a symbol supported in a small 
neighborhood $(r_-,\rho_-,\th_-,\o_-)$ with arbitrary $r_-\in\re$. Thus we have proved: 

\begin{lem}
Let $a\in C_0^\infty(\re\times \re_-\times (T^*\pa M\setminus 0))$ be supported 
in a small neighborhood of $(r_-,\rho_-,\th_-,\o_-)$ with $|\rho_-|\geq 2\e$, and let
\[
\tilde A= Op^W(a^h), \quad a^h(r,\rho,\th,\o)=a(hr,\rho,\th,h\o).
\]
Then $S\tilde A S^*$ is a pseudodifferential operator with a symbol 
supported in $s_c[\supp a^h]$ modulo $O(h^\infty)$-terms, and 
the principal symbol (modulo $S(h^\m,g_1^h)$) is given by $a^h\circ s_c^{-1}$. 
\end{lem}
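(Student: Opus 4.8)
The plan is to assemble Lemma~5.4 from the three preceding lemmas together with the translation-invariance already exploited in their derivation. The only genuinely new point beyond Lemma~5.3 and the discussion immediately following it is to remove the normalization $\rho_0=0$ (equivalently $r_\pm=0$) that was imposed for convenience. First I would recall what is already in hand: by Lemma~5.2 (applied with $w_{c,+}$) the operator $W_+^* A_0 W_+$ is a pseudodifferential operator with symbol supported in $w_{c,+}^{-1}[\supp a_0^h]$ modulo $O(h^\infty)$ and principal symbol $a_0^h\circ w_{c,+}$; by Lemma~5.3 (with the symbol $\tilde a$ localized near a point $(0,\rho_-,\th_-,\o_-)$ with $\rho_->2\e$) we have $S f(P_f)\tilde A f(P_f) S^* = f(P_f)(W_+^* A_0 W_+) f(P_f)$, whence, choosing $f\in C_0^\infty(\re_+)$ equal to $1$ near the relevant energy, $S\tilde A S^*$ is a pseudodifferential operator with symbol supported in $s_c[\supp\tilde a^h]$ modulo $O(h^\infty)$ and principal symbol $\tilde a^h\circ s_c^{-1}$.

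Next I would invoke the intertwining property $e^{-itP_f}S = Se^{-itP_f}$, which yields $T_\t S = S T_\t$ for $T_\t = \exp(-i\t\sqrt{2P_f})$, and recall that $\sqrt{2P_f}=\mp i\pa_r$ on $\mathcal{H}_{f,\pm}$, so $T_\t$ acts on $\mathcal{H}_{f,\pm}$ as the translation $u_\pm(r,\th)\mapsto u_\pm(r\mp\t,\th)$. Consequently $S(T_\t \tilde A T_\t^*)S^* = T_\t(S\tilde A S^*)T_\t^*$, and the Weyl symbol of $T_\t \tilde A T_\t^*$ is $\tilde a^h(r+\t,\rho,\th,\o)$ while that of $T_\t(S\tilde A S^*)T_\t^*$ is $\S(S\tilde A S^*)(r+\t,\rho,\th,\o)$; since $s_c$ is homogeneous of order one in $(r,\o)$ and acts as $(r,\rho,\th,\o)\mapsto(-r,-\rho,\exp(\pi H_{\sqrt{2q}})(\th,\o))$, conjugation by $T_\t$ is compatible with the transformation law, so the statement for a symbol $\tilde a$ supported near $(0,\rho_-,\th_-,\o_-)$ transfers verbatim to one supported near $(r_-,\rho_-,\th_-,\o_-)$ for any $r_-\in\re$. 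A small-neighborhood patching argument (decompose a general $a$ supported near $(r_-,\rho_-,\th_-,\o_-)$ with $|\rho_-|\ge 2\e$ into finitely many pieces, each fitting the normalized hypothesis) then gives the full assertion.

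The main obstacle, such as it is, is bookkeeping: one must check that the composition of "modulo $O(h^\infty)$" support statements and the "modulo $S(h^\m,g_1^h)$" principal-symbol statements behaves correctly under conjugation by $T_\t$ and under the change of variables by $s_c$, i.e.\ that $s_c[\supp a^h]$ and $a^h\circ s_c^{-1}$ are the right answers and that the error classes $S(h^\m,g_1^h)$ and $S(h^\infty\jap{r}^{-\infty}\jap{\o}^{-\infty},g_1)$ are stable under these operations. This is routine given the explicit homogeneous form of $s_c$ and the fact (established in Section~2) that all derivatives of $w_{c,\pm}$ and $w_{c,\pm}^{-1}$ are uniformly controlled on the relevant compact sets, so that pullback by these diffeomorphisms preserves the symbol classes. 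The only other point requiring care is that $f(P_f)$ can be absorbed up to negligible terms, which follows by choosing $f$ identically $1$ on a neighborhood of the support of $\tilde a$ in the energy variable and using that $S$ commutes with $f(P_f)$ together with the pseudodifferential calculus. With these checks in place the statement of Lemma~5.4 follows.
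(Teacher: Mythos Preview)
Your proposal is correct and follows essentially the same route as the paper: the paper's ``proof'' of Lemma~5.4 is precisely the discussion between Lemma~5.3 and the statement of Lemma~5.4, namely combining Lemmas~5.2 and~5.3 to see that $S\tilde A S^*$ is a pseudodifferential operator with the stated symbol when $\tilde a$ is localized near $(0,\rho_-,\th_-,\o_-)$, absorbing the $f(P_f)$ factors by an appropriate choice of $f$, and then using the commutation $T_\t S = S T_\t$ to shift $r_-$ from $0$ to an arbitrary value. Your added remarks on bookkeeping (stability of the error classes under pullback by $s_c$ and under conjugation by $T_\t$) and on patching a general symbol from small pieces are reasonable elaborations of steps the paper leaves implicit.
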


Here we have used the formula: 
\[
s_c(r,\rho,\th,\o)= (-r,-\rho,\exp(\pi H_{\sqrt{2q}})(\th,\o)).
\]

We set $\hat{\mathcal{H}}_{f,\pm}= \mathcal{F}\mathcal{H}_{f,\pm}$. 
Then $\mathcal{F} S \mathcal{F}^{-1}$ is a unitary map from $\hat{\mathcal{H}}_{f,-}$ 
to $\hat{\mathcal{H}}_{f,+}$. 
For notational simplicity, we set
\[
\Pi u(r,\th)= u(-r,\th)\quad \text{for }u\in \mathcal{H}_{f,\pm}, 
\]
so that  $\mathcal{F} (S\Pi) \mathcal{F}^{-1}$ is a unitary map on $\hat{\mathcal{H}}_{f,+}$. 
By the intertwining property above, $\mathcal{F} (S\Pi) \mathcal{F}^{-1}$ commutes with 
functions of $\rho$, and hence is decomposed so that  
\[
\mathcal{F} (S\Pi) \mathcal{F}^{-1} u(\rho,\o) = (S(\rho^2/2)u(\rho,\cdot))(\o)
\quad \text{on } 
\hat{\mathcal{H}}_{f,+}\cong L^2(\re_+; L^2(\pa M)), 
\]
where $S(\l)\in B(L^2(\pa M))$ is the scattering matrix.

\begin{proof}[Proof of Theorem \ref{thm:WF}]
We recall the semiclassical type  characterization of the wave front set: 
Let $g(\rho,\th)\in \mathcal{D}'(\re_+\times \pa M)$, and let 
$(\rho_0,\th_0,r_0,\o_0)\in T^*(\re_+\times \pa M)$. 
$(\rho_0,\th_0,r_0,\o_0)\notin WF(g)$ if and only if there is 
$a\in C_0^\infty(T^*(\re_+\times \pa M))$ such that $a(\rho_0,\th_0,r_0,\o_0)\neq 0$ 
and 
\[
\bignorm{a(\rho,\th,h D_\rho,h D_\th)g} = O(h^\infty)\quad \text{as }h\to+0.
\]
We may replace $a$ by an $h$-dependent symbol with a principal symbol 
which does not vanish at $(\rho_0,\th_0,r_0,\o_0)$. 

We fix $\l_0=\rho_0^2/2$ with $\rho_0>2 \e$ and consider $S(\l)$ where 
$\l$ is in a small neighborhood of $\l_0$. 
Let $u\in L^2(\pa M)$ and let $v\in C_0^\infty(\re_+)$ supported in a small 
neighborhood of $\l_0$. Then it is easy to see 
\[
WF(v(\rho)u(\th))=\{(\rho,\th,0,\o)\,|\, \rho\in\supp v, (\th,\o)\in WF(u)\}.
\]
Then, by Lemma 5.4 and the above characterization of the wave front set, 
we learn 
\begin{align*}
&WF(\mathcal{F}(S\Pi)\mathcal{F}^{-1} v(\rho)u(\th))
= (1\otimes \exp(\pi H_{\sqrt{2q}}))WF(v(\rho)u(\th)) \\
&\qquad = \{(\rho,\th,0,\o)\,|\, \rho\in\supp v, (\th,\o)\in \exp(\pi H_{\sqrt{2q}})WF(u)\}
\end{align*}
(cf. \cite{Na1}). 
By the definition of the scattering matrix, this implies 
\[
WF(S(\l)u)\subset \exp(\pi H_{\sqrt{2q}})WF(u)
\]
for $\l\in \supp v$. Since this argument works for $S^{-1}$ also, the above 
inclusion is actually an equality, and we conclude Theorem~\ref{thm:WF}. 
\end{proof}

\begin{proof}[Proof of Theorem \ref{thm:MZ}]
Here we suppose $\m=1$. Then by Lemma 5.4 and the Beals-type characterization 
of FIOs (Appendix B, Theorem~B.1), we learn $\mathcal{F}(S\Pi)\mathcal{F}^{-1} $ is an FIO 
associated to $1\otimes \exp(\pi H_{\sqrt{2q}})$ on $\bigset{(\rho,\th,r,\o)}{\o\neq 0}$. 
Since $\mathcal{F}(S\Pi)\mathcal{F}^{-1}$ is decomposed to $\{S(\l)\}$, this implies 
$S(\l)$ are FIOs on $\pa M$ associated to the canonical transform 
$\exp(\pi H_{\sqrt{2q}})$ (cf. Appendix B, Proposition~B.4). 
\end{proof}


\section{Proof of Theorem \ref{thm:GFIO}}

Here we discuss how to generalize the proof of Theorem~\ref{thm:MZ} to conclude 
Theorem~\ref{thm:GFIO}. 

We first modify the Egorov theorem type argument in Section~4. 
Let $(r_0,\rho_0,\th_0,\o_0)\in T^*M_\infty$, $\o_0\neq 0$, and let $\O_0$ be a small 
neighborhood of $(r_0,\rho_0,\th_0,\o_0)$. We suppose $a\in C_0^\infty(T^*M_\infty)$ 
is supported in $\O_0$, and we consider the behavior of $A(t)$ as in Section~4. We set 
\[
w^*(t)= \exp(-itH_{p_f})\circ \exp(tH_{p}),
\]
which is well-defined for  $X\in T^*M_\infty$ as long as $\exp(tH_p)(X)\in T^*M_\infty$. 
By the discussion in the proof of Theorem~2.2, this condition is always 
satisfied if $X=(r,\rho,\th,\o)\in \O_0^h$ and $h$ is sufficiently small. 
We set 
\[
w(t)=w^*(t)^{-1} =\exp(-tH_p)\circ \exp(tH_{p_f})
\]
on the range of $w(t)$. We note 
\[
w^*_\pm= \lim_{t\to\pm\infty} w^*(t)
\]
on $\O_0^h$ with sufficiently small $h$, and 
\[
w_\pm= \lim_{t\to\pm\infty} w(t)
\]
on $w_{\pm}^{-1}[\O_0^h]$ with sufficiently small $h$. Convergence of these maps holds 
in the $C^\infty$-topology. 

We replace Lemma~4.1 by the following slightly different statement: 

\begin{lem}
There exists $b^h(t;r,\rho,\th,\o)\in C_0^\infty(T^*M_f)$ such that 
\begin{enumerate}
\renewcommand{\theenumi}{\roman{enumi}}
\renewcommand{\labelenumi}{{{\rm (\theenumi)} }}
\item $b^h(0)=a^h$. 
\item $b^h(t)$ is supported in $w^*(t)\bigbrac{\supp a^h}$. 
\item $b^h(t)\in S(1,g_1^h)$, and it is bounded uniformly in $t\in\re$. 
\item $b^h(t)-a^h\circ w(t)\in S(h,g_1^h)$, i.e., the principal symbol of $b^h(t)$ 
is given by $a^h\circ w(t)$, and the remainder is bounded uniformly in $t$. 
\item If we set $B(t)=Op^W(b^h(t))$, then 
\[
\Bignorm{\frac{d}{dt}B(t)+\frac{i}{h}[L(t),B(t)]} \leq C_N \jap{t}^{-1-\m} h^N, \quad h>0, 
\]
with any $N$.
\item $B(t)$ converges to $B_\pm$ as $t\to\pm\infty$ in $B(\mathcal{H}_f)$, and 
the symbols : $b^h_\pm :=\S(B_\pm)$ satisfy 
\[
b^h_\pm-a^h\circ w_\pm \in S(h,g_1^h).
\]
\end{enumerate}
\end{lem}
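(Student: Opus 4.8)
The plan is to reproduce the construction of Lemma~4.1, but now using the \emph{full} classical flow $w^*(t)=\exp(-itH_{p_f})\circ\exp(tH_p)$ in place of its conic truncation $w_c(t/h)$. The only new ingredient is the estimate~\eqref{est:vh} on the perturbation $v^h=p^h-p_c$, which shows that the correction terms picked up along the transport equations are $O(h^\m)$ rather than $O(h)$; this forces the refined claim (iv) with $S(h,g_1^h)$ replaced at the first step by $S(h^\m,g_1^h)$, and the iteration then fills in all lower orders. First I would observe that, by the calculation of Section~3, $P=Op^W(p)$ and $P_f=Op^W(p_f)$, so that $L(t)=e^{itP_f/h}\I^*T(t)e^{-itP_f/h}$ has, modulo the cutoff $Y$, Weyl symbol $\S(L(t))(r,\rho,\th,\o)=\S(\I^*T(t))(r+(t/h)\rho,\rho,\th,\o)$, as already recorded in Section~4; the estimates \eqref{eq:es1}, \eqref{eq:es2}, \eqref{eq:es3} are exactly the inputs needed.

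Next I would set $b_0(t)=a\circ w^*(t)$, which by definition of $w^*(t)$ solves the exact transport equation $\partial_t b_0(t)=-\{\,p^h\circ(\text{flow})\,,\,b_0(t)\,\}$; but the principal symbol of $L(ht)$ is not $p^h$, it is the conic model term $\ell_0(t)=q(\th,\o)/(r+t\rho)^2$. The discrepancy between $\{\ell_0(t),\cdot\}$ and the true transport generator is controlled by \eqref{eq:es3}: on $\supp(a^h\circ w(t))$ it is $O(\jap{t}^{-1-\m}h^{\m})$ with the appropriate gains in $r$ and $\o$. Hence, writing $B_0(t)=Op^W(b_0^h(t))$ with $b_0^h(t;r,\rho,\th,\o)=b_0(t/h;hr,\rho,\th,h\o)$ and $R_0^0(t)=\tfrac{d}{dt}B_0(t)+\tfrac{i}{h}[L(t),B_0(t)]$, the symbol calculus gives $r_0^0(t)=\S(R_0^0(t))\in S(h^\m\jap{t}^{-1-\m},g_1^h)$, supported in $w^*(t)[\supp a^h]$ modulo $O(h^\infty)$. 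I then solve the linear transport equation $\partial_t b_1(t)+\{\ell_0(t),b_1(t)\}=-\tilde r_0^0(t)$ with $b_1(0)=0$; integrating along the conic flow $\exp(sH_{p_c})$ and using the uniform bounds on its derivatives from Section~2 together with the $\jap{t}^{-1-\m}$ decay, one gets $b_1(t)\in S(h^\m,g_1^h)$ uniformly in $t$ and convergent in $C_0^\infty$ as $t\to\pm\infty$. Iterating, $b_j^h\in S(h^{j\m},g_1^h)$, and the asymptotic sum $b^h(t)\sim\sum_j b_j^h(t)$ gives a symbol satisfying (i)--(v); claim (vi) follows from the uniform $C^\infty$ convergence $w^*(t)\to w^*_\pm$ (Theorem~2.1) combined with (v) integrated in $t$.

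The main obstacle I anticipate is not the formal construction — that is a routine adaptation of the scheme in \cite{Ta} Chapter~8 — but rather checking that the symbol classes are stable under the full (non-homogeneous) flow $w^*(t)$ and that the remainder estimates remain uniform in $t\in\re$. Concretely: one must verify that $a^h\circ w(t)$ and all the iterates $b_j^h(t)$ lie in $S(h^{j\m},g_1^h)$ with constants \emph{independent of $t$}, which relies on the uniform bounds on the derivatives of $w(t)$ and $w(t)^{-1}$ — available from Theorem~2.1(ii)--(iii) and the uniform boundedness of derivatives of the conic flow on the relevant compact sets — and on the fact, from Step~1 of the proof of Theorem~2.2, that $c_1\jap{t}\le r^h(t)\le c_2\jap{t}$, so that the weight factors $\jap{r}^{-\m}$ along the trajectory genuinely contribute $\jap{t}^{-\m}$ decay. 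The cutoff $Y=\y(hr/\e\jap{t})$ is what keeps everything supported in the region where the classical flow is well-defined and these estimates apply; care is needed to see that differentiating $Y$ produces only harmless $O(h/\jap{t})$ contributions concentrated where the symbol is already negligible, exactly as in Section~4. Once this uniformity is in place, the rest of Lemma~6.1 is identical in form to Lemma~4.1 with $\mu$-losses in the power of $h$ at each step.
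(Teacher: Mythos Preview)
Your construction does not prove claim (iv) as stated. The lemma asserts $b^h(t)-a^h\circ w(t)\in S(h,g_1^h)$, i.e.\ a full power of $h$, whereas your scheme only yields $S(h^\m,g_1^h)$. You recognize this yourself (``the refined claim (iv) with $S(h,g_1^h)$ replaced at the first step by $S(h^\m,g_1^h)$''), but the phrase ``the iteration then fills in all lower orders'' does not repair it: if $b_1^h\in S(h^\m,g_1^h)$ then $b^h-b_0^h=b_1^h+b_2^h+\cdots$ lies in $S(h^\m,g_1^h)$ and no better, since $b_1^h$ itself is the dominant correction. The gain to $O(h)$ is exactly what distinguishes Lemma~6.1 from Lemma~4.1 and is what is needed downstream for the Beals-type characterization (Corollary~B.2 requires $O(h)$, not $O(h^\m)$).

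The source of the loss is your choice of transport generator. You solve the hierarchy with $\{\ell_0(t),\cdot\}$, the \emph{conic} principal part, and then book the discrepancy $\ell(t)-\ell_0(t)$ (controlled by \eqref{eq:es3}) as part of the remainder; that discrepancy is $O(h^\m)$ and contaminates every step. The paper instead uses the \emph{full} symbol $\ell(t)=\S(L(t))$ as the transport generator. The point is that on $\supp b_0^h$ (where $Y\equiv 1$ and the $\I$-corrections are negligible) $\ell(t)$ is exactly $(p-p_f)$ pulled back by the free flow, so $b_0^h=a^h\circ w(t)$ satisfies
\[
\partial_t b_0^h(t)+h^{-1}\{\ell(t),b_0^h(t)\}=0
\]
\emph{identically}, not merely modulo $O(h^\m)$. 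Consequently the first remainder $r_0^0(t)=\S\bigl(\tfrac{d}{dt}B_0(t)+\tfrac{i}{h}[L(t),B_0(t)]\bigr)$ consists solely of the sub-sub-principal Moyal terms in the commutator expansion; since each bidifferential pair in the Moyal product gains a factor $h$ in the metric $g_1^h$, this remainder is $O(h)$ (with the time decay $\jap{t}^{-1-\m}$ coming from the estimates \eqref{eq:es1} on derivatives of $\ell$). One then iterates with the transport equations
\[
\partial_t b_j^h(t)+h^{-1}\{\ell(t),b_j^h(t)\}=-r_j^h(t),
\]
again with the full $\ell(t)$, obtaining $b_j^h\in S(h^j,g_1^h)$ uniformly in $t$. (A minor point: your principal term should be $b_0^h=a^h\circ w(t)$, not $a\circ w^*(t)$; otherwise the support is $w(t)[\supp a^h]$ rather than $w^*(t)[\supp a^h]$.)
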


We note that $w(t)$ is not homogeneous in $(r,\o)$-variables, but very close to 
a homogeneous map when $|(r,\o)|$ is very large thanks to Theorem~2.2. 

In order to prove Lemma~6.1, we set 
\[
b_0^h(t)=a^h\circ w(t) =a\circ \exp(-tH_p)\circ \exp(tH_{p_f}),
\]
which is supported in $w^*(t)[\O_0^h]$. We have $b_0^h(t)\in S(1,g_1^h)$ uniformly in $t$ 
(for small $h$) again by Theorem~2.2. Moreover, $b_0^h$ satisfies 
\[
\frac{\pa}{\pa t} b_0^h(t) =-h^{-1}\{\ell(t), b_0^h(t)\}
\]
where $\ell(t)=\S(L(t))$. Hence the first remainder term $r_0^0(t)$ (as defined in Section~4)
satisfies 
\[
\bigabs{\pa_r^\a\pa_\rho^\b\pa_\th^\c\pa_\o^\d r^0_0(t)}\leq C\jap{t}^{-1-\m-|\a|}
h^{1+|\a|+|\d|}
\]
with any indices  $\a,\b,\c,\d$. Then we construct the asymptotic solution as in the proof of 
Lemma~4.1 by solving transport equations:
\[
\frac{\pa}{\pa t} b_j^h(t) +h^{-1} \{ \ell(t), b_j^h(t)\} = -r_j^h(t), \quad j=0,1,2,\dots, 
\]
and we conclude Lemma~6.1.\qed 

Lemma~4.2 holds when the construction of  $B(t)$ is replaced as above, with no modifications. 
Lemmas~5.2 and 5.3 holds in the following form. The proofs are the same. 

\begin{lem}
Let $A_0$ as above. Then $W_\pm^* A_0 W_\pm$ are pseudodifferential operators with 
the symbols $b^h_\pm$ given in Lemma~6.1. In particular, $\S(W_\pm^* A_0 W_\pm)$ 
are supported in $w_{\pm}^{-1}[\supp a^h]$ modulo $O(h^\infty)$-terms, and the 
principal symbols (modulo $S(h,g_1^h)$) are given by $a^h\circ w_{\pm}$. 
\end{lem}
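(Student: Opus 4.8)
The plan is to mimic the proof of Lemma~5.2 verbatim, since Lemma~4.2 (the statement that $A(t)$ is close to $B(t)$) and Lemma~5.1 (the identification of the weak limits of $\y(P_f/\d)A(t)\y(P_f/\d)$ with $\y(P_f/\d)W_\pm^*A_0W_\pm\y(P_f/\d)$) continue to hold with $B(t)$ replaced by the operator constructed in Lemma~6.1. First I would check that the proof of Lemma~5.1 goes through unchanged: the only input is a stationary phase argument showing $\slim_{t\to\pm\infty}(1-Y)\mathcal{I}e^{-itP_f/h}\y(P_f/\d)=0$ together with $\slim_{t\to\pm\infty}e^{itP/h}Y\mathcal{I}e^{-itP_f/h}\y(P_f/\d)=W_\pm\y(P_f/\d)$, neither of which refers to the internal structure of $B(t)$. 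Likewise the proof of Lemma~4.2 only uses properties (ii), (v) of Lemma~6.1 — the support localization of $b^h(t)$ and the Heisenberg-equation error bound — both of which are now asserted in Lemma~6.1; the cancellation of the $R_3(t)$ terms still comes from the geometric separation of $\supp b^h(t)$ from $\supp\big((1-Y\I\I^*)T(t)\big)$ by a distance $\gtrsim\jap{t}h^{-1}$, which is unaffected by the change from $w_c$ to $w$.

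Given those two lemmas, the argument is as follows. Fix $\d>2\e^2$ and choose $A_0=Op^W(a^h)$ with $a$ supported near $(r_0,\rho_0,\th_0,\o_0)$, $\o_0\neq 0$, and with the associated trajectory escaping the region $\{r\le\e\jap{t}\}$ for all $t$ (as arranged in Section~4). By Lemma~4.2, $A(t)$ and $B(t)=Op^W(b^h(t))$ differ by $O(h^N)$ uniformly in $t$, so $A_\pm:=\wlim_{t\to\pm\infty}A(t)$ have the symbols $b^h_\pm=\S(B_\pm)$ as pseudodifferential operators, and by Lemma~6.1 (vi) these satisfy $b^h_\pm-a^h\circ w_\pm\in S(h,g_1^h)$ and are supported in $w^*_\pm[\supp a^h]=w_\pm^{-1}[\supp a^h]$ modulo $O(h^\infty)$. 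On the other hand, Lemma~5.1 (whose proof is unchanged) gives $\y(P_f/\d)W_\pm^*A_0W_\pm\y(P_f/\d)=\wlim_{t\to\pm\infty}\y(P_f/\d)A(t)\y(P_f/\d)=\y(P_f/\d)A_\pm\y(P_f/\d)$. Choosing $\d$ (equivalently $\e$) appropriately for the given energy support and absorbing the cutoffs $\y(P_f/\d)$ — which act as the identity near $\supp a^h$ up to negligible terms — one concludes that $W_\pm^*A_0W_\pm$ is a pseudodifferential operator with symbol $b^h_\pm$, hence with principal symbol $a^h\circ w_\pm$ (modulo $S(h,g_1^h)$) and support in $w_\pm^{-1}[\supp a^h]$ modulo $O(h^\infty)$. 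This is exactly the assertion of Lemma~6.2.

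The points requiring care — though none is a genuine obstacle — are: (1) verifying that the \emph{non-homogeneous} canonical maps $w_\pm$ still produce symbols in the class $S(1,g_1^h)$ uniformly in $h$, which is supplied by Theorem~2.2 (the maps $w(t)$ and their limits differ from the homogeneous $w_c(t)$, $w_{c,\pm}$ by $O(h^\m)$ with all derivatives controlled on $\O_0^h$, so composition with $a^h$ stays in the symbol class); (2) ensuring the symbol calculus used to propagate and correct the asymptotic solution in Lemma~6.1 is valid for the metric $g_1^h$ and the $h$-dependent weights, which is the content of Section~3; and (3) checking that the intertwining relation $W_\pm f(P_f)W_\pm^*=f(P)$, used to insert and remove spectral cutoffs, remains available — it does, by the absence of positive eigenvalues and asymptotic completeness as cited in the proof of Lemma~5.3. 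Since the proofs of Lemmas~5.2 and 5.3 are textually identical once $w_{c,\pm}$ is replaced by $w_\pm$ and $S(h^\m,g_1^h)$ by $S(h,g_1^h)$ in the appropriate places (the gain per iteration is now $h$ rather than $h^\m$ because $v^h=O(h^\m)$ but the correction terms in the transport hierarchy pick up a full power of $h$ from the calculus), no new estimates are needed; the work is entirely bookkeeping, which is why the excerpt states ``The proofs are the same.''
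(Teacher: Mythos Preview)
Your proposal is correct and matches the paper's approach exactly: the paper itself gives no separate proof for Lemma~6.2, merely stating ``The proofs are the same'' after noting that Lemma~4.2 holds with the new $B(t)$ ``with no modifications.'' Your elaboration of why Lemmas~5.1 and~4.2 carry over unchanged, and your identification of the bookkeeping points (symbol class membership via Theorem~2.2, removal of the $\y(P_f/\d)$ cutoffs), is precisely the verification the paper leaves to the reader.
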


\begin{lem}
Let $\tilde a\in C_0^\infty(\re\times \re_-\times (T^*\pa M\setminus 0))$ be supported in 
a small neighborhood of $(0,\rho_-,\th_-,\o_-)$, and let 
\[
\tilde A= Op^W(\tilde a^h), \quad \tilde a^h(r,\rho,\th,\o)=\tilde a(hr,\rho,\th,h\o).
\]
Then $W_-\tilde A W_-^*$ is a pseudodifferential operator with a symbol supported in 
$w_{-}[\supp \tilde a^h]$, and the principal symbol (modulo $S(h,g_1^h)$) is 
give by $\tilde a^h\circ w_{-}^{*}$. 
\end{lem}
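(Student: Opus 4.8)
The plan is to mimic the proof of Lemma~5.3 from Section~5, since Lemma~6.3 is its exact analogue with $w_{c,-}$ replaced by the (not necessarily homogeneous) classical wave operator $w_-$, and the remainder class $S(h^\m,g_1^h)$ replaced by $S(h,g_1^h)$. First I would set $\tilde a^h_{0,0} = \tilde a^h \circ w_-^*$, which is supported in $w_-[\supp \tilde a^h]$ (since $w_-^*$ and $w_-$ are mutually inverse diffeomorphisms, available on the relevant set for small $h$ by Theorem~2.2 and the remarks following its proof), and apply Lemma~6.2 to $A_{0,0}=Op^W(\tilde a^h_{0,0})$. By Lemma~6.2, $W_-^* A_{0,0} W_-$ is a pseudodifferential operator whose symbol agrees with $\tilde a^h_{0,0}\circ w_- = \tilde a^h$ modulo $S(h,g_1^h)$, so the first correction $\tilde a^h_{-,1} := \S(\tilde A - W_-^* A_{0,0} W_-)$ lies in $S(h,g_1^h)$ and is supported in $\supp \tilde a^h$ modulo $O(h^\infty)$.

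Next I would iterate exactly as in Lemma~5.3: set $\tilde a^h_{0,j} = \tilde a^h_{-,j}\circ w_-^*$ and define
\[
\tilde a^h_{-,j+1} := \S\bigpare{\tilde A - W_-^* Op^W(\tilde a^h_{0,0}+\dots+\tilde a^h_{0,j}) W_-} \in S(h^{j+1},g_1^h),
\]
using Lemma~6.2 at each stage to see the error improves by one power of $h$. Forming the asymptotic sum $a^h_0 \sim \sum_{j\geq 0}\tilde a^h_{0,j}$ (legitimate since the $j$-th term is $O(h^j)$ in the appropriate symbol seminorms, all supported in a fixed compact set in the rescaled variables), we get $\tilde A = W_-^* Op^W(a^h_0) W_-$ modulo $S(h^\infty\jap{r}^{-\infty}\jap{\o}^{-\infty},g_1)$-terms. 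Then, exactly as in Section~5, conjugating by $W_-$ and using $W_- f(P_f) W_-^* = f(P)$ (valid since $P$ has no positive eigenvalues, by \cite{IS}, \cite{MZ}, together with asymptotic completeness) to absorb the cutoffs $f(P_f)$, one obtains that $W_- \tilde A W_-^*$ is a pseudodifferential operator with symbol supported in $w_-[\supp \tilde a^h]$ and principal symbol $\tilde a^h \circ w_-^*$ modulo $S(h,g_1^h)$. The statement as written in Lemma~6.3 is precisely $W_-\tilde A W_-^* = Op^W(a^h_0)$, read off from this identity.

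The one genuinely new point compared to Section~5 — and the step I expect to be the main obstacle — is that $w_-$ and $w_-^*$ are no longer homogeneous of degree one in $(r,\o)$, so one must verify that the compositions $\tilde a^h \circ w_-^*$, $\tilde a^h_{-,j}\circ w_-^*$, etc., still belong to the semiclassical symbol classes $S(h^{j},g_1^h)$ with uniform-in-$t$ (and uniform-in-$h$) seminorm bounds. This is exactly where Theorem~2.2 is used: parts (ii) and (iii) of that theorem, together with the homogeneity and derivative bounds on $w_{c,\pm}$ recorded in Section~2.2, give that all derivatives of $w_-^*$ differ from those of the homogeneous $w^*_{c,-}$ by controlled amounts on the rescaled set $\O_0$, so the symbol calculus goes through with the error class $S(h,g_1^h)$ rather than $S(h^\m,g_1^h)$. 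Once this compatibility of $w_-^*$ with the metric $g_1^h$ is in hand, the rest of the argument is formally identical to the proof of Lemma~5.3, and I would simply remark that "the proof is the same," as the paper already does.
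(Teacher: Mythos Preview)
Your proposal is correct and matches the paper's approach exactly: the paper simply states that ``the proofs are the same'' as for Lemmas~5.2 and 5.3, and you have faithfully reproduced the iteration of Lemma~5.3 with $w_{c,-}$ replaced by $w_-$, correctly identifying Theorem~2.2 as the tool that compensates for the loss of homogeneity. Your observation that the remainder class improves to $S(h,g_1^h)$ (rather than $S(h^\m,g_1^h)$) because the transport is now along the exact flow $w(t)$ is precisely the content of Lemma~6.1(iv) versus Lemma~4.1(iv), and the passage from $\tilde A = W_-^* A_0 W_-$ to $W_-\tilde A W_-^*$ via $W_- f(P_f) W_-^* = f(P)$ is the same closing step as in Section~5.
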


Combining these, we learn (as in Section~5) the following assertion. 

\begin{lem}
Let $a\in C_0^\infty(\re\times \re_-\times (T^*\pa M\setminus 0))$ be supported 
in a small neighborhood of $(r_-,\rho_-,\th_-,\o_-)$ with $|\rho_-|\geq 2\e$, $\o_-\neq 0$, and let
\[
\tilde A= Op^W(a^h), \quad a^h(r,\rho,\th,\o)=a(hr,\rho,\th,h\o).
\]
Then $S\tilde A S^*$ is a pseudodifferential operator with a symbol 
supported in $s[\supp a^h]$ modulo $O(h^\infty)$-terms, and 
the principal symbol (modulo $S(h,g_1^h)$) is given by $a^h\circ s^{-1}$. 
\end{lem}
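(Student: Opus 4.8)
The plan is to obtain Lemma~6.5 from Lemmas~6.3 and 6.4 by repeating, essentially verbatim, the computation of Section~5 that passes from Lemma~5.3 to Lemma~5.4 --- now with the remainders measured in $S(h,g_1^h)$ instead of $S(h^\m,g_1^h)$, and with the (only asymptotically homogeneous) maps $w_\pm$, $w_\pm^*$ and $s$ in the roles of $w_{c,\pm}$ and $s_c$. The algebraic skeleton is the factorization
\[
S\tilde A S^* = W_+^*\bigpare{W_-\tilde A W_-^*}W_+ ,
\]
coming from $S=W_+^*W_-$, to which I would apply Lemma~6.4 on the inner conjugation and Lemma~6.3 on the outer one.

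First I would dispose of the case where $\supp a$ is centred at a point with $r$-coordinate $0$. By Lemma~6.4 the operator $A_0:=W_-\tilde A W_-^*$ is a pseudodifferential operator whose symbol is supported in $w_-\bigbrac{\supp a^h}$ modulo $O(h^\infty)$ and whose principal symbol, modulo $S(h,g_1^h)$, is $a^h\circ w_-^*$ (the energy localizations $f(P_f)$, $f(P)$ and the relation $W_\pm f(P_f)W_\pm^*=f(P)$ used as in Section~5 are already absorbed into this statement). Feeding $A_0$ into Lemma~6.3, the operator $W_+^*A_0W_+$ is a pseudodifferential operator whose symbol is supported in $w_+^{-1}\bigbrac{w_-\bigbrac{\supp a^h}}$ modulo $O(h^\infty)$ with principal symbol $\bigpare{a^h\circ w_-^*}\circ w_+$. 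The one new bookkeeping step is to use $w_\pm=(w_\pm^*)^{-1}$ and $s=w_+^*\circ w_-$, which give $w_+^{-1}\circ w_-=s$ and $w_-^*\circ w_+=s^{-1}$; hence $S\tilde A S^*$ is a pseudodifferential operator with symbol supported in $s\bigbrac{\supp a^h}$ and principal symbol $a^h\circ s^{-1}$, as claimed in this special case.

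To allow an arbitrary centre $(r_-,\rho_-,\th_-,\o_-)$ I would translate in $r$ exactly as in Section~5: from $P_fS=SP_f$ one gets $T_\t S=ST_\t$ with $T_\t=\exp(-i\t\sqrt{2P_f})$, which acts on $\mathcal{H}_{f,\pm}$ as $u(r,\th)\mapsto u(r\mp\t,\th)$, so conjugating by $T_\t$ shifts the symbols of $\tilde A$ and of $S\tilde A S^*$ in $r$ compatibly with $s$, reducing the general case to the one already treated. I would also remark that the hypothesis $|\rho_-|\geq 2\e$ is precisely what forces the trajectories issuing from $\supp a^h$ to enter eventually the region $\{Y=1\}$, so that the modified Egorov construction underlying Lemmas~6.1--6.4 applies to them.

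Granting Lemmas~6.1--6.4, the argument above is purely formal, so the real difficulty is not here at all but upstream, in Lemma~6.1: one must check that the transported symbol $a^h\circ w(t)$ and every iterated transport-equation remainder stay in $S(1,g_1^h)$, respectively $S(h^j,g_1^h)$, uniformly for $t\in\re$, even though $w(t)$ and $w_\pm$ are no longer homogeneous in $(r,\o)$ but only asymptotically conic. This is where Theorem~2.2 (closeness of the true flow to the conic flow, with $h^\m$ gains) and the local decay estimates of Appendix~A are the decisive inputs, and I would expect the uniformity in $t$ of those symbol bounds, together with the $O\bigpare{\jap{t}^{-N}h^N}$ control of the cut-off errors $R_1,R_2,R_3$, to be the technically demanding point of a from-scratch derivation.
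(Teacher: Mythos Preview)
Your proposal is correct and matches the paper's approach exactly: the paper's entire ``proof'' of this lemma is the single sentence ``Combining these, we learn (as in Section~5) the following assertion,'' and what you have written out is precisely that Section~5 computation --- the factorization $S\tilde A S^* = W_+^*(W_-\tilde A W_-^*)W_+$, the successive application of the two conjugation lemmas, removal of the $f(P_f)$ cut-offs, and the $r$-translation via $T_\tau$ to reach a general centre. One minor bookkeeping point: your labels ``Lemma~6.3,'' ``Lemma~6.4,'' ``Lemma~6.5'' are each shifted by one relative to the paper's numbering (the statement in question is Lemma~6.4, resting on Lemmas~6.2 and~6.3).
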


In the following, we consider $(r,\rho,\th,\o)\in \O_0^h$ with some $\O_0$ and 
sufficiently small $h$, or equivalently, when $|\o|$ is sufficiently large. 
By the conservation of energy (or equivalently, by the invariance in the shift in $r$), 
the classical scattering operator has the following form:
\begin{equation}\label{eq:cl-sc}
s(r,\rho,\th,\o) = (-r+g(\rho,\th,\o), -\rho, s(\l)(\th,\o)),
\end{equation}
where $\l=\rho^2/2$ and $s(\l)$ is a canonical transform on $T^*\pa M$ for each $\l>0$. 
(We note that without $g(\rho,\th,\o)$, the map $s$ is not necessarily canonical.) 
Moreover, by Theorem~2.1, we have for any indices $\a,\b,\c$, 
\begin{align*}
&\bigabs{\pa_\rho^\a\pa_\th^\b\pa_\o^\c g(\rho,\th,\o)}\leq C h^{-1+\m+|\c|}, \\
&\bigabs{\pa_\rho^\a\pa_\th^\b\pa_\o^\c s_1(\rho,\th,\o)}\leq C h^{\m+|\c|}, \\
&\bigabs{\pa_\rho^\a\pa_\th^\b\pa_\o^\c s_2(\rho,\th,\o)}\leq C h^{-1+\m+|\c|}, 
\end{align*}
on $\O_0^h$, where $\O_0$ is a small neighborhood of $(0,\rho_-,\th_-,\o_-)$, and 
$s_1, s_2$ are defined by 
\[
(s_1(\rho,\th,\o),s_2(\rho,\th,\o))= s(\l)(\th,\o)-\exp(\pi H_{\sqrt{2q}})(\th,\o),
\]
i.e., $s_1$ denotes the $\th$-components of the RHS terms, and $s_2$ denotes 
the $\o$-components. 
These estimates imply $s$ is an asymptotically homogeneous (in $(r,\o)$-variables) 
in the sense of \cite{IN3}  Section~4. 

In general, an operator $U$ with the distribution kernel $u$ is called an FIO of order $m$ 
associated to an asymptotically homogeneous canonical transform $S$ if $u$ is a 
Lagrangian distribution associated to 
\[
\S_S:=\bigset{(x,y,\x,-\y)}{(x,\x)=S(y,\y)}, 
\]
i.e., for any $a_1,\dots, a_N\in S^1_{cl}$ such that $a_j$ vanishes on $\S_S$ for each $j$, 
$Op(a_1)\cdots Op(a_N)u\in B_{2,\infty}^{-m-n/2,\infty}(\re^{2n})$ (\cite{IN3}). 
The Beals type characterization of FIOs discussed in Appendix~B hold for such FIOs 
without any change. 

By Lemma~6.4 and the analogue of Corollary~B.2, we learn $S$ is an FIO associated 
to the classical scattering map $s$. Moreover, by Proposition~B.4, we learn that 
the scattering matrix $S(\l)$ is an FIO associated to $s(\l)$, where $s(\l)$ is defined 
by \eqref{eq:cl-sc} and it is asymptotic to $\exp(\pi H_{\sqrt{2q}})$. Thus we have proved 
the following slightly more precise version of Theorem~1.3:

\begin{thm}
Suppose Assumption~A. Then for each $\l>0$, $S(\l)$ is an FIO associated 
to $s(\l)$ defined by \eqref{eq:cl-sc}. The canonical map $s(\l)$ on $T^*\pa M$ 
is asymptotically homogeneous in $\o$, asymptotic to $\exp(\pi H_{\sqrt{2q}})$ 
with the error of $O(|\o|^{1-\m})$. 
\end{thm}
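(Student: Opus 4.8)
The plan is to essentially repeat the argument used to prove Theorem~\ref{thm:MZ}, but everywhere tracking the non-homogeneous classical scattering map $s$ rather than the homogeneous conic map $s_c$. First I would set up the semiclassical quantization $\tilde A=Op^W(a^h)$ with $a^h(r,\rho,\th,\o)=a(hr,\rho,\th,h\o)$ for $a$ supported in a small neighborhood $\O_0$ of a point $(r_-,\rho_-,\th_-,\o_-)$ with $|\rho_-|\geq 2\e$ and $\o_-\neq 0$, so that the classical flow $\exp(tH_p)$ stays in $T^*M_\infty$ on $\O_0^h$ for small $h$ by Theorem~2.2. Then I would invoke Lemma~6.4: $S\tilde A S^*$ is a pseudodifferential operator whose symbol is supported in $s[\supp a^h]$ modulo $O(h^\infty)$, with principal symbol $a^h\circ s^{-1}$ modulo $S(h,g_1^h)$; this is the Egorov-type input that drives everything.

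Next I would apply the Beals-type characterization of FIOs from Appendix~B, in the version stated in Section~6 for FIOs associated to asymptotically homogeneous canonical transforms. Using the explicit form \eqref{eq:cl-sc} of the classical scattering operator, $s(r,\rho,\th,\o)=(-r+g(\rho,\th,\o),-\rho,s(\l)(\th,\o))$ with $\l=\rho^2/2$, together with the derivative bounds on $g$, $s_1$, $s_2$ coming from Theorem~2.1, I would verify that $s$ is indeed asymptotically homogeneous in the $(r,\o)$-variables in the sense of \cite{IN3}, with the error relative to $(-r,-\rho,\exp(\pi H_{\sqrt{2q}})(\th,\o))$ being $O(h^{-1+\m})$, i.e. $O(|\o|^{1-\m})$ after undoing the rescaling. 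Then Lemma~6.4 plus the analogue of Corollary~B.2 gives that $S$ (more precisely $\mathcal{F}(S\Pi)\mathcal{F}^{-1}$, conjugated to account for the $\rho\mapsto-\rho$ reflection as in Section~5) is an FIO associated to $s$. Finally, since $\mathcal{F}(S\Pi)\mathcal{F}^{-1}$ commutes with functions of $\rho$ and hence decomposes fiberwise into $\{S(\l)\}$, Proposition~B.4 (the fiber-decomposition statement for FIOs) yields that each $S(\l)$ is an FIO on $T^*\pa M$ associated to $s(\l)$.

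The main obstacle I expect is the verification that $s$ falls under the generalized-FIO framework of \cite{IN3}, specifically confirming that the non-homogeneous perturbation $g(\rho,\th,\o)$ in the $r$-component and the perturbations $s_1,s_2$ in the boundary canonical transform have exactly the decay rates required by the definition of asymptotically homogeneous canonical transform, and that the Beals-type characterization and Proposition~B.4 genuinely carry over verbatim to this class --- all of which rests on the quantitative estimates in Theorem~2.1 holding uniformly on $\O_0^h$ as $h\to0$. The rest (the Egorov construction, the removal of the cutoffs $f(P_f)$, the passage from $S$ to the fibers $S(\l)$) is a routine transcription of Section~5 with $S(h^\m,\cdot)$ replaced by $S(h,\cdot)$ in the principal-symbol statements and the homogeneous map replaced by its asymptotically homogeneous counterpart.
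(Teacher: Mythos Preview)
Your proposal is correct and follows essentially the same route as the paper: invoke Lemma~6.4 for the Egorov-type input, use the form \eqref{eq:cl-sc} together with the derivative bounds from Theorem~2.1 to check that $s$ is asymptotically homogeneous in the sense of \cite{IN3}, apply the analogue of Corollary~B.2 to conclude that $S$ is an FIO associated to $s$, and then use Proposition~B.4 to pass to the fibers $S(\l)$. Your identification of the key verification step (that the framework of \cite{IN3} applies) and of the replacement $S(h^\m,\cdot)\to S(h,\cdot)$ in the principal-symbol statements matches the paper exactly.
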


\appendix
\section{Local decay estimates}
Let $P$  be as in Section~1. 
For a symbol $a$, we denote $a^h(r,\rho,\th,\o)= a(hr,\rho,\th,h\o)$. 
Then we have the following:

\begin{thm}
Let $(r_0,\rho_0,\th_0,\o_0)\in T^*M_\infty\cong T^*\re_+\times T^*\pa M$, 
and suppose $\o_0\neq 0$. We denote the $\e$-neighborhood of 
$(r_0,\rho_0,\th_0,\o_0)$ by $\O_\e$. 
We suppose $\e>0$ so small that $\O_{2\e}\Subset T^*\re_+\times (T^*\pa M\setminus 0)$. 
If $a\in C_0^\infty(T^*M_\infty)$ is real-valued, and supported in 
$\O_\e$,  then there is an $h$-dependent symbol: 
$b(t)\in C_0^\infty(T^*M_\infty)$ for any $t\in\re$ such that 
\begin{enumerate}
\renewcommand{\theenumi}{\roman{enumi}}
\renewcommand{\labelenumi}{{{\rm (\theenumi)} }}
\item $|a(r,\rho,\th,\o)|\leq c_1 b(0;r,\rho,\th,\o)$ with some $c_1>0$. 
\item $b(t)$ is supported in $\O(t):=\exp(tH_{p_c})[\O_{2\e}]$ for $t\in\re$. 
\item For any indices $\a,\b,\c$ and $\d$, there is $C_{\a\b\c\d}>0$ such that 
\[
\bigabs{\pa_r^\a\pa_\rho^\b \pa_\th^\c \pa_\o^\d b(t,r,\rho,\th,\o)}
\leq C_{\a\b\c\d}, \quad (r,\rho,\th,\o)\in T^*M_\infty, t\in\re.
\]
\item There is $R(t)\in B(L^2(M))$ such that $\norm{R(t)}\leq C_N h^N$ 
for any $N$, and 
\[
e^{-itP/h} Op^W(a^h) e^{itP/h} \leq c_1 Op^W(b^h(t)) + R(t)
\]
for $t>0$, and the reverse inequality for $t<0$. Moreover, $R(t)$ satisfies 
\[
\bignorm{K^N R(t) K^N}_{B(L^2)}\leq C_N h^N, \quad t\in\re
\]
for any $N$, where $K(\cdot)=\jap{\dist(\cdot,\O^h(t))}$ with 
\[
\supp[b^h(t)] \subset \O^h(t):=\bigset{(r,\rho,\th,\o)}{(hr,\rho,\th,h\o)\in \O(t)}.
\] 
\end{enumerate}
\end{thm}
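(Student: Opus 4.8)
The plan is to run a positive-commutator (signed Egorov) argument of the kind used in \cite{Na2} and \cite{IN1}, with the classical input supplied by Section~2. First I would reformulate the target: conjugating by $e^{itP/h}$, the inequality in (iv) is equivalent, for $t>0$, to
\[
Op^W(a^h)\le c_1\,\Phi(t)+R'(t),\qquad \Phi(t):=e^{itP/h}Op^W(b^h(t))e^{-itP/h},
\]
with $\norm{R'(t)}=\norm{R(t)}$ (and the reverse for $t<0$). Since
$c_1\Phi(t)-Op^W(a^h)=\bigpare{c_1 Op^W(b^h(0))-Op^W(a^h)}+\int_0^t\tfrac{d}{ds}\bigpare{c_1\Phi(s)}\,ds$
and $\tfrac{d}{ds}\Phi(s)=e^{isP/h}Op^W(c^h(s))e^{-isP/h}$ with $c^h(s):=\S\bigpare{\tfrac ih[P,Op^W(b^h(s))]+Op^W(\pa_s b^h(s))}$, it suffices to build $b^h(t)$ so that (1) $c_1 b^h(0)-a^h$ possesses a good square root modulo $O(h^\infty)$, and (2) $c^h(s)\in S(h^\infty\jap{s}^{-1-\m},g_1^h)$, so that $\norm{Op^W(c^h(s))}\le C_N h^N\jap{s}^{-1-\m}$ by Calder\'on--Vaillancourt and the integral converges.

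For the construction of $b(t)$ I would take $\chi\in C_0^\infty(\O_{3\e/2})$ with $0\le\chi\le1$ and $\chi\equiv1$ on a neighbourhood of $\supp a$, fix $c_1>\norm{a}_{L^\infty}$, and start from $b_0(t):=\chi^2\circ\exp(-tH_{p_c})$. This is automatically supported in $\exp(tH_{p_c})[\O_{3\e/2}]\subset\O(t)$, and since $\pa_t b_0(t)+\{p_c,b_0(t)\}=0$, the first remainder symbol is $\{p-p_c,b_0^h(t)\}$ plus the $O(h^2)$ Weyl-commutator error; by the estimate \eqref{est:vh} for $v=p-p_c$, and since the explicit conic flow of Section~2.1 puts $\supp b_0^h(t)$ in $\{\,r\sim\sqrt{2E_0}\,\jap{t}/h\,\}$ with $E_0>0$ on $\supp a$, this is $O(h^\m\jap{t}^{-1-\m})$ in $S(\,\cdot\,,g_1^h)$. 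Solving the transport equations $\pa_t b_j+\{p_c,b_j\}=-r_{j-1}$, $b_j(0)=0$, iteratively knocks the remainder down by a power of $h^{\m}$ at each step, and each $b_j(t)$ is again supported in $\exp(tH_{p_c})[\O_{3\e/2}]$ because the transport propagates support along the $p_c$-characteristics (the support bookkeeping being exactly why $\O(t)$ is built as the $p_c$-flow-out of the \emph{larger} set $\O_{2\e}$, leaving room for the $O(h^\infty)$ tails of the symbol calculus). Setting $b^h(t)\sim\sum_j b_j^h(t)$ and cutting off to $\exp(tH_{p_c})[\O_{2\e}]$ gives (ii), (iii) and property~(2), uniformly in $t\in\re$; the uniformity and the integrable $\jap{t}^{-1-\m}$ decay are the mechanism of Lemma~4.1 and rest on the explicit conic trajectories together with Theorem~2.1.

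For the starting inequality I would use the standard quasi-square-root device. Because $\chi\equiv1$ on $\supp a$, the functions $\gamma_\pm:=\sqrt{c_1\chi^2\pm a}$ lie in $C_0^\infty(\O_{3\e/2})$: under the root one has $c_1-a>0$ near $\supp a$ and $c_1\chi^2\ge0$ off $\supp a$, and these patch smoothly. I would then correct $b(0)$ by $O(h^2)$-and-higher terms (supported away from $\pa\supp\chi$, leaving $b(0)\equiv1$ on $\supp a$, and with a harmless positive $O(h^2)$ added to keep $b(0)\ge0$ so that (i) survives) so that $c_1 b^h(0)-a^h$ equals $\gamma_{-}^{h}\#\gamma_{-}^{h}$ modulo $O(h^\infty)$; then $c_1 Op^W(b^h(0))-Op^W(a^h)=Op^W(\gamma_{-}^{h})^{2}+O(h^\infty)\ge-O(h^\infty)$, which is property~(1). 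Combining this with the previous paragraph, $c_1\Phi(t)-Op^W(a^h)\ge-O(h^\infty)$ for $t>0$; conjugating back yields the inequality in (iv) with $\norm{R(t)}=O(h^\infty)$, and the reverse for $t<0$ by the same argument run backwards. For the stronger weighted bound $\norm{K^N R(t) K^N}\le C_N h^N$ one observes that every error symbol entering $R(t)$ (Weyl remainders, the square-root remainder, the cut-off tails) is $O(h^\infty)$ in the sharper sense of $S(h^\infty\jap{\dist(\,\cdot\,,\O^h(t))}^{-\infty},g_1^h)$, and this class is stable under the conjugations $e^{\pm isP/h}$ because, by Theorem~2.1 and an Egorov argument as in Section~4, the $P$-flow stays within a bounded distance of the $p_c$-flow-out $\O^h(t)$ uniformly in $t$; pseudolocality then absorbs the weights $K^N$.

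The main obstacle I anticipate is keeping all the requirements consistent \emph{and uniform in $t\in\re$} simultaneously: the error must be genuinely $O(h^\infty)$ rather than merely $O(h^\m)$, which forces asymptotic expansions both in the transport construction and in the square root of $c_1 b(0)-a$, while still preserving the pointwise bound $|a|\le c_1 b(0)$ and confining the supports to the prescribed flow-out $\O(t)$; and no constant may deteriorate as $|t|\to\infty$. That last point is where the explicit conic trajectories of Section~2.1 and the long-time closeness estimates of Theorem~2.1 are indispensable, and reconciling it with the microlocal weighting in (iv) is the delicate part of the bookkeeping.
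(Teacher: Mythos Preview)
There is a genuine gap in your construction, and it is precisely the difficulty the paper's proof is designed to handle. Your principal symbol $b_0(t)=\chi^2\circ\exp(-tH_{p_c})$ does \emph{not} satisfy (iii): its $\rho$-derivatives grow linearly in $t$. From the explicit formulas in Section~2.1 one has $r(t)=\sqrt{2E_0 t^2+2r_0\rho_0 t+r_0^2}$, so $\partial r(t)/\partial\rho_0\sim t$ as $|t|\to\infty$; equivalently, writing $\tilde\gamma(t;\tilde r,\rho,\th,\o):=\gamma(t;\tilde r+t\rho,\rho,\th,\o)$ one sees $\tilde\gamma(t)\to a\circ w_{c,\pm}$ is uniformly bounded in $C_0^\infty$, but $\gamma(t;r,\rho,\th,\o)=\tilde\gamma(t;r-t\rho,\rho,\th,\o)$ so that $\partial_\rho\gamma=-t\,\partial_r\tilde\gamma+\partial_\rho\tilde\gamma$ is of size $|t|$. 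The paper says this explicitly: ``$\gamma$ does not satisfy the boundedness of the derivatives uniformly in $t$.'' Since (iii) fails for $b_0$, the whole transport-equation iteration collapses: each Poisson bracket with $p_c$ introduces another $\rho$-derivative, and you cannot close the estimates uniformly in $t$. Your appeal to ``the mechanism of Lemma~4.1'' conflates two different objects: in Lemma~4.1 the evolved symbol is $a\circ w_c(t)$ with $w_c(t)=\exp(-tH_{p_c})\circ\exp(tH_{p_f})$, and it is exactly the extra free shift $r\mapsto r+t\rho$ that removes the bad $\rho$-dependence (since $w_c(t)\to w_{c,\pm}$). Here there is no such sandwich; $b(t)$ must track $e^{-itP/h}(\cdot)e^{itP/h}$ directly, and its support genuinely drifts in $r$.

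The paper's fix is not a standard Egorov expansion but a \emph{time-mollification}: one sets $b_0(t)=\f_{\d\jap{t}}\underset{t}{*}\gamma$, convolving $\gamma$ in the $t$-variable with a bump of width $\d\jap{t}$ chosen so that $\pm\f'\le 0$ on $\pm[0,\infty)$. This smoothing in $t$ tames the $\rho$-derivatives (via an integration-by-parts in $s$ that trades $\partial_\rho$ for $\partial_s$ on $\tilde\gamma(s;r-s\rho,\cdots)$), at the price of turning the transport \emph{equation} into the transport \emph{inequality} $\partial_t b_0\ge -\{p_c,b_0\}$ for $t>0$. This is exactly why the theorem yields only the one-sided operator inequality in (iv); your proposal, if it worked, would give an approximate equality, which is more than is true here. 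The remaining lower-order errors are then absorbed not by solving further transport equations but by a nested-cutoff/sharp-G{\aa}rding scheme, adding $\mu_j b_j(t)$ with $b_j$ built from dilated versions $a_j$ of $a$ supported on slightly larger sets---this is why the statement uses the $2\e$-neighborhood $\O_{2\e}$ rather than $\O_\e$. Your square-root device for the $t=0$ inequality is fine in spirit, but it does not couple to the rest of the argument once the transport construction fails.
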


Before proving Theorem A.1, we present a corollary which is needed in 
Section 4. 

\begin{cor}
Let $\bar\y\in C^\infty(\re)$ be such that $\bar\y(r)=0$ if $r>2$, and 
$\bar\y(r)=1$ if $r\leq 1$. We choose  $\e_1>0$ so small that 
\[
\dist\bigpare{\bigset{(r,\rho,\th,\o)}{|r|\leq \e_1\jap{t}},\O(t)}
\geq \d\jap{t}
\]
with some $\d>0$. Then for any $N$ there is $C_N>0$ such that 
\[
\bignorm{\bar\y\bigpare{\tfrac{hr}{\e_1\jap{t}}}e^{-itP/h} Op(a^h)}
\leq C_N h^N \jap{t}^{-N}, \quad t\in\re.
\]
\end{cor}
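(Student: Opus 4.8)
The plan is to square the operator and reduce to Theorem~A.1 applied to the real-valued symbol built from $a$. Set $\bar\y_t:=\bar\y\bigpare{hr/(\e_1\jap{t})}$, a self-adjoint multiplication operator supported in $\bigset{(r,\rho,\th,\o)}{hr\leq 2\e_1\jap t}$. Since $a$ is real-valued, $Op^W(a^h)$ is self-adjoint, so $Op^W(a^h)^2=Op^W(d^h)$ for a real-valued symbol $d^h$ which, by the Weyl calculus, is again of the form $\tilde d(h;hr,\rho,\th,h\o)$ with $\tilde d(h;\cdot)$ uniformly bounded in $C_0^\infty(\O_\e)$ modulo an $S(h^\infty,g_1)$ remainder. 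Because $e^{-itP/h}$ is unitary,
\[
\bignorm{\bar\y_t e^{-itP/h}Op^W(a^h)}^2=\bignorm{\bar\y_t e^{-itP/h}Op^W(d^h)e^{itP/h}\bar\y_t},
\]
so it suffices to prove the right-hand side is $O(h^\infty\jap t^{-\infty})$.

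Next I would apply Theorem~A.1 to $d^h$ and to $-d^h$ (both uniformly bounded in $C_0^\infty$ and supported in $\O_\e$, with $|\pm d^h|\leq c_1 b(0)$ for a common $b$). For each sign of $t$ this produces a two-sided operator bound $B(t)\leq e^{-itP/h}Op^W(d^h)e^{itP/h}\leq C(t)$, where $B(t)$ and $C(t)$ are each of the form $\pm c_1Op^W(b^h(t))+R(t)$ with $b^h(t)\in S(1,g_1^h)$ supported in $\O^h(t)$, $\O(t)=\exp(tH_{p_c})[\O_{2\e}]$, and with $R(t)$ satisfying $\bignorm{K^N R(t)K^N}\leq C_N h^N$ for all $N$, $K=\jap{\dist(\cdot,\O^h(t))}$, exactly as in Theorem~A.1(iv). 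Sandwiching by $\bar\y_t$ and using that $\bar\y_t e^{-itP/h}Op^W(d^h)e^{itP/h}\bar\y_t$ is self-adjoint gives
\[
\bignorm{\bar\y_t e^{-itP/h}Op^W(d^h)e^{itP/h}\bar\y_t}\leq c_1\bignorm{\bar\y_t Op^W(b^h(t))\bar\y_t}+\bignorm{\bar\y_t R(t)\bar\y_t},
\]
with $b^h(t),R(t)$ those arising from either $B(t)$ or $C(t)$.

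Both remaining terms come from the separation of supports. By the explicit formula for $r(t)$ in Section~2, every $p_c$-trajectory issuing from $\O_{2\e}$ satisfies $r(t)\geq c\jap t$ for a $c>0$ uniform over $\O_{2\e}$ (here $\o_0\neq 0$ forces $E_0\geq E_*>0$), whence $\O^h(t)\subset\bigset{(r,\rho,\th,\o)}{r\geq c\jap t/h}$; after shrinking $\e_1$ so that $2\e_1<c$ — which is precisely the choice made in the statement — the support of $\bar\y_t$ is separated from $\O^h(t)$ by at least $c'\jap t/h$, so $K\geq c'\jap t/h$ there. Hence $\bignorm{\bar\y_t K^{-N}}\leq C_N h^N\jap t^{-N}$ and
\[
\bignorm{\bar\y_t R(t)\bar\y_t}\leq\bignorm{\bar\y_t K^{-N}}\,\bignorm{K^N R(t)K^N}\,\bignorm{K^{-N}\bar\y_t}\leq C_N h^{3N}\jap t^{-2N},
\]
which is $O(h^\infty\jap t^{-\infty})$. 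For the other term, $\bar\y_t$ and $b^h(t)$ are symbols in $S(1,g_1^h)$ with supports separated by at least $c'\jap t/h$ in $r$; by the disjoint-support estimate of the pseudodifferential calculus (cf.\ Appendix~B), $\bignorm{\bar\y_t Op^W(b^h(t))}\leq C_N(h/\jap t)^N$ for every $N$, so a fortiori $\bignorm{\bar\y_t Op^W(b^h(t))\bar\y_t}=O(h^\infty\jap t^{-\infty})$. Combining, $\bignorm{\bar\y_t e^{-itP/h}Op^W(d^h)e^{itP/h}\bar\y_t}=O(h^\infty\jap t^{-\infty})$, i.e.\ $\bignorm{\bar\y_t e^{-itP/h}Op^W(a^h)}\leq C_N h^N\jap t^{-N}$ for every $N$.

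The main obstacle is making these two reductions rigorous rather than deep: one must check that Theorem~A.1, stated for a fixed real symbol, applies unchanged to the $h$-dependent family $\pm d^h$ (uniformly bounded in $C_0^\infty$, supported in $\O_\e$ modulo $O(h^\infty)$), and one must verify that the cutoff $\bar\y_t$, whose smoothness scale in $r$ is $\jap t/h$, interacts correctly — uniformly in $t$ — both with the non-standard symbol class $S(1,g_1^h)$ and with the spatial weight $K$ of Theorem~A.1(iv). The genuinely $O(h^\infty)$ input, namely the Egorov remainder together with its quantitative spatial-weight control, is already provided by Theorem~A.1.
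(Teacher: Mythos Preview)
Your proof is correct and essentially identical to the paper's: both square the operator (the paper sets $Op^W(\tilde a)=Op^W(a)Op^W(a)^*$), apply Theorem~A.1 to the resulting nonnegative operator, and then use that $\supp\bar\y_t$ is separated from $\O^h(t)$ by a distance $\gtrsim\jap{t}/h$ to bound both the $Op^W(b^h(t))$ term and the weighted remainder $R(t)$. Your version is slightly more explicit in obtaining a two-sided bound by applying Theorem~A.1 to both $\pm d^h$, whereas the paper uses the positivity $Op^W(\tilde a)\geq 0$ for one side; your minor caveat about the $h$-dependence of $d^h$ applies equally to the paper's $\tilde a$ and is harmless since only uniform $C_0^\infty$ bounds on the symbol are used in the construction.
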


We note if $\e>0$ is chosen sufficiently small, then we can find $\e_1>0$ 
satisfying the above property. 

\begin{proof}[Proof of Corollary A.2]
We apply Theorem A.1 with $\tilde a$ such that $Op^W(\tilde a)=
Op^W(a) Op^W(a)^*$, which satisfies the same condition. 
Then we have 
\begin{align*}
\bigabs{\bar\y\bigpare{\tfrac{hr}{\e_1\jap{t}}}e^{-itP/h} Op(a^h)}^2 
&={\bar\y\bigpare{\tfrac{hr}{\e_1\jap{t}}}e^{-itP/h} Op(\tilde{a}^h) e^{itP/h}
\bar\y\bigpare{\tfrac{hr}{\e_1\jap{t}}}}\\
&\leq c_1 {\bar\y\bigpare{\tfrac{hr}{\e_1\jap{t}}}Op(b^h(t))
\bar\y\bigpare{\tfrac{hr}{\e_1\jap{t}}}}
+{ \bar\y\bigpare{\tfrac{hr}{\e_1\jap{t}}} R(t)\bar\y\bigpare{\tfrac{hr}{\e_1\jap{t}}} }\\
&\leq C_N h^N\jap{t}^{-N},
\end{align*}
where we have used the fact that $\supp[b^h(t)]$ is separated from 
$\O^h(t)$ with the distance bounded from 
below by $\d \jap{t/h}$. 
\end{proof}

\begin{proof}[Proof of Theorem A.1] 
The proof is analogous to that of \cite{Na1}, \cite{It} and \cite{IN1} Section~3, 
and we only sketch the main steps. 
We may suppose $a$ is non negative without loss of generality. 
If we set 
\[
\g(t)=a\circ \exp(tH_{p_c})^{-1},
\]
then it is easy to see 
\[
\frac{\pa}{\pa t} \g=-\{p_c,\g\}, \quad \g(0)=a,
\]
and this is a good candidate for the principal term of $b(t)$, 
but $\g$ does not satisfy the boundedness of the derivatives uniformly in $t$. 
We choose $\f\in C_0^\infty(\re)$ so that 
\[
\supp\f \subset [-1,1], \quad \f(t)\geq 0 \text{ for all $t$}, \quad 
\int_{-1}^1 \f(t)dt =1, 
\]
and moreover,  $\pm \f'(t)\leq 0$ for $\pm t\geq 0$. We set
\[
\f_\n(t)= \f(t/\n), \quad \text{for }\n>0, 
\]
and we denote the convolution in the $t$-variable by ``$\underset{t}{*}$''. 
Then we set
\[
b_0(t,\cdot)=\f_{\d\jap{t}}\underset{t}{*} \g=\int \f_{\d\jap{t}}(t-s)\g(s,\cdot)ds
\]
with sufficiently small $\d>0$. Then we have 
\begin{align}
\frac{\pa}{\pa t} b_0 &= \int \pa_t(\f_{\d\jap{t}}(t-s)) \g(s,\cdot)ds \\
&= -\int \frac{t(t-s)}{\d\jap{t}^3}\f'((t-s)/\d\jap{t})\g(s,\cdot)ds 
 +\f_{\d\jap{t}}\underset{t}{*} (\pa_t\g)\nonumber \\
&\geq -\f_{\d\jap{t}}\underset{t}{*} \{p_c,\g\}=-\{p_c,b_0(t,\cdot)\}\nonumber
\end{align}
for $t>0$ by the condition of $\f$. We have the reverse inequality for $t<0$. 

We then show the derivatives of $b_0$ satisfies the required uniform boundedness. 
We first note 
\[
\tilde\g(t;r,\rho,\th,\o) :=\g(t;r+t\rho,\rho,\th,\o) \to a\circ w_\pm
\quad (t\to\pm\infty)
\]
in the $C_0^\infty$-topology, by virtue of the existence of the classical scattering
for $p_c$. Thus we have the representation:
\[
\g(t;r,\rho,\th,\o)= \tilde \g(t;r-t\rho,\rho,\th,\o)
\]
with $\tilde \g(t)$ uniformly bounded in $C_0^\infty(T^*M)$. Hence we learn that
the derivatives in variables except for $\rho$ are uniformly bounded. 
Then this property applies also for $b_0(t)$. 
Let us consider the first derivative of $b_0(t)$ in $\rho$:
\begin{align*}
\pa_\rho b_0(t)&= -\int\f_{\d\jap{t}}(t-s) s(\pa_r\tilde\g)(s,r-s\rho,\rho,\th,\o)ds \\
&\quad + \int\f_{\d\jap{t}}(t-s) (\pa_\rho\tilde\g)(s,r-s\rho,\rho,\th,\o)ds.
\end{align*}
The second term is clearly uniformly bounded. We note
\[
(\pa_r\tilde\g)(s;r-s\rho,\rho,\th,\o) =
-\frac{1}{\rho}\biggbra{\frac{\pa}{\pa s}[\tilde\g(s;r-s\rho,\rho,\th,\o)]
-(\pa_s\tilde\g)(s;r-s\rho,\rho,\th,\o)}
\]
and then by integration by parts we have:
\begin{align*}
&\int\f_{\d\jap{t}}(t-s) s(\pa_r\tilde\g)(s,r-s\rho,\rho,\th,\o)ds \\
&\quad = \frac{1}{\rho}\int\frac{\pa}{\pa s}\bigbra{\f((t-s)/\d\jap{t})s}
\tilde\g(s,r-s\rho,\rho,\th,\o)ds \\
&\quad\quad  + \frac{1}{\rho}\int \f((t-s)/\d\jap{t})s (\pa_s\tilde\g)(s;r-s\rho,\rho,\th,\o)ds\\
&\quad = \frac{1}{\rho}\int \f((t-s)/\d\jap{t})\tilde\g(s,r-s\rho,\rho,\th,\o)ds \\
&\quad\quad -\frac{1}{\rho}\int \frac{s}{\d\jap{t}}
\f'((t-s)/\d\jap{t})\tilde\g(s,r-s\rho,\rho,\th,\o)ds \\
&\quad\quad  + \frac{1}{\rho}\int \f((t-s)/\d\jap{t})s (\pa_s\tilde\g)(s;r-s\rho,\rho,\th,\o)ds
\end{align*}
Each term in the last expression is bounded uniformly in $t$ since $s\sim t$, 
and $\pa_s\tilde\g =O(\jap{s}^{-2})$. Repeating this procedure, we can show 
that all the derivatives of $b_0$ are uniformly bounded. It is also 
easy to check that $b_0$ satisfies the required support property 
provided $a$ is supported in a sufficiently small neighborhood, 
and $\d>0$ is chosen sufficiently small. 

Now by (A.1) and the sharp G{\aa}rding inequality, we have 
\[
\frac{d}{dt} Op^W(b_0^h(t)) \geq -\frac{i}{h}[ P,Op^W(b_0^h(t))] + Op(r_1^h(t))
\]
with $r_1(t)=O(h^\m)$. We set $c_j=7/4-2^{-j}$ for $j=1,2,\dots$, and set
\[
a_j(r,\rho,\th,\o)=a\biggpare{\frac{r}{c_j},\frac{\rho}{c_j},\frac{\th}{c_j},\frac{\o}{c_j}}, 
\quad b_j(t) = \f_{\d\jap{t}}\underset{t}{*} (a_j\circ \exp(tH_{p_c})).
\]
Then we set 
\[
b(t)\sim b_0(t)+ \sum_{j=1}^\infty \m_j b_j(t),
\]
with appropriately chosen constants $\m_j>0$ so that 
\[
\frac{d}{dt} Op^W(b^h(t))\geq -\frac{i}{h}[P,Op^W(b^h(t))] + O(h^\infty), 
\]
and $b(t)$ satisfies all the required properties. We refer to \cite{Na1} and 
\cite{IN1} for the detail of the above construction. 
\end{proof}

\section{Beals type characterization of Fourier integral operators}
In this appendix, we consider operators on $\re^n$, and we discuss 
Beals type characterization of FIOs in terms of $h$-pseudodifferential operators. 
We use the result for scattering manifolds, but the generalization 
is straightforward, and we omit it. 
Most of the arguments here are similar to \cite{IN3} Section~2, and we mainly discuss the 
modifications necessary to show our results. 

We let $S$ be a canonical diffeomorphism on $T^*\re^n$, which is also supposed to be 
homogeneous in $\x$-variables, i.e., 
\[
\text{if } (y,\y)=S(x,\x), \text{ then } S(x,\l\x)=(y,\l\y) \text{ for }\l>0.
\]
We also let $U\in \mathcal{L}(\mathcal{S},\mathcal{S}')$, 
and let $u\in \mathcal{D}'(\re^{2n})$ be its distribution kernel. 
For a symbol $a\in C^\infty(T^*\re^n)$, we denote
\[
a^h(x,\x) =a(x,h\x), \quad Op^W(a^h)= a^W(x, hD_x)
\]
for $h>0$ as before. For $a\in C_0^\infty(T^*\re^n\setminus 0)$, we define
\[
Ad_S(a^h)U= Op^W(a^h\circ S^{-1})U- UOp^W(a^h)
\in \mathcal{L}(\mathcal{S},\mathcal{S}').
\]
We note $Op^W(a^h\circ S^{-1})= Op^W((a\circ S^{-1})^h)$ since $S$ is homogeneous 
in $\x$. 

\begin{thm}
Let $U\in B(L^2_{cpt}(\re^n),L^2_{loc}(\re^n))$. 
Suppose for any $a_1,a_2,\dots, a_N\in C_0^\infty(T^*\re^n\setminus 0)$, 
there is $C_N>0$ such that
\begin{equation}\label{cond:B1}
\bignorm{Ad_S(a_1^h)Ad_S(a_2^h)\cdots Ad_S(a_N^h)U}_{B(L^2)}\leq C_N h^N.
\end{equation}
Then $U$ is an FIO of order 0 associated to $S$. 
\end{thm}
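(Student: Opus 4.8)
The plan is to reduce Theorem~B.1 to the (semiclassical) Beals characterization of $h$-pseudodifferential operators by conjugating $U$ with a fixed elliptic model FIO associated to $S$; this follows the scheme of \cite{IN3} Section~2, so I will only lay out the structure of the argument rather than grind through the estimates.

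First I would microlocalize, using that being an FIO of order $0$ associated to $S$ is a microlocal property. Fix $(y_0,\y_0)\in T^*\re^n\setminus 0$, set $(x_0,\x_0)=S(y_0,\y_0)$, and reduce to analyzing $Op^W(\f_1^h)\,U\,Op^W(\f_2^h)$ for $\f_1,\f_2\in C_0^\infty(T^*\re^n\setminus 0)$ supported in small conic neighbourhoods of $(x_0,\x_0)$ and $(y_0,\y_0)$ respectively (homogeneous of degree $0$ for large $\x$). The pieces with $S(\supp\f_2)\cap\supp\f_1=\emptyset$ are disposed of first: choosing symbols $a_j$ equal to $1$ near $\supp\f_2$ with $S(\supp a_j)\cap\supp\f_1=\emptyset$, repeated use of \eqref{cond:B1} shows these pieces are $O(h^N)$ in $B(L^2)$ for every $N$, hence negligible. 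So we may assume $\f_1,\f_2$ are supported in small neighbourhoods related by $S$.

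Next I would fix a reference operator: near these neighbourhoods the homogeneous canonical diffeomorphism $S$ is generated by a nondegenerate phase function (of possibly mixed type), so it is realized microlocally by an elliptic FIO $F$ of order $0$ associated to $S$; let $F^\sharp$ be a microlocal parametrix, invertible over the relevant neighbourhoods. Putting $V=F^\sharp U$ (with the cut-offs of the first step built in), boundedness of $F^\sharp$ together with $U\in B(L^2_{cpt},L^2_{loc})$ gives that $V$ is locally $L^2$-bounded. The crucial ingredient is then Egorov's theorem for $F$: since $F$ intertwines $S$ modulo an $h$-improvement, $F^\sharp Op^W(a^h\circ S^{-1})F = Op^W(a^h)+h\,Op^W(r_a^h)$ with $r_a\in C_0^\infty(T^*\re^n\setminus 0)$, so that $[Op^W(a^h),V]$ equals $F^\sharp\bigl(Ad_S(a^h)U\bigr)$ plus $h$ times lower-order terms (plus $O(h^\infty)$). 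Iterating this identity and feeding in \eqref{cond:B1}, an induction on $N$ transfers the hypothesis to $V$: every $N$-fold commutator $[Op^W(b_1^h),[\,\cdots\,,[Op^W(b_N^h),V]\,\cdots\,]]$ is $O(h^N)$ in $B(L^2)$, uniformly in $h$.

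By the semiclassical Beals characterization of pseudodifferential operators (\cite{IN3}; see also \cite{Ho}, \cite{Ta}), $V$ is therefore microlocally an $h$-pseudodifferential operator of order $0$, whence $U=FV$ microlocally is the composition of the order-$0$ FIO $F$ with an order-$0$ pseudodifferential operator, which is again an order-$0$ FIO associated to $S$. Summing over a conic partition of unity in $T^*\re^n\setminus 0$ completes the argument. The main obstacle will be the bookkeeping of the reduction step: propagating $L^2$-boundedness through the inductive transfer of \eqref{cond:B1} to $V$, keeping careful track of the microlocal cut-offs and of the $O(h^\infty)$ remainders generated by the off-support parts, and verifying that the Egorov remainders $r_a^h$ genuinely lie in $C_0^\infty$ so that the induction closes — precisely the technical points carried out in \cite{IN3}.
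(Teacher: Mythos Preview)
Your strategy is sound, but it is genuinely different from the paper's. You reduce to the pseudodifferential case by conjugating with a locally constructed elliptic FIO $F$ quantizing $S$, then invoke Egorov for $F$ and the semiclassical Beals characterization of $\Psi$DOs to conclude that $V=F^\sharp U$ is a $\Psi$DO. The paper never introduces a reference FIO. Instead it works directly with the Lagrangian-distribution definition of an FIO (the Besov condition $Op(a_1)\cdots Op(a_N)u\in B_{2,\infty}^{-n/2,\infty}$ for $a_j\in S^1_{cl}$ vanishing on $\Lambda_S$): via Lemma~B.3 this is reduced to an $L^2$ operator-norm bound on $Op^W(\g_1^h)\,Ad_S(a_1)\cdots Ad_S(a_N)U\,Op^W(\g_2^h)$, and the key calculation is that homogeneity of $S$ and of the $a_j$ allows one to write $a_j(x,\x)\G_0(x,h\x)=h^{-1}\f_j(x,h\x)$ with $\f_j\in C_0^\infty(T^*\re^n\setminus 0)$, so the classical $Ad_S(a_j)$'s become $h^{-1}$ times the semiclassical $Ad_S(\f_j^h)$'s, and the hypothesis \eqref{cond:B1} applies directly.

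What each approach buys: the paper's route is entirely self-contained within its own Besov/Lagrangian framework and uses nothing beyond symbol calculus and Lemma~B.3; in particular it needs neither a local phase-function parametrization of $S$ nor a Beals theorem for $\Psi$DOs as a black box. Your route is the more classical one and would work, but it imports more machinery (existence of the elliptic model $F$, Egorov with controlled remainders, the $h$-$\Psi$DO Beals theorem, and the fact that the composite $FV$ satisfies the paper's specific Besov definition of an FIO). Note also that the paper's modification of \cite{IN3} Theorem~2.1 is precisely this direct argument, not a conjugation scheme, so your opening remark that you are ``following the scheme of \cite{IN3} Section~2'' is slightly misleading.
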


\begin{cor}
Let $S$ and $U$ as above. If for any $a\in C_0^\infty(T^*\re^n\setminus 0)$
there is an $h$-dependent symbol $b\in C_0^\infty(T^*\re^n\setminus 0)$ 
such that: 
\begin{align*}
&\bigabs{\pa_x^\a\pa_\x^\b b(h;x,\x)}\leq C_{\a\b}h, 
\quad \text{for any $\a,\b\in\ze_+^n$, }h\in(0,1],\\
&Ad_S(a^h) U= Op^W(b^h)U +R, \quad \norm{R}_{B(L^2)}=O(h^\infty),
\end{align*}
then $U$ is an FIO of order 0 associated to $S$. 
\end{cor}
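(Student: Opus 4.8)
The plan is to reduce Corollary~B.2 to Theorem~B.1 by verifying its hypothesis \eqref{cond:B1}, bootstrapping the assumed $N=1$ statement to all orders in $h$. Throughout, write $f=O(h^k)$ for an $h$-dependent symbol with $\bigabs{\pa_x^\a\pa_\x^\b f(h;x,\x)}\le C_{\a\b}h^k$ for all $\a,\b$ and $h\in(0,1]$. I would prove, by induction on $N$, the following structural statement: for any $a_1,\dots,a_N\in C_0^\infty(T^*\re^n\setminus0)$ there are a symbol $b_N=O(h^N)$ (compactly supported, modulo an $O(h^\infty)$ tail) and an operator $R_N$ with $\norm{R_N}_{B(L^2)}=O(h^\infty)$ such that
\[
Ad_S(a_1^h)\cdots Ad_S(a_N^h)\,U = Op^W(b_N^h)\,U + R_N .
\]
Granting this, $\norm{Op^W(b_N^h)}_{B(L^2)}=O(h^N)$ by the semiclassical Calder\'on--Vaillancourt bound (the rescaling $\x\mapsto h\x$ is harmless since $b_N$ is $x$-compactly supported), hence $\norm{Ad_S(a_1^h)\cdots Ad_S(a_N^h)U}_{B(L^2)}=O(h^N)$, which is \eqref{cond:B1}, and Theorem~B.1 yields that $U$ is an FIO of order $0$ associated to $S$. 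The base case $N=1$ is precisely the hypothesis of the corollary.

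The engine of the induction has two ingredients. First, the semiclassical Weyl calculus on the compactly supported classes at hand: $Op^W(f^h)Op^W(g^h)=Op^W((f\#g)^h)$ with $f\#g=fg+\tfrac{h}{2i}\{f,g\}+O(h^2)$, and hence $[Op^W(f^h),Op^W(g^h)]=Op^W(e^h)$ with $e=\tfrac{h}{i}\{f,g\}+O(h^3)$; in particular if $g=O(h^k)$ then $f\#g=O(h^k)$ and the commutator has symbol $O(h^{k+1})$, with all supports preserved modulo $O(h^\infty)$ and all estimates uniform in $h$. Second, using $Op^W(c^h\circ S^{-1})=Op^W((c\circ S^{-1})^h)$ (homogeneity of $S$ in $\x$), the commutation identity
\[
Ad_S(c^h)\bigpare{Op^W(b^h)V}=Op^W(b^h)\,Ad_S(c^h)V+Op^W(e^h)V,\qquad e=O(h^{k+1})\ \text{if}\ b=O(h^k),
\]
which I would obtain simply by expanding $Ad_S(c^h)(Op^W(b^h)V)=Op^W((c\circ S^{-1})^h)Op^W(b^h)V-Op^W(b^h)V\,Op^W(c^h)$ and commuting $Op^W((c\circ S^{-1})^h)$ through $Op^W(b^h)$, the price being $Op^W(e^h)V$.

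For the inductive step, apply the level-$(N-1)$ statement to $a_2,\dots,a_N$, obtaining $Ad_S(a_2^h)\cdots Ad_S(a_N^h)U=Op^W(\beta^h)U+R'$ with $\beta=O(h^{N-1})$ and $\norm{R'}_{B(L^2)}=O(h^\infty)$, then hit it with $Ad_S(a_1^h)$: the piece $Ad_S(a_1^h)R'$ is $O(h^\infty)$ since $Op^W((a_1\circ S^{-1})^h)$ and $Op^W(a_1^h)$ are bounded on $L^2$ uniformly in $h$; the commutation identity converts $Ad_S(a_1^h)(Op^W(\beta^h)U)$ into $Op^W(\beta^h)Ad_S(a_1^h)U+Op^W(e^h)U$ with $e=O(h^N)$; and the corollary's hypothesis for $a_1$, namely $Ad_S(a_1^h)U=Op^W(b_1^h)U+R_1$ with $b_1=O(h)$ and $\norm{R_1}_{B(L^2)}=O(h^\infty)$, gives $Op^W(\beta^h)Ad_S(a_1^h)U=Op^W((\beta\#b_1)^h)U+O(h^\infty)$ with $\beta\#b_1=O(h^N)$. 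Collecting terms proves the statement at level $N$ with $b_N:=\beta\#b_1+e=O(h^N)$ and a new $O(h^\infty)$ remainder. I expect the only real difficulty to be technical: establishing the above semiclassical calculus on these non-standard, compactly supported symbol classes with the $\x\mapsto h\x$ rescaling, with constants uniform in $h$ (the one-power-of-$h$ gain per commutator is exactly what makes the induction close), together with the routine localization needed to treat $Op^W(\cdot)U$ and $UOp^W(\cdot)$ as genuine $B(L^2)$-operators given only $U\in B(L^2_{cpt},L^2_{loc})$; both are standard and parallel the arguments of \cite{IN3}.
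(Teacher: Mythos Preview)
Your proposal is correct and follows essentially the same approach as the paper's proof: both establish by induction the structural identity $Ad_S(a_1^h)\cdots Ad_S(a_N^h)U=Op^W(b_N^h)U+R_N$ with $b_N=O(h^N)$ and $R_N=O(h^\infty)$, using the commutator $[Op^W((a_1\circ S^{-1})^h),Op^W(\beta^h)]$ to gain the extra power of $h$ at each step. The paper writes out the $N=2$ case explicitly and then says ``iterating this procedure,'' whereas you package the same computation as a clean commutation identity for $Ad_S(c^h)(Op^W(b^h)V)$ and state the induction formally; the content is the same.
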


\begin{proof}[Proof of Corollary B.2]
We show \eqref{cond:B1} follows from the above condition. 
The cases $N=0,1$ are obvious. Let $N=2$ and we write
\[
Ad_S(a_j^h) U= Op^W(b_j^h)U +R_j, \quad j=1,2. 
\]
Then we have 
\begin{align*}
&Ad_S(a_1^h)Ad_S(a_2^h) U \\
&= Op^W(a_1^h\circ S^{-1}) Op^W(b_2^h)U 
-Op^W(b_2^W)UOp^W(a_1^h) +Ad_S(a_1^h)R_2\\
&= [Op^W(a_1^h\circ S^{-1}),Op^W(b_2^h)]U +Op^W(b_2^h)Op^W(b_1^h) U \\
&\quad +Ad_S(a_1^h)R_2 + Op^W(b_2^h)R_1 \\
&=Op^W(b_{12}^h) U + R_{12},
\end{align*}
where $R_{12}=O(h^\infty)$ and $b_{12}\in C_0^\infty(T^*\re^n\setminus 0)$
satisfies
\[
\bigabs{\pa_x^\a\pa_\x^\b b_{12}(h;x,\x)}\leq C'_{\a\b}h^{2}, 
\quad \text{for any $\a,\b\in\ze_+^n$, }h\in(0,1],
\]
and \eqref{cond:B1} for $N=2$ follows. Iterating this procedure, we
obtain \eqref{cond:B1} for any $N$. 
\end{proof}

In order to prove Theorem~B.1, we first note the semiclassical type 
characterization of Besov spaces. 
By the standard partition-of-unity argument, it is straightforward to observe that 
$u\in B_{2,loc}^{\s,\infty}(\re^m)$ if and only if for any $(x_0,\x_0)\in T^*\re^m\setminus 0$ 
there is $\f\in C_0^\infty(T^*\re^m)$ such that $\f(x_0,\x_0)\neq 0$ and 
\[
\norm{Op^W(\f^h)u}_{L^2} \leq C h^\s, \quad h>0.
\]
Thus, in turn, we learn $u\in B_{2,loc}^{\s,\infty}(\re^{2n})$ if and only if 
for any $(x_0,y_0,\x_0,\y_0)$, $(\x_0,\y_0)\neq (0,0)$, there are 
$\f_1,\f_2\in C_0^\infty(\re^n)$ such that $\f_1(x_0,\x_0)\neq 0$, 
$\f_2(y_0,\y_0)\neq 0$, and 
\[
\norm{Op^W(\f_1^h) U Op^W(\f_2^h)}_{HS} \leq C h^\s, \quad h>0,
\]
where $\norm{\cdot}_{HS}$ denote the Hilbert-Schmidt norm in $B(L^2(\re^n))$. 
Now we choose $\f_3\in C_0^\infty(\re^n)$ so that $\f_3=1$ in a neighborhood of 
$\supp \f_2$. We note 
\begin{align*}
\norm{Op^W(\f_3)}_{HS} &= (2\pi)^{-n/2}\biggpare{\int_{\re^n} |\f_3(x,h\x)|^2dxd\x}^{1/2}\\
&= (2\pi h)^{-n/2} \biggpare{\int_{\re^n} |\f_3(x,\x)|^2dxd\x}^{1/2} 
= Ch^{-n/2}
\end{align*}
for $h>0$ with some $C>0$. Hence we have 
\begin{align*}
\norm{Op^W(\f_1^h) U Op^W(\f_2^h)}_{HS}
&\leq  \norm{Op^W(\f_1^h) U Op^W(\f_2^h)Op^W(\f_3^h)}_{HS}+ R\\
&\leq C h^{-n/2} \norm{Op^W(\f_1^h) U Op^W(\f_2^h)}_{B(L^2)}+R , 
\end{align*}
where 
\[
R=\norm{Op^W(\f_1^h) U Op^W(\f_2^h)(1-Op^W(\f_3^h))}_{HS} =O(h^\infty)
\]
by the symbol calculus. Thus we have proved the following lemma: 

\begin{lem}
If for any $(x_0,y_0,\x_0,\y_0)\in T^*\re^{2n}$ with 
$(\x_0,\y_0)\neq (0,0)$ there are $\f_1,\f_2\in C_0^\infty(T^*\re^n)$ such that 
$\f_1(x_0,\x_0)\neq 0$, $\f_2(y_0,\y_0)\neq 0$ and 
\[
\norm{Op^W(\f_1^h)UOp^W(\f_2^h)}_{B(L^2)}\leq C, \quad h>0, 
\]
then $u\in B_{2,loc}^{-n/2,\infty}(\re^{2n})$. 
\end{lem}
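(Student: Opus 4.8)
The plan is to read off membership in $B_{2,loc}^{-n/2,\infty}(\re^{2n})$ from a semiclassical (Littlewood--Paley type) characterization of Besov spaces, exploiting that the hypothesis controls the \emph{operator} norm of the microlocalized kernel while passing to the \emph{Hilbert--Schmidt} norm costs only a factor $h^{-n/2}$.

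First I would record the semiclassical characterization: $w\in B_{2,loc}^{\s,\infty}(\re^m)$ if and only if for every $(x_0,\x_0)\in T^*\re^m\setminus 0$ there is $\f\in C_0^\infty(T^*\re^m)$ with $\f(x_0,\x_0)\neq0$ and $\norm{Op^W(\f^h)w}_{L^2}\leq Ch^\s$ for all $h\in(0,1]$; this follows from the usual dyadic decomposition together with a partition of unity, rescaled to the semiclassical parameter. Applying it on $\re^{2n}=\re^n_x\times\re^n_y$ to $w=u$, and observing that any phase-space point $(x_0,y_0,\x_0,\y_0)$ with $(\x_0,\y_0)\neq(0,0)$ can be localized by a product cutoff $\f_1(x,\x)\f_2(y,\y)$ — if $\x_0\neq0$ one takes $\f_1$ supported away from $\{\x=0\}$ with $\f_1(x_0,\x_0)\neq0$ and $\f_2$ an arbitrary localizer with $\f_2(y_0,\y_0)\neq0$, and symmetrically when $\y_0\neq0$ — the claim reduces to the estimate $\norm{Op^W(\f_1^h)\,U\,Op^W(\f_2^h)}_{HS}\leq Ch^{-n/2}$ for such $\f_1,\f_2$, where the Hilbert--Schmidt norm of an operator on $L^2(\re^n)$ equals the $L^2(\re^{2n})$-norm of its Schwartz kernel.

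To obtain this estimate I would insert a cutoff $\f_3\in C_0^\infty(T^*\re^n)$ equal to $1$ near $\supp\f_2$ and write $Op^W(\f_1^h)\,U\,Op^W(\f_2^h)=Op^W(\f_1^h)\,U\,Op^W(\f_2^h)\,Op^W(\f_3^h)+R$, where $R=O(h^\infty)$ in Hilbert--Schmidt norm because $\f_2(1-\f_3)\equiv0$, so $Op^W(\f_2^h)\bigpare{1-Op^W(\f_3^h)}$ has a symbol that is $O(h^\infty)$ supported in a fixed compact set. An explicit computation gives $\norm{Op^W(\f_3^h)}_{HS}=(2\pi h)^{-n/2}\norm{\f_3}_{L^2}$; hence, bounding $\norm{Op^W(\f_1^h)\,U\,Op^W(\f_2^h)\,Op^W(\f_3^h)}_{HS}$ by $\norm{Op^W(\f_1^h)\,U\,Op^W(\f_2^h)}_{B(L^2)}\,\norm{Op^W(\f_3^h)}_{HS}$ and invoking the hypothesis, one gets $\norm{Op^W(\f_1^h)\,U\,Op^W(\f_2^h)}_{HS}\leq Ch^{-n/2}$. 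Combining this with the characterization above (with $\s=-n/2$) finishes the proof.

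The steps are all routine; the only point requiring a little care is the reduction in the second paragraph — the precise form of the semiclassical characterization of $B_{2,loc}^{\s,\infty}$, its tensorized version on $\re^{2n}$, and the observation that tensor-product cutoffs suffice to reach every $(x_0,y_0,\x_0,\y_0)$ with $(\x_0,\y_0)\neq(0,0)$. The $h^{-n/2}$ scaling of the Hilbert--Schmidt norm and the $O(h^\infty)$ error from disjoint symbol supports are standard facts of the semiclassical Weyl calculus.
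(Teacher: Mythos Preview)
Your proof is correct and follows essentially the same route as the paper: the semiclassical characterization of $B_{2,loc}^{\s,\infty}$, its tensorized form on $\re^{2n}$ via the Hilbert--Schmidt norm, and the insertion of the auxiliary cutoff $\f_3$ to convert the operator-norm bound into a Hilbert--Schmidt bound at cost $h^{-n/2}$. Your explicit remark on why tensor-product cutoffs reach every point with $(\x_0,\y_0)\neq(0,0)$ is a slight elaboration of what the paper leaves implicit, but the argument is otherwise identical.
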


\begin{proof}[Proof of Theorem B.1]
We modify the proof of Theorem~2.1 in \cite{IN3}. 

We first note 
\[
WF(u)\subset \L_S=\bigset{(x,y,\x,-\y)\in T^*\re^{2n}}{(x,\x)=S(y,\y)}.
\]
The proof is almost the same as that of \cite{IN3}. We note if 
$(x_0,y_0,\x_0,-\y_0)\notin \L_S$ with $\y_0\neq 0$, 
it is straightforward to show $(x_0,y_0,\x_0,-\y_0)\notin WF(u)$ 
(as in \cite{IN3}). If $\x_0\neq 0$, we consider $U^*$ and we can also conclude
$(x_0,y_0,\x_0,-\y_0)\notin WF(u)$. 

Now we let $a_1,a_2,\dots, a_N\in S_{cl}^1(\re^n)$ and let $(x_0,\x_0)=S(y_0,\y_0)$. 
We may assume $a_j$ are homogeneous of order one in $\x$-variables. 
By Lemma~B.3 and the proof of Theorem~2.1 of \cite{IN3}, it suffices to show the following 
to conclude $U$ is an FIO of order 0 associated to $S$: There are $\g_1,\g_2\in C_0^\infty(T^*\re^n)$ 
such that $\g_1(x_0,\x_0)\neq 0$, $\g_2(y_0,\y_0)\neq 0$ and 
\begin{equation}\label{est:B-key}
\bignorm{Op^W(\g_1^h)[Ad_S(a_1)\cdots Ad_S(a_N) U] Op^W(\g_2^h)}_{B(L^2)}\leq C,
\quad h\in (0,1]
\end{equation}
with some $C>0$. 

We set $\G_0,\G_1\in C_0^\infty(T^*\re^n)$ so that they are supported in a small 
neighborhood of $(y_0,\y_0)$, $\G_j=1$ on a neighborhood of $(y_0,\y_0)$, and 
$\G_0=1$ on $\supp\G_1$. 
We then set
\[
\f_j(x,\x)= a_j(x,\x)\G_0(x,\x)\in C_0^\infty(T^*\re^n). 
\]
We note, since $a_j$ are homogeneous of order one in $\x$, 
\[
a_j(x,\x)\G_0(x,h\x)= h^{-1}a_j(x,h\x)\G_0(x,h\x)= h^{-1}\f_j(x,h\x).
\]
We also set
\[
\g_1=\G_1\circ S^{-1}, \quad \g_2=\G_1
\]
so that 
\[
\g_1(1-\G_0\circ S^{-1})=0, \quad (1-\G_0)\g_2=0.
\]
These imply, in particular, 
\[
\g_1(x,h\x)(a_j\circ S^{-1})(x,\x) = h^{-1} \g_1(x,h\x)(\f_j\circ S^{-1})(x,h\x),
\]
and 
\[
a_j(y,\y) \g_2(y,h\y)= h^{-1}\f_j(y,h\y) \g_2(y,h\y).
\]
Using these, and applying the $h$-pseudodifferential operator calculus, we learn 
\begin{multline*}
Op^W(\g_1^h)[Ad_S(a_1)\cdots Ad_S(a_N) U]Op^W(\g_2^h) \\
= h^{-N} Op^W(\g_1^h) [Ad_S(\f_1^h)\cdots Ad_S(\f_N^h) U]Op^W(\g_2^h) +O(h^\infty),
\end{multline*}
and this implies the right hand side is bounded by the assumption of Theorem~B.1. 
Now \eqref{est:B-key} follows from this observation, and we conclude the assertion. 
\end{proof}

We note the conditions and the assertion of Theorem~B.1 are microlocal, and hence 
the theorem is easily extended to a statement in a conic set in $T^*\re^n$. 
In the next proposition, we use the extended statement on a conic set. 

\begin{prop}
Let $\re^m=\re^n\times \re^k$, and let $U$ be a bounded operator on $L^2(\re^m)$ and let 
 $S$ be a homogeneous canonical diffeomorphism on $T^*\re^m$. 
 Suppose $U$ commutes with multiplication operators in $y$ so that $U$ is decomposed to
\[
U=\int^\oplus_{\re^k} \tilde U(y) dy \quad \text{on } L^2(\re^m)\cong L^2(\re^k_y,L^2(\re_x^n)),
\]
where $\{U(y)\}$ is a family of operators on $L^2(\re^n_x)$. Suppose also that 
$S$ is decomposed to 
\[
S: (x,\x,y,\y) \mapsto (\tilde S(y)(x,\x), y,\y+g(x,\x,y))
\]
for $(x,\x,y,\y)\in T^*\re^n \cong T^*\re^n_x\times T^*\re^k_y$, 
where $\{\tilde S(y)\}$ is a family of canonical maps on $T^*\re^n_x$. 
If $U$ is an FIO associated to $S$ on a conic set $\bigset{(x,\x,t,\y)}{\x\neq 0}$, 
then for each $y\in\re^k$, $\tilde U(y)$ is an FIO of order 0 associated to $\tilde S(y)$. 
\end{prop}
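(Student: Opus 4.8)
The plan is to reduce the statement to the Beals-type criterion of Theorem~B.1, applied on $T^*\re_x^n$ in its conic-set version on $\{\x\neq 0\}$, and to verify that criterion for each $\tilde U(y)$ by restricting to fibres the corresponding estimates for $U$. First I would recall that ``being an FIO of order $0$ associated to a homogeneous canonical diffeomorphism'' is, in the working sense used throughout, equivalent to the iterated commutator bounds: for all $a_1,\dots,a_N\in C_0^\infty(T^*\re^{m}\setminus 0)$ supported in $\{\x\neq0\}$, $\bignorm{Ad_S(a_1^h)\cdots Ad_S(a_N^h)U}_{B(L^2)}\le C_N h^N$ (Theorem~B.1 is one implication; the converse, an Egorov-type statement, is standard, cf.\ \cite{IN3}). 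Since a measurable field of operators is determined only up to null sets, it suffices to show, for an arbitrary fixed $y_0$, that $\tilde U(y)$ satisfies the analogous bounds on $T^*\re_x^n$ for a.e.\ $y$ near $y_0$; to this end I would fix $\psi\in C_0^\infty(\re^k)$ with $\psi\equiv 1$ near $y_0$.

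The heart of the argument is a fibrewise identity for $Ad_S$. Given $a\in C_0^\infty(T^*\re_x^n\setminus0)$, lift it to $\tilde a(x,\x,y,\y)=a(x,\x)\psi(y)\in C_0^\infty(T^*\re^m)$, which is independent of $\y$ and supported in $\{\x\neq0\}$. A Weyl-quantized symbol that does not depend on $\y$ is a direct integral over $y$ of $x$-pseudodifferential operators, so $Op^W(\tilde a^h)=\int^\oplus \psi(y)\,Op^W_x(a^h)\,dy$. Using the ``triangular'' form of $S$ — it fixes $y$ and alters $\y$ only by the additive term $g$ — one computes $\tilde a\circ S^{-1}(X,\x,Y,\y)=(a\circ\tilde S(Y)^{-1})(X,\x)\,\psi(Y)$, the $g$-term dropping out precisely because $\tilde a$ is independent of $\y$; this is again $\y$-independent, and, since $S$ (hence $\tilde S(y)$) is homogeneous so that $h$-rescaling commutes with composition by $S$, one gets $Op^W(\tilde a^h\circ S^{-1})=\int^\oplus \psi(y)\,Op^W_x(a^h\circ\tilde S(y)^{-1})\,dy$. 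As $U=\int^\oplus\tilde U(y)\,dy$ and the scalar $\psi(y)$ commutes with everything in the fibre, subtracting gives $Ad_S(\tilde a^h)U=\int^\oplus \psi(y)\,Ad_{\tilde S(y)}(a^h)\,\tilde U(y)\,dy$. In particular $Ad_S(\tilde a^h)U$ again commutes with multiplication by functions of $y$, so the identity iterates:
\[
Ad_S(\tilde a_1^h)\cdots Ad_S(\tilde a_N^h)\,U=\int^\oplus \psi(y)^N\,Ad_{\tilde S(y)}(a_1^h)\cdots Ad_{\tilde S(y)}(a_N^h)\,\tilde U(y)\,dy .
\]

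I would then conclude as follows. Because $U$ is an FIO associated to $S$ on $\{\x\neq0\}$ and each $\tilde a_j$ is supported there, the left-hand side has $B(L^2(\re^m))$-norm at most $C_N h^N$. Since the operator norm of a direct integral equals the essential supremum of the fibre norms and $\psi\equiv1$ near $y_0$, this forces $\bignorm{Ad_{\tilde S(y)}(a_1^h)\cdots Ad_{\tilde S(y)}(a_N^h)\tilde U(y)}_{B(L^2)}\le C_N h^N$ for a.e.\ $y$ near $y_0$, hence for a.e.\ $y$. Theorem~B.1 on $T^*\re_x^n$ (conic version) then says $\tilde U(y)$ is an FIO of order $0$ associated to $\tilde S(y)$ for a.e.\ $y$, which is the claim.

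The step I expect to require the most care is the fibrewise identity for $Ad_S$: one must check that the triangular structure of $S$ together with the $\y$-independence of the lifted symbols really makes $Op^W(\tilde a^h)$ and $Op^W(\tilde a^h\circ S^{-1})$ decompose as direct integrals with fibres exactly $\psi(y)Op^W_x(a^h)$ and $\psi(y)Op^W_x(a^h\circ\tilde S(y)^{-1})$, and that the semiclassical rescaling $a\mapsto a^h$ commutes with $a\mapsto a\circ S^{-1}$ by homogeneity; once this bookkeeping is in place the rest is routine. A minor point, already noted, is reading ``for each $y$'' as ``for a.e.\ $y$'', which is harmless since $\tilde U(y)$ is defined only up to null sets.
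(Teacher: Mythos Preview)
Your approach differs from the paper's and is conceptually more direct, but it has a real gap at precisely the step you flag as delicate. The lifted symbol $\tilde a(x,\x,y,\y)=a(x,\x)\psi(y)$ is \emph{not} in $C_0^\infty(T^*\re^m)$: it does not depend on $\y$, so its support is non-compact. Both Theorem~B.1 and the Egorov-type converse you invoke are stated for $a_j\in C_0^\infty(T^*\re^m\setminus 0)$, so you cannot directly conclude $\|Ad_S(\tilde a_1^h)\cdots Ad_S(\tilde a_N^h)U\|\le C_Nh^N$ from ``$U$ is an FIO'' for your $\tilde a_j$. Extending the converse to symbols that are merely bounded with bounded derivatives is not automatic here: a cutoff $\chi(\y/R)$ destroys the $\y$-independence (hence the direct-integral decomposition) and makes $(\tilde a\chi)\circ S^{-1}$ depend on $\y$ through the additive shift $g$, so one cannot simply pass to the limit $R\to\infty$ inside your identity.

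The paper addresses exactly this point by a different mechanism. It tests $U$ against genuinely compactly supported symbols $a_z=a(x,\x)\f(y)\g(\y-z)$, for which the FIO hypothesis applies without caveat, obtaining $UA_z=B_zU+O(h^\infty)$ with principal symbol $a_z\circ S^{-1}$. It then uses that $U$ commutes with $e^{iy\cdot z}$ (translations in $\y$) to get $b_z(\cdot,\y)=b_0(\cdot,\y-z)$, and integrates in $z$: $\int A_z\,dz\to a(x,hD_x)\f(y)$ and $\int B_z\,dz\to\tilde b(x,hD_x,y)$ strongly, yielding the $\y$-independent relation $\tilde U(y)\,a(x,hD_x)\f(y)=\tilde b(x,hD_x,y)\,\tilde U(y)+O(h^\infty)$ with $\tilde b-(a\circ\tilde S(y)^{-1})\f=O(h)$, after which Corollary~B.2 concludes. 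The integration over $z$ is precisely the bridge from compactly supported test symbols (where the FIO hypothesis bites) to $\y$-independent ones (where the fibrewise conclusion lives); your argument attempts to stand at the endpoint of that bridge without crossing it.
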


\begin{rem}
The assumption on $S$ actually follows from the properties of $U$. 
We include it to introduce the notations. 
\end{rem}

\begin{proof}
Let $a\in C_0^\infty(T^*\re^n\setminus 0)$, and let $\f, \g\in C_0^\infty(\re^k)$ such that 
$\f, \g\geq 0$ and $\int \g(\y)d\y =1$. We also denote $\g_z(\y)=\g(\y-z)$ for $z\in\re^k$. 
We consider 
\[
A_z = a_z(x,hD_x,y,hD_y)= a(x,hD_x)\f(y)\g_z(hD_y).
\]
Since $U$ is an FIO, there is $b_z$, which is bounded in $C_0^\infty(T^*\re^m)$ 
uniformly in $h\in (0,1]$,  such that 
\[
UA_z =B_z U +O(h^\infty), \quad B_z=b_z(x,hD_x,y,h D_y)
\]
with the principal symbol: 
\[
a_z\circ S^{-1} = (a\circ \tilde S(y)^{-1})(x,\x)\f(y)\g\bigpare{\y-g(\tilde S(y)^{-1}(x,\x),y)-z}.
\] 
Since $U$ commutes with $\bigset{e^{iy\cdot z}}{z\in\re^k}$, i.e., the translations 
in $\y$-variables, we learn 
\[
b_z(x,\x,y,\y)=b_0(x,\x,y,\y-z),
\]
and the remainder term also satisfies this property. Moreover, 
these symbols decays rapidly outside $S [\supp a_z]$. 

On the other hand, it is easy to see 
\[
\int_{|z|\leq R} A_z dz \to a(x,hD_x)\f(y), \quad 
\int_{|z|\leq R} B_z dz \to \tilde b(x,hD_x,y)
\]
strongly as $R\to\infty$, where 
$\tilde b(x,\x,y)= \int_{\re^k} b_0(x,\x,y,\y)d\y$. The principal symbol of 
$\tilde b$ is given by $(a\circ \tilde S(y)^{-1})(x,\x)\f(y)$. These imply 
\[
\tilde U(y) a(x,hD_x) \f(y)= \tilde b(x,hD_x,y)\tilde U(y) +O(h^\infty), 
\]
where $\tilde b(x,\x,y)-(a\circ \tilde S(y)^{-1})(x,\x)\f(y) =O(h)$. 
Since $\f\in C_0^\infty(\re^k)$ is arbitrary, for a fixed $y\in\re^k$ 
we may replace $\f(y)$ by $1$, and we learn $\tilde U(y)$ is an FIO of order 0
associated to $\tilde S(y)$ by Corollary~B.2
\end{proof}


\end{document}